\newtheorem{thm}{Theorem}[section]
\newtheorem{cor}[thm]{Corollary}
\newtheorem{lem}[thm]{Lemma}
\newtheorem{rem}[thm]{Remark}
\newcommand{\beq}{\begin{equation}}
\newcommand{\eeq}{\end{equation}}
\newcommand{\beqa}{\begin{eqnarray}}
\newcommand{\eeqa}{\end{eqnarray}}
\newcommand{\bit}{\begin{itemize}}
\newcommand{\eit}{\end{itemize}}
\newcommand{\bedef}{\begin{defn}}
\newcommand{\edefn}{\end{defn}}
\newcommand{\bpro}{\begin{prop}}
\newcommand{\epro}{\end{prop}}
\newcommand{\df}{\partial}
\newcommand{\bx}{{\bf x}}
\newcommand{\bE}{{\bf E}}
\newcommand{\bB}{{\bf B}}
\newcommand{\bJ}{{\bf J}}
\newcommand{\bU}{{\bf U}}
\newcommand{\bV}{{\bf V}}
\newcommand{\bn}{{\bf n}}
\newcommand{\mE}{{\mathcal E}}
\newcommand{\mT}{{\mathcal T}}
\newcommand{\mG}{{\mathcal G}}
\newcommand{\mU}{{\mathcal U}}
\newcommand{\Ox}{{\Omega_x}}
\newcommand{\Oxi}{{\Omega_\xi}}
\newcommand{\Kx}{{K_x}}
\newcommand{\Kxi}{{K_\xi}}
\newcommand{\veps}{{\varepsilon}}
\newcommand{\bveps}{{\mbox{\boldmath{$\varepsilon$}}}}
\newcommand{\bzeta}{{\mbox{\boldmath{$\zeta$}}}}
\newcommand{\bPi}{{\mbox{\boldmath{$\Pi$}}}}
\title
{Discontinuous Galerkin Methods for the Vlasov-Maxwell Equations}
\author{Yingda Cheng\thanks{Department of Mathematics, Michigan State University,
East Lansing, MI 48824 U.S.A. {\tt ycheng@math.msu.edu}}
  \and
  Irene M. Gamba\thanks{Department of Mathematics and ICES, University of Texas at Austin,
Austin, TX 78712 U.S.A.
 {\tt gamba@math.utexas.edu}}
   \and
 Fengyan Li\thanks{Department of Mathematical Sciences, Rensselaer Polytechnic Institute, Troy, NY 12180, U.S.A. 
 {\tt lif@rpi.edu}}
 \and
 Philip J. Morrison\thanks{Department of Physics and Institute for Fusion Studies, University of Texas at
Austin, Austin, TX 78712 U.S.A.
 {\tt morrison@physics.utexas.edu}}}
\date{\today}
\begin{document}

%======================================

\maketitle

\begin{abstract}
Discontinuous Galerkin methods are developed for solving  the Vlasov-Maxwell system, methods that are designed to be systematically as accurate as one wants with provable conservation of mass and possibly  total energy.  Such properties in general are hard to achieve within other numerical method frameworks for simulating the Vlasov-Maxwell system. The proposed scheme employs discontinuous Galerkin discretizations for both the Vlasov and the Maxwell  equations, resulting in a consistent description of the distribution  function and electromagnetic fields.  It is proven, up to some boundary effects, that  charge is conserved and the total energy can be preserved with suitable choices of the numerical flux for the Maxwell  equations and the underlying approximation spaces.  Error estimates are  established for  several flux choices.  The scheme is tested on the streaming Weibel instability:   the  order of accuracy  and  conservation properties of the proposed method are verified.
\end{abstract}

\begin{keywords}
 Vlasov-Maxwell system, discontinuous Galerkin methods, energy conservation, error estimates, Weibel instability
 \end{keywords}

\begin{AMS}
65M60,  74S05
\end{AMS}

%%%%%%%%%%%%%%%%%%%%%%
% Governing Equations %
%%%%%%%%%%%%%%%%%%%%%%

\section{Introduction}
In this paper, we consider the Vlasov-Maxwell (VM) system, the most important  equation for the  modeling of collisionless magnetized plasmas.  In particular, we study the evolution of a single species of nonrelativistic electrons under the self-consistent electromagnetic field while the ions are treated as uniform fixed background. Under the scaling of the characteristic time by the inverse of the plasma frequency $\omega_p^{-1}$, length by the Debye length $\lambda_D$, and  electric and magnetic fields by  $-m c \omega_p/e$ (with $m$ the electron mass, $c$ the speed of light, and $e$ the electron charge),  the  dimensionless form of the VM system is  
\begin{subequations}
\begin{align}
\partial_t f & + \xi \cdot \nabla_{\bx} f  +  (\bE + \xi \times \bB) \cdot \nabla_\xi f = 0~, \label{eq:vlasov}\\
\frac{\df \bE}{\df t}& = \nabla_{\bx}  \times \bB -  \bJ, \qquad
\frac{\df \bB}{\df t} = -  \nabla_{\bx}   \times \bE~,  \quad\label{eq:max:2} \\
\nabla_\bx \cdot \bE &= \rho-\rho_i, \qquad
\nabla_\bx \cdot \bB = 0~, \quad
\label{eq:max:4}
\end{align}
\end{subequations}
with
\begin{equation*}
\rho(\bx, t)= \int_\Oxi f(\bx, \xi, t)d\xi,\qquad \bJ(\bx, t)=  \int_\Oxi f(\bx, \xi, t)\xi d\xi~, 
\end{equation*}
where the equations are defined on  $\Omega=\Ox\times\Oxi$,  $\mathbf{x} \in \Ox$ denotes position in  physical space, and $\xi \in \Oxi$  in  velocity space.  Here 
$f(\bx, \xi, t)\geq 0$ is the distribution function of electrons at position $\bx$ with velocity $\xi$ at time $t$,  $\bE(\bx, t)$ is the electric field, $\bB(\bx, t)$ is the magnetic field, $\rho(\bx, t)$ is the electron charge density, and $\bJ(\bx, t)$ is the current density.  The charge  density of background ions is denoted by $\rho_i$, which is chosen to satisfy total charge neutrality,  $\int_\Ox \left(\rho(\bx, t)-\rho_i \right)\,d\bx=0$.       {Periodic boundary conditions in ${\bf x}$-space are assumed and the initial conditions are denoted by  $f_0=f(\bx, \xi, 0)$, $\bE_0=\bE(\bx,0)$ and $\bB_0=\bB(\bx,0)$.   We also assume that the initial distribution function $f_0(\bx,\xi) \in H^m(\Omega) \cap L^1_2(\Oxi)$, i.e., is in a Sobolev space of order $m$ and  is integrable  with finite energy in $\xi$-space, where $L^{p}_{m}(\Oxi) \equiv \{ \psi: \int_{\Oxi} |\psi|^p(1+
|\xi|^2)^{m/2} \,d\xi < \infty \}$.   The initial fields $\bE_0(\bx)$ and  $\bB_0(\bx)$ are also assumed to be in $H^m(\Omega_x)$.}

The VM system has wide importance  in plasma physics for describing  space and laboratory plasmas, with application to fusion devices,  high-power microwave generators,  and  large scale particle accelerators. The computation of the initial boundary value problem associated to the VM system is quite challenging,  due to the high-dimensionality (6D+time) of the Vlasov equation, multiple temporal and spatial scales associated with various physical phenomena, nonlinearity,  and the conservation of physical quantities due to the Hamiltonian structure \cite{morrison_80, morrison_13} of the system. Particle-in-cell (PIC) methods  \cite{Birdsall_book1991, Hockney_book1981} have long been very popular  numerical tools, in which the particles are advanced in a Lagrangian framework, while the field equations are solved on a mesh.  This remains an active area of research \cite{shadwick}.  In recent years, there has been growing interest in computing the Vlasov equation in a deterministi
 c framework. In the context of 
the Vlasov-Poisson system,  semi-Lagrangian methods  \cite{chengknorr_76, Sonnendrucker_1999}, finite volume (flux balance) methods \cite{Boris_1976,Fijalkow_1999, 
Filbet_PFC_2001}, Fourier-Fourier spectral methods \cite{Klimas_1987, Klimas_1994}, and continuous finite element methods \cite{Zaki_1988_1, Zaki_1988_2} have been  proposed, among many others. In the context of VM simulations, Califano \emph{et al.} have    used a semi-Lagrangian approach to compute the  streaming Weibel (SW) instability \cite{califano1998ksw}, current filamentation instability \cite{mangeney2002nsi}, magnetic vortices \cite{califano1965ikp}, magnetic reconnection \cite{califano2001ffm}. Also, various methods have  been  proposed for the relativistic VM system \cite{Sircombe20094773, Besse20087889, Suzuki20101643, Huot2003512}.

In this paper, we propose the use  of discontinuous Galerkin (DG) methods for solving  the VM system. 
What motivates us to choose DG methods, besides their many widely recognized desirable properties, is that they can be designed systematically to be as accurate as one wants, meanwhile with provable conservation of mass and possibly also the total energy. This is in general hard to achieve within other numerical method frameworks for simulating the VM system. The proposed scheme employs DG discretizations for both the Vlasov and the Maxwell equations, resulting in a consistent description of the distribution  function and electromagnetic fields.
We will show that up to some boundary effects, depending on the size of the computational domain, the total charge (mass)  is conserved and the total energy can be preserved with  a suitable choice of the numerical flux for the Maxwell  equations and   underlying approximation spaces. Error estimates are further established for  several flux choices.
%We will show that using this description the total particle numbers are conserved, and the total energy could be preserved upon a suitable choice of numerical flux for the Maxwell's equations and the underlying polynomial spaces on the semi-discrete level, if boundary effects can be neglected.
The DG scheme can be  implemented on both structured and unstructured meshes with provable accuracy and stability for many linear and nonlinear problems, it is advantageous in long time wave-like simulations because it has low dispersive and dissipative errors \cite{Ainsworth:2004}, and it is very suitable for adaptive and parallel implementations. 
 The original DG method  was introduced by Reed and Hill \cite{Reed_hill_73} for a neutron transport equation.  Lesaint and Raviart \cite{Lesaint_r_74} performed the first error estimates for the original DG method, while  Cockburn and Shu in a series of papers
\cite{Cockburn_1991_MMNA_RK, Cockburn_1989_MC_RK_DG,Cockburn_1989_JCP_RK_localDG, Cockburn_1990_MC_RK_DG,Cockburn_1998_JCP_RK} developed the Runge-Kutta DG (RKDG) methods for hyperbolic equations. RKDG methods have  been used to simulate the Vlasov-Poisson system in plasmas \cite{Heath, Heath_thesis, Cheng_Gamba_Morrison} and for a  gravitational infinite homogeneous stellar system \cite{Cheng_jeans}. Some theoretical aspects about stability, accuracy and conservation of these methods in their semi-discrete form are discussed in \cite{Heath_thesis, Ayuso2009, Ayuso2010}.
Recently,  semi-Lagrangian DG methods \cite{rossmanith_11, qiu_ppdg_11} were proposed for the Vlasov-Poisson system.  In \cite{Jacobs_Hesthaven_2006, Jacobs_Hesthaven_2009}, DG discretizations for  Maxwell's equations were coupled with PIC methods to solve the VM system. In a recent work \cite{heyang},  error estimates of fully discrete RKDG methods are studied for the VM system.

The rest of the paper is  organized as follows: in Section \ref{nummeth}, we describe the numerical algorithm. In Section \ref{conserve},   conservation and the stability are established for the method. In Section \ref{error}, we provide the error estimates of the scheme. Section \ref{numres} is devoted to discussion of simulation results. We conclude with a few remarks in Section \ref{conclu}.

\section{Numerical Methods}
\label{nummeth}

In this section, we will introduce the DG algorithm for the VM system.   We consider  an infinite, homogeneous plasma, where all boundary conditions in $\bx$ are  periodic, {and $f(\mathbf{x},\xi,t)$ is assumed to be compactly supported in $\xi$. This assumption is consistent with the fact that the solution of the VM system is expected to decay at infinity in $\xi$-space, preserving integrability and its kinetic energy.}

Without loss of generality, we assume  $\Ox=(-L_x, L_x]^{d_x}$ and $\Oxi=[-L_\xi, L_\xi]^{d_\xi}$, where the velocity space domain $\Oxi$ is chosen large enough so that $f=0$ at and near the phase space boundaries. We take $d_x=d_\xi=3$ in the following sections, although the method and its analysis can be extended directly to the cases when $d_x$ and $d_\xi$ take any values from $\{1, 2, 3\}$.

In our analysis,  the assumption that  $f(\bx, \xi, t)$ remain  compactly  support in $\xi$,  given that it is initially so,  is an open question in the general setting.  The answer to this question is important for proving  the existence of a globally defined  classical solution, and its failure  could indicate the formation of  shock-like solutions  of the VM system.  Whether or not the three-dimensional VM system is globally well-posed as a Cauchy problem is a major open problem.  The limited results of global existence without uniqueness of weak solutions and  well-posedness and regularity of solutions assuming either some symmetry or near  neutrality constitute the present  extent of  knowledge  \cite{GlasseyARMA86,GlasseyCMP87, GlasseyCMP88, DipernaCPAM89,GlasseyCMP97, GlasseyARMA98II,GlasseyARMA98I}.

\subsection{Notations}

Throughout the paper,   standard notations will be used for the Sobolev spaces. Given a bounded domain $D\in {\mathbb R}^\star$ (with $\star=d_x, d_\xi$, or $ d_x+d_\xi$) and any nonnegative integer $m$, $H^m(D)$ denotes the $L^2$-Sobolev space of order $m$ with the standard Sobolev norm $||\cdot||_{m,D}$, and $W^{m,\infty}(D)$ denotes the $L^\infty$-Sobolev space of order $m$ with the standard Sobolev norm $||\cdot||_{m,\infty,D}$ and the semi-norm $|\cdot|_{m,\infty,D}$. When $m=0$, we also use $H^0(D)=L^2(D)$ and $W^{0,\infty}(D)=L^\infty(D)$.

Let $\mT_h^x=\{\Kx\}$ and $\mT_h^\xi=\{\Kxi\}$ be  partitions of $\Ox$ and $\Oxi$, respectively,  with $\Kx$ and $\Kxi$ being (rotated) Cartesian elements or simplices;  then $\mT_h=\{K: K=\Kx\times\Kxi, \forall \Kx\in\mT_h^x, \forall \Kxi\in\mT_h^\xi\}$ defines a partition of $\Omega$.
%We assume $\{\mT_h^x\}_h$, $\{\mT_h^\xi\}_h$ and $\{\mT_h\}_h$ are shape regular (note: reference).
Let $\mE_x$ be the set of the edges of $\mT_h^x$ and  $\mE_\xi$ be the set of the edges of $\mT_h^\xi$;  then the edges of $\mT_h$ will be $\mE=\{\Kx\times e_\xi: \forall\Kx\in\mT_h^x, \forall e_\xi\in\mE_\xi\}\cup \{e_x\times\Kxi: \forall e_x\in\mE_x, \forall \Kxi\in\mT_h^\xi\}$. Here we take into account the periodic boundary condition in the $\bx$-direction when defining $\mE_x$ and $\mE$. Furthermore, $\mE_\xi=\mE_\xi^i\cup\mE_\xi^b$ with $\mE_\xi^i$ and $\mE_\xi^b$ being the set of interior and boundary edges of $\mT_h^\xi$, respectively.
In addition, we denote the mesh size of $\mT_h$ as $h=\textrm{max}(h_x, h_\xi)=\max_{K\in\mT_h}h_K$, where
$h_x=\max_{\Kx\in\mT_h^x} h_\Kx$ with $h_\Kx=\textrm{diam}(\Kx)$, $h_\xi=\max_{\Kxi\in\mT_h^\xi} h_\Kxi$ with $h_\Kxi=\textrm{diam}(\Kxi)$, and $h_K=\max(h_\Kx, h_\Kxi)$ for $K=\Kx\times\Kxi$.
When the mesh is refined, we assume both $ \frac{h_x}{h_{\xi,\min}}$ and $\frac{h_\xi}{h_{x,\min}}$ are uniformly bounded from above by a positive constant $\sigma_0$. Here $h_{x,\min}=\min_{\Kx\in\mT_h^x} h_\Kx$ and $h_{\xi,\min}=\min_{\Kxi\in\mT_h^\xi} h_\Kxi$. It is further assumed that $\{\mT_h^\star\}_h$ is shape-regular with $\star=x$ or $\xi$. That is, if $\rho_{K_\star}$ denotes the diameter of the largest sphere included in $K_\star$, there is
\begin{equation*}
\frac{h_{K_\star}}{\rho_{K_\star}}\leq \sigma_\star,\qquad \forall K_\star\in\mT_h^\star
\end{equation*}
for a positive constant $\sigma_\star$ independent of $h_\star$.

Next we define the discrete spaces
\begin{subequations}
\begin{align}
\mG_h^{k}&=\left\{g\in L^2(\Omega): g|_{K=\Kx\times\Kxi}\in P^k(\Kx\times\Kxi), \forall \Kx\in\mT_h^x, \forall \Kxi\in\mT_h^\xi \right\}~,\label{eq:sp:f}\\
              &=\left\{g\in L^2(\Omega): g|_K\in P^k(K), \forall K\in\mT_h \right\}~,\notag\\
%\mG_h^{k,(2)}&=\left\{g\in L^2(\Omega): g|_{K=\Kx\times\Kxi}\in P^k(\Kx)\times P^k(\Kxi), \forall \Kx\in\mT_h^x, \forall \Kxi\in\mT_h^\xi \right\}~,\\
\mU_h^r&=\left\{\bU\in [L^2(\Omega_x)]^{d_x}: \bU|_\Kx\in [P^r(\Kx)]^{d_x}, \forall \Kx\in\mT_h^x \right\}~,
\end{align}
\end{subequations}
where $P^r(D)$ denotes the set of polynomials of  total degree at most $r$ on $D$, and $k$ and $r$ are nonnegative integers.
Note the space $\mG_h^{k}$, which  we use to approximate $f$,  is called P-type, and it can be replaced by the tensor product of P-type spaces in $\bx$ and $\xi$,
\beq
\left\{g\in L^2(\Omega): g|_{K=\Kx\times\Kxi}\in P^k(\Kx)\times P^k(\Kxi), \forall \Kx\in\mT_h^x, \forall \Kxi\in\mT_h^\xi \right\}~,
\label{eq:sp:f:1}
\eeq
or by the tensor product space in each variable, which is called  Q-type 
\beq
\left\{g\in L^2(\Omega): g|_{K=\Kx\times\Kxi}\in Q^k(\Kx)\times Q^k(\Kxi), \forall \Kx\in\mT_h^x, \forall \Kxi\in\mT_h^\xi \right\}~.\label{eq:sp:f:3}\
\eeq
{Here $Q^r(D)$ denotes the set of polynomials of    degree at most $r$ in each variable on $D$. The numerical methods formulated in this paper, as well as the conservation, stability, and error estimates,  hold when any of the spaces above is used to approximate $f$. In our simulations of Section \ref{numres}, we use the P-type of  \eqref{eq:sp:f} as it is the smallest and therefore renders the most cost efficient algorithm. In fact,  the ratios of these three spaces defined in \eqref{eq:sp:f}, \eqref{eq:sp:f:1} and \eqref{eq:sp:f:3} are $\sum_{n=0}^k {{n+2d-1}\choose {2d-1}}: (\sum_{n=0}^k {{n+d-1}\choose {d-1}})^2: (k+1)^{2d}$ with $d_x=d_\xi=d$.

%The P-type discrete spaces can also be used on  more general meshes.

%From now on, we will use  $\mG_h^k$ to denote either $\mG_h^{k,(1)}$ or  $\mG_h^{k,(2)}$.
For piecewise  functions defined with respect to $\mT_h^x$ or $\mT_h^\xi$, we further introduce the jumps and averages as follows. For any edge $e=\{K_x^+\cap K_x^-\}\in\mE_x$, with $\bn_x^\pm$ as the outward unit normal to $\partial K_x^\pm$,
$g^\pm=g|_{K_x^\pm}$, and $\bU^\pm=\bU|_{K_x^\pm}$, the jumps across $e$ are defined  as
\begin{equation*}
[g]_x={g^+}{\bn_x^+}+{g^-}{\bn_x^-},\qquad [\bU]_x={\bU^+}\cdot{\bn_x^+}+{\bU^-}\cdot{\bn_x^-},\qquad [\bU]_\tau=\bU^+\times\bn_x^++\bU^-\times\bn_x^-
\end{equation*}
and the averages are
\begin{equation*}
\{g\}_x=\frac{1}{2}({g^+}+{g^-}),\qquad \{\bU\}_x=\frac{1}{2}({\bU^+}+{\bU^-}).
\end{equation*}

By replacing the subscript $x$ with $\xi$, one can define $[g]_\xi$, $[\bU]_\xi$, $\{g\}_\xi$, and $\{\bU\}_\xi$ for an interior edge of $\mT_h^\xi$ in $\mE^i_\xi$. For a boundary edge $e\in\mE^b_\xi$ with $\bn_\xi$ being the outward unit normal, we use
\begin{equation}
[g]_\xi={g}{\bn_\xi},\qquad
\{g\}_\xi=\frac{1}{2}g,\qquad \{\bU\}_\xi=\frac{1}{2}{\bU}~.
\label{eq:jump:ave:b}
\end{equation}
This is consistent with the fact that the exact solution $f$ is compactly supported in $\xi$.

For convenience, we introduce some shorthand notations,
$
\int_{\Omega_\star}=\int_{\mT_h^\star}=\sum_{K_\star\in\mT_h^\star}\int_{K_\star}, \qquad \int_{\Omega}=\int_{\mT_h}=\sum_{K\in\mT_h}\int_{K},\qquad \int_{\mE_\star}=\sum_{e\in\mE_\star}\int_{e}~,\notag
$
where again  $\star$ is $x$ or $\xi$.
%, and one can also replace $\mE_\xi$ by $\mE_\xi^i$.
%And with the tensor product structure of the mesh element, there is also
%\begin{equation*}
%\int_{\mT_h}=\int_{\mT_h^x}\int_{\mT_h^\xi}=\int_{\mT_h^\xi}\int_{\mT_h^x}~.
%\end{equation*}
%
 In addition,
$||g||_{0,\mE}=(||g||^2_{0,\mE_x\times\mT_h^\xi}+||g||^2_{0,\mT_h^x\times\mE_\xi})^{1/2}$ with
$
||g||_{0,\mE_x\times\mT_h^\xi}=\left(\int_{\mE_x}\int_{\mT^\xi_h} g^2 d\xi ds_x\right)^{1/2}, \qquad ||g||_{0,\mT_h^x\times\mE_\xi}=\left(\int_{\mT^x_h}\int_{\mE_\xi} g^2 ds_\xi d\bx\right)^{1/2}~,
$
and $||g||_{0,\mE_x}=\left(\int_{\mE_x} g^2 ds_x\right)^{1/2}$.
There are several equalities that will be used later, which  can be easily verified using  the definitions of averages and jumps.
\begin{subequations}
\beq
\frac{1}{2}[g^2]_\star =\{g\}_\star[g]_\star, \;\textrm{with}\;\star=x \;\textrm{or}\; \xi~,\label{eq:equality:0}
\eeq
\beq
[\bU\times\bV]_x+\{\bV\}_x\cdot [\bU]_\tau-\{\bU\}_x\cdot [\bV]_\tau =0~,\label{eq:equality:1}
\eeq
\beq
[\bU\times\bV]_x+{\bV^+}\cdot [\bU]_\tau-{\bU^-}\cdot [\bV]_\tau =0,\qquad
[\bU\times\bV]_x+{\bV^-}\cdot [\bU]_\tau-{\bU^+}\cdot [\bV]_\tau =0~.\label{eq:equality:2}
\eeq
\end{subequations}
%and \eqref{eq:equality:0} with $\star=\xi$ is defined for an interior edge in $\mE_\xi^i$.

We end this subsection by  summarizing  some standard approximation properties of the above discrete spaces, as well as some inverse inequalities  \cite{Ciarlet:book}.  For any nonnegative integer $m$, let $\Pi^m$ be the $L^2$ projection onto $\mG^m_h$, and  $\bPi_x^m$ be the $L^2$ projection onto $\mU^m_h$,   then
%%%%%%%%%%%
% Lemma
%%%%%%%%%%%
\begin{lem}[{Approximation properties}]
\label{lem:appr}
There exists a constant $C>0$, such that  for any $g\in H^{m+1}(\Omega)$ and  $\bU\in [H^{m+1}(\Ox)]^{d_x}$, the following hold: 
\begin{align*}
||g-\Pi^m g||_{0,K}+h_K^{1/2}||g-\Pi^m g||_{0,\partial K}&\leq C h_K^{m+1}||g||_{m+1,K},\qquad \forall K\in\mT_h~,\\
||\bU-\bPi_x^m \bU||_{0,\Kx}+h_\Kx^{1/2}||\bU-\bPi_x^m \bU||_{0,\partial\Kx}&\leq C h_\Kx^{m+1}||\bU||_{m+1,\Kx}, \qquad \forall \Kx\in\mT_h^x~,\\
||\bU-\bPi_x^m \bU||_{0,\infty,\Kx}&\leq C h_\Kx^{m+1}||\bU||_{m+1,\infty,\Kx},\qquad \forall \Kx\in\mT_h^x~, 
\end{align*}
where the constant $C$ is independent of the mesh sizes $h_K$ and $h_\Kx$, but  depends on $m$ and the shape regularity parameters $\sigma_x$ and $\sigma_\xi$ of the mesh.
\end{lem}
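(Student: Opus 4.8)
The plan is to reduce each of the three inequalities to a single fixed reference element by an affine change of variables, apply the Bramble--Hilbert lemma there, and transfer the estimates back by scaling, the only geometric input being the shape-regularity of $\mT_h^x$ and $\mT_h^\xi$; together with the assumed boundedness of $h_x/h_{\xi,\min}$ and $h_\xi/h_{x,\min}$ this makes every $K=\Kx\times\Kxi$ behave like an isotropic element of diameter $h_K$, since then $h_\Kx\sim h_\Kxi\sim h_K$. First I would fix, for a generic $K=\Kx\times\Kxi\in\mT_h$, an affine bijection $F_K\colon\hat K\to K$ from a reference box or simplex; because $\Kx,\Kxi$ are (rotated) Cartesian cells or simplices, $F_K$ can be taken block-diagonal, $F_K(\hat\bx,\hat\xi)=(B_x\hat\bx+b_x,\,B_\xi\hat\xi+b_\xi)$, with $\|B_x\|\lesssim h_\Kx$, $\|B_\xi\|\lesssim h_\Kxi$ and, by shape-regularity, $\|B_x^{-1}\|\lesssim h_\Kx^{-1}$, $\|B_\xi^{-1}\|\lesssim h_\Kxi^{-1}$, $|\det B_x|\sim h_\Kx^{d_x}$, $|\det B_\xi|\sim h_\Kxi^{d_\xi}$. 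The crucial point is that $P^m$ is invariant under affine pullback, so if $\hat\Pi^m$ denotes the $L^2(\hat K)$-projection onto $P^m(\hat K)$ and $\Phi g:=g\circ F_K$, then $\Phi\,\Pi^m g=\hat\Pi^m(\Phi g)$; hence, writing $v:=g-\Pi^m g$ and $\hat g:=\Phi g$, the pullback $\Phi v$ is exactly the reference projection error $\hat v:=\hat g-\hat\Pi^m\hat g$. The same holds componentwise for $\bPi_x^m$ and, as remarked in the text, for the spaces \eqref{eq:sp:f:1} and \eqref{eq:sp:f:3}, which are likewise invariant under the block-diagonal pullback and contain $P^m$, so only the reference-element constant changes.

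On $\hat K$ the operator $\hat\Pi^m$ reproduces $P^m(\hat K)$ and is an $L^2(\hat K)$-contraction. Writing $\hat v=(I-\hat\Pi^m)(\hat g-p)$ where $p$ is an averaged Taylor polynomial of $\hat g$ (which lies in $P^m(\hat K)$), the Bramble--Hilbert lemma gives $\|\hat v\|_{0,\hat K}\le 2\|\hat g-p\|_{0,\hat K}\le C|\hat g|_{m+1,\hat K}$, where $|\cdot|_{m+1,\hat K}$ is the $H^{m+1}$-seminorm. For the boundary term I would combine the reference trace inequality $\|\hat v\|_{0,\partial\hat K}\le C\|\hat v\|_{1,\hat K}$ with a bound on $|\hat v|_{1,\hat K}$: since $|\hat\Pi^m(\hat g-p)|_{1,\hat K}\le C\|\hat\Pi^m(\hat g-p)\|_{0,\hat K}\le C\|\hat g-p\|_{0,\hat K}$ by equivalence of norms on the finite-dimensional space $P^m(\hat K)$, one gets $|\hat v|_{1,\hat K}\le|\hat g-p|_{1,\hat K}+|\hat\Pi^m(\hat g-p)|_{1,\hat K}\le C|\hat g|_{m+1,\hat K}$, hence $\|\hat v\|_{0,\partial\hat K}\le C|\hat g|_{m+1,\hat K}$. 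The $L^\infty$ estimate is structurally identical: $\hat\Pi^m$ is bounded on $L^\infty(\hat K)$ (finite dimensions, $|\hat K|<\infty$) and $\|\hat U-p\|_{0,\infty,\hat K}\le C|\hat U|_{m+1,\infty,\hat K}$ for the averaged Taylor polynomial, so $\|\hat U-\hat\Pi^m\hat U\|_{0,\infty,\hat K}\le C|\hat U|_{m+1,\infty,\hat K}$.

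Finally I would scale back, with $d:=d_x+d_\xi$ for $K\in\mT_h$ and $d:=d_x$ for $\Kx\in\mT_h^x$. The change of variables $F_K$ gives $\|v\|_{0,K}=|\det B_K|^{1/2}\|\hat v\|_{0,\hat K}$, $\|v\|_{0,\partial K}^2\sim h_K^{d-1}\|\hat v\|_{0,\partial\hat K}^2$ (the affine surface Jacobian of each face is $\sim h_K^{d-1}$ and there are boundedly many faces, using $h_\Kx\sim h_\Kxi\sim h_K$), and $\|v\|_{0,\infty,K}=\|\hat v\|_{0,\infty,\hat K}$; on the other hand the chain rule yields $|\hat g|_{m+1,\hat K}\lesssim\|B_K\|^{m+1}|\det B_K|^{-1/2}|g|_{m+1,K}\lesssim h_K^{m+1}|\det B_K|^{-1/2}|g|_{m+1,K}$ and $|\hat U|_{m+1,\infty,\hat K}\lesssim h_\Kx^{m+1}|U|_{m+1,\infty,\Kx}$. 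Substituting, the $L^2$-volume bound is immediate since the factors $|\det B_K|$ cancel, the $L^\infty$ bound is immediate, and for the boundary bound $h_K^{1/2}\|v\|_{0,\partial K}\lesssim h_K^{d/2}\|\hat v\|_{0,\partial\hat K}\lesssim h_K^{d/2}h_K^{m+1}|\det B_K|^{-1/2}|g|_{m+1,K}\sim h_K^{m+1}|g|_{m+1,K}$ because $|\det B_K|\sim h_K^{d}$. Bounding the seminorms by the full Sobolev norms then gives the stated inequalities, with $C$ depending only on $m$, $\sigma_x$, $\sigma_\xi$ (and, for the boundary term, the comparability constant $\sigma_0$). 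I expect the only real obstacle to be the boundary/trace estimate: one must both produce the $H^{m+1}$-seminorm control of $|\hat v|_{1,\hat K}$ on the reference element and keep careful track of the anisotropic product geometry of $K=\Kx\times\Kxi$ when rescaling the surface measure, whereas the interior $L^2$ and $L^\infty$ estimates reduce to a purely routine scaling computation once the reference-element Bramble--Hilbert bound is in hand.
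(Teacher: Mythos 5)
The paper does not prove this lemma at all: it is quoted as a standard result with a citation to Ciarlet's book, and your scaling-plus-Bramble--Hilbert argument is precisely the canonical proof from that reference, carried out correctly (including the commutation of the $L^2$ projection with the block-diagonal affine pullback and the trace inequality on the reference element). Your one genuine addition is the careful treatment of the product element $K=\Kx\times\Kxi$ in the boundary term, where you correctly observe that the mesh hypothesis $h_x/h_{\xi,\min},\,h_\xi/h_{x,\min}\leq\sigma_0$ forces $h_\Kx\sim h_\Kxi\sim h_K$ and hence an isotropic surface Jacobian; this quietly introduces a dependence of $C$ on $\sigma_0$ that the lemma's statement omits but that the paper acknowledges elsewhere (e.g.\ in the constants of Theorem \ref{thm:mainresult}).
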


%%%%%%%%%%%
% Lemma
%%%%%%%%%%%
\begin{lem}[{Inverse inequality}]
\label{lem:inverse}
There exists a constant $C>0$, such that  for any $g\in P^m(K)$ or $P^m(K_x)\times P^m(K_\xi)$ with $K=(K_x\times K_\xi)\in\mT_h$, and for any $\bU\in [P^m(K_x)]^{d_x}$, the following hold: 
\begin{equation*}
||\nabla_\bx g||_{0,K}\leq C h_\Kx^{-1}||g||_{0,K},\qquad ||\nabla_\xi g||_{0,K}\leq C h_\Kxi^{-1}||g||_{0,K},
\end{equation*}
\begin{equation*}
|| \bU||_{0,\infty,K_x}\leq C h_\Kx^{-{d_x}/2}||\bU||_{0,K_x},\qquad || \bU||_{0,\partial K_x}\leq C h_\Kx^{-1/2}||\bU||_{0,K_x}~, 
\end{equation*}
where the constant $C$ is independent of the mesh sizes $h_\Kx$, $h_\Kxi$, but depends on $m$ and the shape regularity parameters $\sigma_x$ and $\sigma_\xi$ of the mesh.
\end{lem}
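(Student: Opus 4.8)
\emph{Proof plan.} The plan is to prove each of the four inequalities by the classical scaling argument of finite element analysis \cite{Ciarlet:book}: map an arbitrary mesh element onto a fixed reference element via an affine change of variables, invoke the equivalence of norms on the finite-dimensional polynomial space carried by that reference element, and then track how each norm transforms under the mapping. Uniformity of all constants in the mesh size will be a direct consequence of shape-regularity, which bounds the singular values of the affine maps by $h_{K_\star}$ from above and below, up to constants depending only on $\sigma_x$ and $\sigma_\xi$.

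For the two gradient estimates, the element $K=K_x\times K_\xi$ is mapped onto a fixed reference product $\hat K=\hat K_x\times\hat K_\xi$ by $F_K=F_{K_x}\times F_{K_\xi}$, where each $F_{K_\star}$ is affine with Jacobian matrix $B_{K_\star}$ of size $\sim h_{K_\star}$ (for rotated Cartesian boxes $B_{K_\star}$ is a rotation composed with a diagonal scaling; for simplices it is the usual affine map to the reference simplex). A function $g$ of total degree at most $m$, or of the tensored form $P^m(K_x)\times P^m(K_\xi)$, pulls back to a function $\hat g$ in the corresponding fixed finite-dimensional space on $\hat K$, and by the chain rule $\nabla_\bx g=B_{K_x}^{-T}\widehat\nabla_{\hat\bx}\hat g$ involves only the Jacobian of $F_{K_x}$, hence picks up a factor $\sim h_{K_x}^{-1}$; the volume Jacobians $|\det B_K|=|\det B_{K_x}|\,|\det B_{K_\xi}|$ cancel between $||\nabla_\bx g||_{0,K}$ and $||g||_{0,K}$. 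On $\hat K$ one has $||\widehat\nabla\hat g||_{0,\hat K}\le \hat C\,||\hat g||_{0,\hat K}$ simply because both sides are norms on a finite-dimensional space, with $\hat C$ depending only on $m$, $d_x$, $d_\xi$ and the finitely many admissible reference shapes. This yields $||\nabla_\bx g||_{0,K}\le C h_{K_x}^{-1}||g||_{0,K}$, and the estimate for $\nabla_\xi g$ follows by exchanging the roles of $x$ and $\xi$.

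The remaining two bounds are the standard inverse estimate between $L^\infty$ and $L^2$ and the standard trace inequality, applied on the single element $K_x$ to $\bU\in[P^m(K_x)]^{d_x}$. Mapping $K_x$ to $\hat K_x$: the $L^\infty$ norm is invariant, while $||\cdot||_{0,K_x}^2$ scales like $|\det B_{K_x}|\sim h_{K_x}^{d_x}$, so combining with the finite-dimensional equivalence $||\hat\bU||_{0,\infty,\hat K_x}\le\hat C\,||\hat\bU||_{0,\hat K_x}$ gives the factor $h_{K_x}^{-d_x/2}$. Since, by shape-regularity, the surface measure of $\partial K_x$ scales like $h_{K_x}^{d_x-1}$ against $h_{K_x}^{d_x}$ for the volume, the reference trace bound $||\hat\bU||_{0,\partial\hat K_x}\le\hat C\,||\hat\bU||_{0,\hat K_x}$ produces the factor $h_{K_x}^{-1/2}$.

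There is no genuine obstacle here: every step is textbook. The only points deserving care are that the admissible elements include rotated Cartesian boxes as well as simplices — but a rotation is an isometry and does not affect any scaling exponent — and that, in the tensor-product gradient estimates, the reference-element norm equivalence must be applied on the fixed finite-dimensional space on $\hat K=\hat K_x\times\hat K_\xi$ obtained by mapping \emph{both} factors at once, so that the equivalence constant is independent of the individual elements $K_x$ and $K_\xi$. With these observations in place the lemma follows at once.
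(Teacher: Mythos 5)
Your scaling argument is correct and is precisely the standard proof the paper relies on: the lemma is stated without proof and simply cites the textbook reference for these inverse and trace inequalities, whose canonical derivation is the affine-map-to-reference-element plus finite-dimensional-norm-equivalence argument you give. All four scaling exponents and the dependence of the constants on $m$, $\sigma_x$, $\sigma_\xi$ check out.
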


\subsection{The Semi-Discrete DG Methods}
\label{sec:DGmethod}

On the PDE level, the two equations in \eqref{eq:max:4} involving the divergence of the magnetic and electric fields can be derived from the remaining part of the VM system;  therefore,  the numerical methods proposed in this section are formulated for the VM system without \eqref{eq:max:4}. We want to stress that even though in principle the initial satisfaction of these constraints is sufficient for their satisfaction for all time, in certain circumstance one may need to consider explicitly  such divergence conditions  in order to produce  physically relevant numerical simulations \cite{Munz:2000, barth2006role}.

Given $k, r\geq 0$, the semi-discrete DG methods for the VM system are defined by the  following procedure: for any $K=\Kx\times\Kxi\in\mT_h$, look for $f_h\in\mG_h^k$, $\bE_h, \bB_h\in\mU_h^r$, such that for any $g\in\mG_h^k$, $\bU, \bV\in\mU_h^r$, 
\begin{subequations}
\begin{align}
\int_K\df_t f_h g d\bx d\xi
&- \int_K f_h\xi\cdot\nabla_\bx g d\bx d\xi %- \int_K f_h\bE_h\cdot\nabla_\xi g d\bx d\xi
- \int_K f_h(\bE_h+\xi\times\bB_h)\cdot\nabla_\xi g d\bx d\xi\notag\\
&+ \int_{\Kxi}\int_{\df\Kx} \widehat{f_h \xi\cdot \bn_x} g ds_x d\xi + \int_{\Kx} \int_{\df\Kxi} \widehat{(f_h (\bE_h+\xi\times \bB_h)\cdot \bn_\xi)} g ds_\xi dx
%\notag\\
%&+ \int_{\Kx} \int_{\df\Kxi} \widehat{(f_h \xi\times \bB_h\cdot \bn_\xi)} g ds_\xi d\bx
=0~,\label{eq:scheme:1}\\
\int_\Kx\df_t\bE_h\cdot\bU d\bx
&=\int_\Kx\bB_h\cdot\nabla\times\bU d\bx+\int_{\df\Kx}\widehat{\bn_x\times\bB_h}\cdot \bU ds_x
   -\int_\Kx\bJ_h\cdot\bU d\bx~,\label{eq:scheme:2}\\
\int_\Kx\df_t\bB_h\cdot\bV d\bx
&=-\int_\Kx\bE_h\cdot\nabla\times\bV d\bx-\int_{\df\Kx}\widehat{\bn_x\times\bE_h}\cdot \bV ds_x~,\label{eq:scheme:3}
\end{align}
\end{subequations}
with
\beq
\label{eq:J}
{\bJ}_h(\bx, t)=\int_{\mT_h^\xi} f_h(\bx, \xi, t)\xi d\xi~.
\eeq
Here $\bn_x$ and $\bn_\xi$ are outward unit normals of $\df\Kx$ and $\df\Kxi$, respectively. All  `hat'  functions are numerical fluxes that are determined by   upwinding, i.e., 
%consistent, conservative, (note: reference) and are taken to be upwinding,
\begin{subequations}
\begin{align}
\widehat{f_h \xi\cdot \bn_x}:&=\widetilde{f_h \xi}\cdot \bn_x=\left(\{f_h\xi\}_x+\frac{|\xi\cdot\bn_x|}{2}[f_h]_x\right)\cdot\bn_x~,\label{eq:flux:1}\\
%\widehat{f_h \bE_h\cdot \bn_\xi}:&=\widetilde{f_h \bE_h}\cdot \bn_\xi=\left(\{f_h\bE_h\}_\xi+\frac{|\bE_h\cdot\bn_\xi|}{2}[f_h]_\xi\right)\cdot\bn_\xi~,\label{eq:flux:2}\\
\widehat{f_h (\bE_h+\xi\times \bB_h)\cdot \bn_\xi}:&=\widetilde{f_h (\bE_h+\xi\times \bB_h)}\cdot\bn_\xi\notag\\
&=\left(\{f_h(\bE_h+\xi\times\bB_h)\}_\xi+\frac{|(\bE_h+\xi\times\bB_h)\cdot\bn_\xi|}{2}[f_h]_\xi\right)\cdot\bn_\xi~,\label{eq:flux:3}\\
\widehat{\bn_x\times\bE_h}:&=\bn_x\times\widetilde{\bE_h}
=\bn_x\times \left(\{\bE_h\}_x+\frac{1}{2}[\bB_h]_\tau\right)~,\label{eq:flux:4}\\
\widehat{\bn_x\times\bB_h}:&=\bn_x\times\widetilde{\bB_h}
=\bn_x\times \left(\{\bB_h\}_x-\frac{1}{2}[\bE_h]_\tau\right)~,
\label{eq:flux:5}
\end{align}
\end{subequations}
where these relations define the meaning of `tilde'. 

For the Maxwell part, we also consider two other numerical fluxes: central flux and alternating flux, which are defined by 
\begin{subequations}
\begin{align}
&\mbox{Central flux:}\qquad \widetilde{\bE_h}=\{\bE_h\}, \;\widetilde{\bB_h}=\{\bB_h\}~,\label{eq:flux:6}\\
&\mbox{Alternating flux:}\qquad \widetilde{\bE_h}=\bE_h^+, \; \widetilde{\bB_h}=\bB_h^-, \;\mbox{or}\;
\widetilde{\bE_h}=\bE_h^-,\; \widetilde{\bB_h}=\bB_h^+~.\label{eq:flux:7}
\end{align}
\end{subequations}

Upon summing up \eqref{eq:scheme:1} with respect to $K\in\mT_h$ and  similarly summing  \eqref{eq:scheme:2} and \eqref{eq:scheme:3} with respect to $K_x\in\mT_h^x$, the numerical method becomes the following: look for $f_h\in\mG_h^k$, $\bE_h, \bB_h\in\mU_h^r$, such that
\begin{subequations}
\begin{align}
a_h(f_h,\bE_h,\bB_h; g)&=0~,\label{eq:scheme:1:a}\\
b_h(\bE_h,\bB_h; \bU, \bV)&=l_h(\bJ_h;\bU)~,\label{eq:scheme:2:a}
\end{align}
\end{subequations}
for any $g\in\mG_h^k$, $\bU, \bV\in\mU_h^r$, where
\begin{align*}
a_h(f_h,\bE_h,\bB_h; g)=&a_{h,1}(f_h;g)+a_{h,2}(f_h, \bE_h, \bB_h; g)~,\qquad l_h(\bJ_h; \bU)=-\int_{\mT_h^x}\bJ_h\cdot\bU d\bx~\\
b_h(\bE_h, \bB_h; \bU, \bV)=
&\int_{\mT_h^x}\df_t\bE_h\cdot\bU d\bx
-\int_{\mT_h^x}\bB_h\cdot\nabla\times\bU d\bx-\int_{\mE_x}\widetilde{\bB_h}\cdot [\bU]_\tau ds_x\\
&+\int_{\mT_h^x}\df_t\bB_h\cdot\bV d\bx
+\int_{\mT_h^x}\bE_h\cdot\nabla\times\bV d\bx+\int_{\mE_x}\widetilde{\bE_h}\cdot[\bV]_\tau ds_x~,%\\
%l_h(\bJ_h; \bU)=&-\int_{\mT_h^x}\bJ_h\cdot\bU d\bx~,
\end{align*}
and
\begin{align*}
a_{h,1}(f_h;g)&=\int_{\mT_h}\df_t f_h g d\bx d\xi- \int_{\mT_h} f_h\xi\cdot\nabla_\bx g d\bx d\xi +\int_{\mT_h^\xi}\int_{\mE_x} \widetilde{f_h \xi}\cdot [g]_x ds_x d\xi~,\\
a_{h,2}(f_h,\bE_h, \bB_h; g)&=- \int_{\mT_h} f_h(\bE_h+\xi\times\bB_h)\cdot\nabla_\xi g d\bx d\xi+\int_{\mT_h^x}\int_{\mE_\xi} \widetilde{f_h (\bE_h+\xi\times\bB_h)}\cdot [g]_\xi ds_\xi d\bx~.
%a_{h,3}(f_h,\bB_h; g)&=- \int_{\mT_h} f_h\xi\times\bB_h\cdot\nabla_\xi g d\bx d\xi+\int_{\mT_h^x}\int_{\mE_\xi} \widetilde{f_h \xi\times \bB_h}\cdot [g]_\xi ds_\xi d\bx~.
\end{align*}
Note,  $a_h$ is linear with respect to $f_h$ and $g$, yet it is in general nonlinear with respect to $\bE_h$ and $\bB_h$ due to \eqref{eq:flux:3}.
%nonlinear with respect to $f_h$, $\bE_h$ and  $\bB_h$ as a whole, yet it is linear with respect to each of them.
Recall,  the exact solution $f$ has compact support in $\xi$; therefore,  the numerical fluxes of  \eqref{eq:flux:1}-\eqref{eq:flux:5} and  \eqref{eq:flux:6} and \eqref{eq:flux:7} are consistent and, consequently,  so is the proposed numerical method. That is, the exact solution $(f, \bE, \bB)$ satisfies
\begin{align*}
a_h(f, \bE, \bB; g)&=0, \qquad \forall g\in \mG_h^k~,\\
b_h(\bE, \bB; \bU, \bV)&=l_h(\bJ; \bU), \qquad \forall \bU, \bV \in \mU_h^r~.
\end{align*}

\subsection{Temporal Discretizations}

We use total variation diminishing (TVD) high-order Runge-Kutta
methods to solve the method of lines  ODE resulting from the semi-discrete DG scheme, $\frac{d}{dt}G_h=R(G_h)$.
%\begin{equation}
%\label{mol} \frac{d}{dt}G_h=R(G_h) .
%\end{equation}
Such time stepping methods are convex combinations of the Euler
forward time discretization. The commonly used third-order TVD
Runge-Kutta method  is given by
\begin{eqnarray}
\label{3rk}
G_h^{(1)}&=&G_h^n+\triangle t R(G_h^n) \nonumber \\
G_h^{(2)}&=&\frac{3}{4}G_h^n+\frac{1}{4}G_h^{(1)}+
\frac{1}{4}\triangle t R(G_h^{(1)}) \nonumber \\
G_h^{n+1}&=&\frac{1}{3}G_h^n+\frac{2}{3}G_h^{(2)}+
\frac{2}{3}\triangle t R(G_h^{(2)}),
\end{eqnarray}
where $G_h^n$ represents a numerical approximation of the solution at discrete time $t_n$.  A detailed description of the TVD Runge-Kutta method can be found in \cite{Shu_1988_JCP_NonOscill}; see also \cite{Gottlieb_1998_MC_RK} and \cite{Gottlieb_2001_SIAM_Stabi}  for strong-stability-perserving
methods.

\section{Conservation and Stability}
\label{conserve}

In this section, we will establish  conservation and stability properties of  the semi-discrete DG methods.
%. In particular, we have focused on the semi-discrete conservation (i.e. the case when $\triangle t \rightarrow 0$).
In particular, we prove that subject to boundary effects, the total charge  (mass) is always conserved. As for the total energy of the system,   conservation depends on the choice of numerical fluxes for the Maxwell equations.  We also show that $f_h$ is $L^2$ stable, which facilitates the error analysis of Section \ref{error}.    

%%%%%%%%%
% Lemma %
%%%%%%%%%
\begin{lem}[{Mass conservation}]
The numerical solution $f_h\in\mG_h^k$ with $k\geq 0$ satisfies
\begin{equation}
\frac{d}{dt}\int_{\mT_h} f_h d\bx d\xi+\Theta_{h,1}(t)=0~,
%\int_{\mT_h^x}\int_{\mE_\xi^b} f_h \max((\bE_h+\xi\times\bB_h)\cdot\bn_\xi, 0)ds_\xi d\bx=0~.
\label{eq:MassC}
\end{equation}
where 
\begin{equation*}
\Theta_{h,1}(t)=\int_{\mT_h^x}\int_{\mE_\xi^b} f_h \max((\bE_h+\xi\times\bB_h)\cdot\bn_\xi, 0)ds_\xi d\bx~.
\end{equation*}
Equivalently, with $\rho_h(\bx, t)=\int_{\mT_h^\xi} f_h(\bx,\xi,t) d\xi$, for any $T>0$, the following holds: 
\begin{align}
\int_{\mT_h^x} \rho_h(\bx, T)d\bx+\int_0^T\Theta_{h,1}(t)dt
=\int_{\mT_h^x} \rho_h(\bx, 0) d\bx~.
\label{eq:MassC:1}
\end{align}
\label{lem:MassC}
\end{lem}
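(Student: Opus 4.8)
The plan is to prove mass conservation by choosing the trivial test function $g \equiv 1$ in the Vlasov scheme. Since $k \geq 0$, the constant function is in $\mG_h^k$, so this is a legitimate choice. With $g = 1$, both $\nabla_\bx g$ and $\nabla_\xi g$ vanish, so the two volume transport terms $- \int_K f_h \xi \cdot \nabla_\bx g$ and $- \int_K f_h(\bE_h + \xi \times \bB_h)\cdot\nabla_\xi g$ drop out immediately. What remains in \eqref{eq:scheme:1} is the time-derivative term $\int_K \df_t f_h \,d\bx d\xi$ and the two numerical flux terms on $\df\Kx$ and $\df\Kxi$. Summing over all $K \in \mT_h$, I would handle the two flux contributions separately.

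First I would deal with the $x$-flux term $\int_{\mT_h^\xi}\int_{\mE_x} \widetilde{f_h\xi}\cdot [g]_x \,ds_x d\xi$ (in the summed-up form $a_{h,1}$). With $g=1$, the jump $[g]_x = \bn_x^+ + \bn_x^- = 0$ on every interior edge, and because the boundary conditions in $\bx$ are periodic, every edge in $\mE_x$ is ``interior'' in this sense — there are no genuine physical $x$-boundary edges. Hence this entire term vanishes. For the $\xi$-flux term $\int_{\mT_h^x}\int_{\mE_\xi}\widetilde{f_h(\bE_h+\xi\times\bB_h)}\cdot[g]_\xi\,ds_\xi d\bx$, the same cancellation $[g]_\xi = 0$ kills all contributions from interior edges $\mE_\xi^i$, but the boundary edges $\mE_\xi^b$ survive because there $[g]_\xi = g\,\bn_\xi = \bn_\xi$ by the convention \eqref{eq:jump:ave:b}. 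On a boundary edge, the same convention gives $\{f_h(\bE_h+\xi\times\bB_h)\}_\xi = \tfrac12 f_h(\bE_h+\xi\times\bB_h)$ and $\{f_h\}_\xi=\tfrac12 f_h$ (there is only an interior trace), so the upwind flux \eqref{eq:flux:3} dotted with $\bn_\xi$ becomes $\tfrac12 f_h(\bE_h+\xi\times\bB_h)\cdot\bn_\xi + \tfrac12 |(\bE_h+\xi\times\bB_h)\cdot\bn_\xi|\, f_h = f_h \max((\bE_h+\xi\times\bB_h)\cdot\bn_\xi,\,0)$, using the identity $\tfrac12(a + |a|) = \max(a,0)$. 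This is exactly $\Theta_{h,1}(t)$, which establishes \eqref{eq:MassC}.

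Then \eqref{eq:MassC:1} follows by integrating \eqref{eq:MassC} in time from $0$ to $T$, commuting $\tfrac{d}{dt}$ with the spatial integral, and recognizing $\int_{\mT_h}f_h\,d\bx d\xi = \int_{\mT_h^x}\rho_h(\bx,t)\,d\bx$ from the definition of $\rho_h$; the fundamental theorem of calculus then gives the stated identity.

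I do not anticipate a serious obstacle here — the argument is essentially a bookkeeping exercise once $g=1$ is plugged in. The one point requiring a little care is the boundary-edge flux evaluation: one must use the $\mE_\xi^b$ conventions \eqref{eq:jump:ave:b} consistently and correctly apply the upwinding identity $\tfrac12(a+|a|)=\max(a,0)$ to recover precisely the form of $\Theta_{h,1}$ (in particular the sign, i.e.\ that only the outflow part $\max(\cdot,0)$ contributes). One should also note explicitly that periodicity in $\bx$ means $\mE_x$ contributes nothing, so the only ``boundary effect'' is the $\xi$-space term $\Theta_{h,1}$, which is negligible precisely when $\Oxi$ is taken large enough that $f_h \approx 0$ near $\df\Oxi$ — consistent with the compact-support assumption on the exact $f$.
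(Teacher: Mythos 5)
Your proposal is correct and follows essentially the same route as the paper: take $g\equiv 1$ in \eqref{eq:scheme:1:a}, note that continuity of $g$ kills all interior-edge and $x$-edge flux terms, and evaluate the surviving $\mE_\xi^b$ contribution via \eqref{eq:jump:ave:b} and the identity $\tfrac12(a+|a|)=\max(a,0)$ to recover $\Theta_{h,1}$, then integrate in time. No gaps.
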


 \begin{proof}
 Let $g(\bx, \xi)=1$. Note that $g\in\mathcal{G}_h^k$, for any $k\geq 0$,  is continuous  and
$\nabla_\bx g=0$. Taking  this $g$ as the test function in \eqref{eq:scheme:1:a}, one has
\begin{align*}
\frac{d}{dt}\int_{\mT_h} f_h d\bx d\xi+\int_{\mT_h^x}\int_{\mE_\xi^b} \widetilde{f_h (\bE_h+\xi\times\bB_h)}\cdot [g]_\xi ds_\xi d\bx=0~.
\end{align*}
With the numerical flux of \eqref{eq:flux:3} and the average and jump across $\mE_\xi^b$ of  \eqref{eq:jump:ave:b}, the second term above becomes
\begin{align}
&\int_{\mT_h^x}\int_{\mE_\xi^b} \widetilde{f_h (\bE_h+\xi\times\bB_h)}\cdot \bn_\xi ds_\xi d\bx\\ =&\int_{\mT_h^x}\int_{\mE_\xi^b} \frac{f_h}{2} \left((\bE_h+\xi\times\bB_h)\cdot\bn_\xi+|(\bE_h+\xi\times\bB_h)\cdot\bn_\xi|\right)ds_\xi d\bx=\Theta_{h,1}(t)~,
%\int_{\mT_h^x}\int_{\mE_\xi^b} f_h \max((\bE_h+\xi\times\bB_h)\cdot\bn_\xi, 0)ds_\xi d\bx~,
\end{align}
and this gives \eqref{eq:MassC}. Integrating in time from $0$ to $T$ gives \eqref{eq:MassC:1}.
\end{proof}

%%%%%%%%%
% Lemma %
%%%%%%%%%
\begin{lem}[{Energy conservation 1}]
\label{lem:EneC}
For $k\geq 2$, $r\geq 0$,  the numerical solution $f_h\in\mG_h^k$, $\bE_h, \bB_h\in\mU_h^r$ with the upwind numerical fluxes \eqref{eq:flux:1}-\eqref{eq:flux:5} satisfies
\begin{equation}
\frac{d}{dt}\left(\int_{\mT_h} f_h|\xi|^2 d\bx d\xi+\int_{\mT_h^x} (|\bE_h|^2+|\bB_h|^2) d\bx\right)
+\Theta_{h,2}(t)+\Theta_{h,3}(t)=0~,
\label{eq:EneC}
\end{equation}
with
\begin{equation*}
\Theta_{h,2}(t)=\int_{\mE_x}\left( |[\bE_h]_\tau|^2+|[\bB_h]_\tau|^2 \right) ds_x~, \qquad \Theta_{h,3}(t) 
=\int_{\mT_h^x}\int_{\mE_\xi^b}f_h|\xi|^2 \max((\bE_h+\xi\times\bB_h)\cdot\bn_\xi, 0)  ds_\xi d\bx~.
\end{equation*}
%
%Equivalently, for any $T>0$ there is
%
%\begin{align*}
%& \int_{\mT_h} f_h(\bx,\xi,T)|\xi|^2 d\bx d\xi+\int_{\mT_h^x} (|\bE_h(\bx,T)|^2+|\bB_h(\bx,T)|^2) d\bx
%+\int_0^T(\Theta_{h,2}(t)+\Theta_{h,3}(t))dt\\
%&=\int_{\mT_h} f_h(\bx,\xi,0)|\xi|^2 d\bx d\xi+\int_{\mT_h^x} (|\bE_h(\bx,0)|^2+|\bB_h(\bx,0)|^2) d\bx~.
%\end{align*}
\end{lem}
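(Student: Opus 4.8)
The plan is the classical ``test against the energy density'' argument, carried out in parallel on the Vlasov and the Maxwell pieces so that the kinetic--field coupling term cancels. The hypothesis $k\ge 2$ is used only to guarantee that $g=|\xi|^2$ is an admissible test function in \eqref{eq:scheme:1:a}, since $|\xi|^2\in P^2(K)\subset\mG_h^k$; moreover this $g$ is globally continuous and independent of $\bx$, so $[g]_x=0$ on $\mE_x$, $[g]_\xi=0$ on the interior $\xi$-edges, $\nabla_\bx g=0$, and $\nabla_\xi g=2\xi$, which is what makes the computation short.

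For the Vlasov contribution I would substitute $g=|\xi|^2$ into $a_h(f_h,\bE_h,\bB_h;g)=0$. In $a_{h,1}$ the advection volume term and the $\mE_x$ edge term both drop out, leaving just $\frac{d}{dt}\int_{\mT_h}f_h|\xi|^2\,d\bx d\xi$. In $a_{h,2}$, using $(\xi\times\bB_h)\cdot\xi=0$ and \eqref{eq:J}, the volume term becomes $-2\int_{\mT_h^x}\bJ_h\cdot\bE_h\,d\bx$, while the $\mE_\xi$ edge term survives only over $\mE_\xi^b$; there the upwind flux \eqref{eq:flux:3} together with the boundary conventions \eqref{eq:jump:ave:b} reduce it to $\Theta_{h,3}(t)$, by the same manipulation as in the proof of Lemma~\ref{lem:MassC} but now carrying the weight $|\xi|^2$. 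This gives
\begin{equation*}
\frac{d}{dt}\int_{\mT_h}f_h|\xi|^2\,d\bx d\xi-2\int_{\mT_h^x}\bJ_h\cdot\bE_h\,d\bx+\Theta_{h,3}(t)=0~.
\end{equation*}

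For the Maxwell contribution I would take $(\bU,\bV)=(\bE_h,\bB_h)$ in \eqref{eq:scheme:2:a}. The time-derivative terms give $\frac12\frac{d}{dt}\int_{\mT_h^x}(|\bE_h|^2+|\bB_h|^2)\,d\bx$, and the source term gives $-\int_{\mT_h^x}\bJ_h\cdot\bE_h\,d\bx$. I would then integrate the curl volume term of $b_h$ by parts on each $\Kx$ through $\nabla\cdot(\bB_h\times\bE_h)=\bE_h\cdot\nabla\times\bB_h-\bB_h\cdot\nabla\times\bE_h$ and sum (every $\bx$-edge being interior by periodicity), obtaining $\int_{\mE_x}[\bB_h\times\bE_h]_x\,ds_x$; combining this with the numerical-flux edge terms $\int_{\mE_x}(\widetilde{\bE_h}\cdot[\bB_h]_\tau-\widetilde{\bB_h}\cdot[\bE_h]_\tau)\,ds_x$, inserting \eqref{eq:flux:4}--\eqref{eq:flux:5}, and applying \eqref{eq:equality:1} with $\bU\mapsto\bB_h$, $\bV\mapsto\bE_h$, the average--jump cross terms cancel and only $\frac12\int_{\mE_x}(|[\bE_h]_\tau|^2+|[\bB_h]_\tau|^2)\,ds_x=\frac12\Theta_{h,2}(t)$ remains. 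Multiplying through by $2$,
\begin{equation*}
\frac{d}{dt}\int_{\mT_h^x}(|\bE_h|^2+|\bB_h|^2)\,d\bx+\Theta_{h,2}(t)+2\int_{\mT_h^x}\bJ_h\cdot\bE_h\,d\bx=0~.
\end{equation*}
Adding the two displayed identities makes the $\pm2\int_{\mT_h^x}\bJ_h\cdot\bE_h\,d\bx$ terms cancel, which is exactly \eqref{eq:EneC}.

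I expect no real difficulty in the Vlasov step (a weighted copy of Lemma~\ref{lem:MassC}) or in the cancellation of the coupling term, which is automatic. The one place that needs care is the Maxwell edge bookkeeping: one must track the sign of the elementwise integration by parts precisely so that the curl volume term recombines with the upwind-flux edge terms into the \emph{nonnegative} dissipation $\Theta_{h,2}$ through \eqref{eq:equality:1}, rather than into an indefinite remainder. (With the central flux \eqref{eq:flux:6} the same computation instead yields $\Theta_{h,2}\equiv0$, i.e.\ exact discrete energy conservation up to the $\xi$-boundary term $\Theta_{h,3}$.)
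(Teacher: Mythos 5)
Your proposal is correct and follows essentially the same route as the paper: test the Vlasov scheme with $g=|\xi|^2$ (using $k\ge 2$, $\nabla_\bx g=0$, $\xi\times\bB_h\cdot\nabla_\xi g=0$) to produce $2\int_{\mT_h^x}\bJ_h\cdot\bE_h\,d\bx-\Theta_{h,3}$, test the Maxwell scheme with $(\bU,\bV)=(\bE_h,\bB_h)$ and use \eqref{eq:equality:1} with the upwind fluxes \eqref{eq:flux:4}--\eqref{eq:flux:5} to produce $-\int_{\mT_h^x}\bJ_h\cdot\bE_h\,d\bx$ and $\tfrac12\Theta_{h,2}$, then cancel the coupling term. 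The only cosmetic difference is that you carry out the elementwise integration by parts explicitly where the paper recombines the already-summed form of $b_h$ into $[\bE_h\times\bB_h]_x$; the bookkeeping is equivalent.
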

\begin{proof}~
{\sf Step 1:} Let $g(\bx, \xi)=|\xi|^2$. Note that $g\in\mathcal{G}_h^k$ for $k\geq 2$ and it is continuous. In addition,
$\nabla_\bx g=0$, $\nabla_\xi g=2\xi$, and $\xi\times\bU\cdot\nabla_\xi g=0$ for any function $\bU$. Taking this $g$ as the test function in \eqref{eq:scheme:1:a}, one has
\begin{align*}
& \frac{d}{dt}\int_{\mT_h} f_h|\xi|^2 d\bx d\xi
=2\int_{\mT_h} f_h\bE_h\cdot\xi d\bx d\xi- \int_{\mT_h^x}\int_{\mE_\xi^b} \widetilde{f_h (\bE_h+\xi\times\bB_h)}\cdot [|\xi|^2]_\xi ds_\xi d\bx\\
%&=2 \int_{\mT_h^x}\bE_h\cdot\left(\int_{\mT_h^\xi} f_h\xi  d\xi \right)d\bx \\
&=%2 \int_{\mT_h^x}\bE_h\cdot\bJ_h d\bx- \int_{\mT_h^x}\int_{\mE_\xi^b}
2\int_{\mT_h^x}\bE_h\cdot\left(\int_{\mT_h^\xi} f_h\xi  d\xi \right)d\bx-
\int_{\mT_h^x}\int_{\mE_\xi^b}\left(\frac{1}{2}(\bE_h+\xi\times\bB_h)f_h+\frac{|(\bE_h+\xi\times\bB_h)\cdot\bn_\xi|}{2}f_h\bn_\xi\right)\cdot(|\xi|^2 \bn_\xi)ds_\xi d\bx\\
&=2 \int_{\mT_h^x}\bE_h\cdot\bJ_h d\bx-
\int_{\mT_h^x}\int_{\mE_\xi^b}\frac{f_h}{2}\left((\bE_h+\xi\times\bB_h)\cdot\bn_\xi+|(\bE_h+\xi\times\bB_h)\cdot\bn_\xi|\right)  |\xi|^2 ds_\xi d\bx\\
&=2 \int_{\mT_h^x}\bE_h\cdot\bJ_h d\bx-
\int_{\mT_h^x}\int_{\mE_\xi^b} f_h|\xi|^2  \max((\bE_h+\xi\times\bB_h)\cdot\bn_\xi, 0) ds_\xi d\bx
%\widetilde{f_h (\bE_h+\xi\times\bB_h)}\cdot [|\xi|^2]_\xi
 \end{align*}
{\sf Step 2:} With $\bU=\bE_h$ and $\bV=\bB_h$, \eqref{eq:scheme:2:a} becomes
\begin{align*}
-\int_{\mT_h^x}\bJ_h\cdot\bE_h d\bx
&=\frac{1}{2}\frac{d}{dt}\int_{\mT_h^x} |\bE_h|^2 d\bx
-\int_{\mT_h^x}\bB_h\cdot\nabla\times\bE_h d\bx-\int_{\mE_x}\widetilde{\bB_h}\cdot [\bE_h]_\tau ds_x\\
&+\frac{1}{2}\frac{d}{dt}\int_{\mT_h^x} |\bB_h|^2 d\bx+\int_{\mT_h^x}\bE_h\cdot\nabla\times\bB_h d\bx+\int_{\mE_x}\widetilde{\bE_h}\cdot [\bB_h]_\tau ds_x~,\\
&=\frac{1}{2}\frac{d}{dt}\int_{\mT_h^x} \left(|\bE_h|^2+|\bB_h|^2\right) d\bx
-\int_{\mE_x}\left([\bE_h\times\bB_h]_x+\widetilde{\bB_h}\cdot [\bE_h]_\tau-\widetilde{\bE_h}\cdot [\bB_h]_\tau \right)ds_x~,\\
&=\frac{1}{2}\frac{d}{dt}\int_{\mT_h^x} \left(|\bE_h|^2+|\bB_h|^2\right) d\bx
 +\frac{1}{2}\int_{\mE_x}\left( |[\bE_h]_\tau|^2+|[\bB_h]_\tau|^2 \right) ds_x~.
\end{align*}
The last equality uses the formulas of the upwind fluxes \eqref{eq:flux:4}-\eqref{eq:flux:5} as well as \eqref{eq:equality:1}.

Combining  the results in previous two steps, one concludes  \eqref{eq:EneC}.
\end{proof}

%In the Maxwell solver, other numerical fluxes can be used as well. Some examples are given below together with the energy conservation result.
\smallskip
\begin{cor}[{Energy conservation 2}]
For $k\geq 2$, $r\geq 0$ and   the numerical solution $f_h\in\mG_h^k$, $\bE_h, \bB_h\in\mU_h^r$ with the upwind numerical flux \eqref{eq:flux:1}-\eqref{eq:flux:3} for the Vlasov part, and with either the central or alternating flux of  \eqref{eq:flux:6}-\eqref{eq:flux:7} for   the Maxwell part,  the following holds:
%Under the same assumptions of Lemma \ref{lem:EneC} except using one of the following numerical fluxes instead of \eqref{eq:flux:4}-\eqref{eq:flux:5},
%\begin{itemize}
%\item [(1)] Central flux: $\widetilde{\bE_h}=\{\bE_h\}$ and $\widetilde{\bB_h}=\{\bB_h\}$~,
%\item [(2)] Alternating flux: $\widetilde{\bE_h}=\bE_h^+$ and $\widetilde{\bB_h}=\bB_h^-$, or
%$\widetilde{\bE_h}=\bE_h^-$ and $\widetilde{\bB_h}=\bB_h^+$~,
%\item [(3)] $\alpha$-flux: $\widetilde{\bE_h}=\{\bE_h\}+\alpha[\bE_h]_\tau$ and $\widetilde{\bB_h}=\{\bB_h\}+\alpha[\bB_h]_\tau$, with $\alpha$ being a constant parameter,
%\end{itemize}
%then
\begin{equation*}
\frac{d}{dt}\left(\int_{\mT_h} f_h|\xi|^2 d\bx d\xi+\int_{\mT_h^x} (|\bE_h|^2+|\bB_h|^2) d\bx\right)+\Theta_{h,3}(t)=0~.
\end{equation*}
\label{col:energy}
\end{cor}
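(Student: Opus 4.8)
The plan is to reuse the two-step argument of Lemma~\ref{lem:EneC} essentially verbatim, the sole difference being that the Maxwell edge terms now contribute nothing at all (rather than the dissipative $\Theta_{h,2}$). For \emph{Step~1}, since $k\ge 2$ the choice $g(\bx,\xi)=|\xi|^2$ still lies in $\mG_h^k$ and is continuous, with $\nabla_\bx g=0$, $\nabla_\xi g=2\xi$, and $(\xi\times\bU)\cdot\nabla_\xi g=0$ for any $\bU$; this step never sees the Maxwell flux, so the same computation as in the proof of Lemma~\ref{lem:EneC} gives
\begin{equation*}
\frac{d}{dt}\int_{\mT_h} f_h|\xi|^2\,d\bx\,d\xi = 2\int_{\mT_h^x}\bE_h\cdot\bJ_h\,d\bx - \Theta_{h,3}(t)~.
\end{equation*}

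For \emph{Step~2}, take $\bU=\bE_h$, $\bV=\bB_h$ in \eqref{eq:scheme:2:a} and combine the two volume curl terms into the interior-edge expression $[\bE_h\times\bB_h]_x$ exactly as before (periodicity in $\bx$ leaves no physical boundary contribution), obtaining
\begin{equation*}
-\int_{\mT_h^x}\bJ_h\cdot\bE_h\,d\bx = \frac{1}{2}\frac{d}{dt}\int_{\mT_h^x}\bigl(|\bE_h|^2+|\bB_h|^2\bigr)\,d\bx - \int_{\mE_x}\bigl([\bE_h\times\bB_h]_x + \widetilde{\bB_h}\cdot[\bE_h]_\tau - \widetilde{\bE_h}\cdot[\bB_h]_\tau\bigr)\,ds_x~.
\end{equation*}
The key point is that the edge integrand vanishes pointwise for the two flux choices at hand. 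For the central flux \eqref{eq:flux:6} one has $\widetilde{\bE_h}=\{\bE_h\}_x$, $\widetilde{\bB_h}=\{\bB_h\}_x$, and the integrand is precisely the left-hand side of \eqref{eq:equality:1} (with $\bU=\bE_h$, $\bV=\bB_h$), hence $0$. For either admissible alternating pairing in \eqref{eq:flux:7}, the integrand coincides with the left-hand side of one of the two identities in \eqref{eq:equality:2}, hence again $0$. In both cases the last term drops and $-\int_{\mT_h^x}\bJ_h\cdot\bE_h\,d\bx = \tfrac12\frac{d}{dt}\int_{\mT_h^x}(|\bE_h|^2+|\bB_h|^2)\,d\bx$.

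Finally, adding twice the Step~2 identity to the Step~1 identity cancels the terms $\pm 2\int_{\mT_h^x}\bE_h\cdot\bJ_h\,d\bx$ and yields the asserted equality with no $\Theta_{h,2}$. There is no real obstacle here: the only things requiring care are bookkeeping items already dispatched in the proof of Lemma~\ref{lem:EneC} — namely checking that the curl integration by parts produces no leftover boundary term under the periodic $\bx$-conditions, and matching the sign conventions so that each alternating-flux pairing is aligned with the correct one of the two identities in \eqref{eq:equality:2}.
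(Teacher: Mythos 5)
Your proposal is correct and follows essentially the same route as the paper: the paper's proof likewise reuses the two steps of Lemma~\ref{lem:EneC} and observes that the edge integrand $[\bE_h\times\bB_h]_x+\widetilde{\bB_h}\cdot[\bE_h]_\tau-\widetilde{\bE_h}\cdot[\bB_h]_\tau$ vanishes identically for the central and alternating fluxes by \eqref{eq:equality:1}--\eqref{eq:equality:2}. Your matching of each alternating pairing to the corresponding identity in \eqref{eq:equality:2} is exactly right.
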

\begin{proof}
The proof proceeds the same way as for Lemma \ref{lem:EneC}. The only difference is that here the equalities \eqref{eq:equality:1}-\eqref{eq:equality:2} give 
\begin{equation*}
[\bE_h\times\bB_h]_x+\widetilde{\bB_h}\cdot [\bE_h]_\tau-\widetilde{\bE_h}\cdot [\bB_h]_\tau=0~,
\end{equation*}
 which holds for $\widetilde{\bE_h}$ and $\widetilde{\bB_h}$ defined in  the central or alternating flux in the Maxwell solver. 
\end{proof}

With either the central or alternating flux for the Maxwell solver, the energy does not change due to the tangential jump of the magnetic and electric fields as in Lemma \ref{lem:EneC}. This, on the other hand,  may have some effect on the accuracy of the methods (See Sections \ref{error} and \ref{numres} and also \cite{Ainsworth:2004}).

\medskip
{
\begin{rem}
In Lemma 3.2, the conservation error term satisfies 
$\Theta_{h,2}\ge  0$ with equality depending on the choice of numerical
fluxes for the Maxwell discretization.  In addition,  the error
conservation terms $\Theta_{h,1}$ in Lemma 3.1  and  $\Theta_{h,3}$ in
Lemmas 3.2 and Corollary 3.3 both depend on the numerical solution $f_h$
on the outflow portion of the computational boundary in $\xi$-space,
which is determined by the numerical electric and magnetic fields. Hence,
for the case of periodic boundary conditions in $x$-space for both the
Vlasov and Maxwell's equations, it can be easily shown that these error
terms $\Theta_{h,i} \approx 0,$ for $i = 1$ and $3$, by choosing the
computational domain in $\xi$-space sufficiently large.
\end{rem}
}

\medskip
\begin{rem}
Energy conservation holds as long as $|\xi|^2\in \mG^k_h$. Indeed, for $k<2$, the energy conservation results of Lemma \ref{lem:EneC} and Corollary \ref{col:energy} can be obtained if one replaces $\mG^k_h$ with ${\tilde\mG}^k_h=\mG^k_h\oplus\{|\xi|^2\}=\{g+c|\xi|^2,\; \forall g\in \mG^k_h, \forall c\in\mathbb{R}\}$.
\end{rem}

\bigskip

%%%%%%%%%%%%%%%%%%%%%%%%%%%%%%%%%
%%%%%%%%%%%%%%%%%%%%%%%%%%%%%%%%%%%%
%\begin{rem}
%In the conservation results in Lemmas \ref{lem:MassC}-\ref{lem:EneC} and Corollary \ref{col:energy}, $\Theta_{h,2}\geq 0$, and $\Theta_{h,1}$ and $\Theta_{h,3}$ depend on the numerical solution $f_h$ on the outgoing domain boundary in $\xi$ that is determined by ${\bf E}_h$ and ${\bf B}_h$.
%Though the exact solution $f$ has compact support in $\xi$-direction, the computed $f_h$ may be nonzero along the outgoing domain boundary in $\xi$-direction, and this may result in some mass/energy loss or gain.
%\end{rem}

Finally, we can obtain the $L^2$-stability result for $f_h$, a result that  is independent of choice of numerical flux  in the Maxwell solver.  This result will  be used in the error analysis of Section \ref{error}. 
%%%%%%%%%
% Lemma %
%%%%%%%%%
\begin{lem}[{$L^2$-stability of $f_h$}]
For $k\geq 0$, the numerical solution $f_h\in\mG_h^k$ satisfies
\begin{align}
\label{eq:L2Stab}
\frac{d}{dt}\left(\int_{\mT_h} |f_h|^2 d\bx d\xi\right)&+\int_{\mT_h^\xi}\int_{\mE_x} |\xi\cdot\bn_x||[f_h]_x|^2 ds_x d\xi\\
&+\int_{\mT_h^x}\int_{\mE_\xi} |(\bE_h+\xi\times\bB_h)\cdot\bn_\xi||[f_h]_\xi|^2 ds_\xi d\bx=0~.\notag
\end{align}
%Equivalently, this is
%\begin{align*}
%\int_{\mT_h} |f_h(\bx,\xi,T)|^2 d\bx d\xi
%&+\int_0^T\left(\int_{\mT_h^\xi}\int_{\mE_x} |\xi\cdot\bn_x||[f_h]_x|^2 ds_x d\xi\right)dt\\
%&+\int_0^T \left( \int_{\mT_h^x}\int_{\mE_\xi} |(\bE_h+\xi\times\bB_h)\cdot\bn_\xi||[f_h]_\xi|^2 ds_\xi
%d\bx\right)dt=
%\int_{\mT_h} |f_h(\bx,\xi,0)|^2 d\bx d\xi~.
%\end{align*}
%Moreover, one has
%\begin{equation*}
%\int_{\mT_h} |f_h(\bx,\xi,T)|^2 d\bx d\xi\leq
%\int_{\mT_h} |f_h(\bx,\xi,0)|^2 d\bx d\xi~.
%\end{equation*}
\label{lem:stability}
\end{lem}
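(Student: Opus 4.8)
The plan is to take $g = f_h$ as the test function in the Vlasov scheme \eqref{eq:scheme:1:a}, exploiting the fact that $f_h\in\mG_h^k$ is an admissible test function (unlike $|\xi|^2$, this requires no assumption $k\geq 2$). This reduces the proof to carefully handling the volume and edge terms of $a_{h,1}$ and $a_{h,2}$ separately, using integration by parts within each element and the standard DG algebra of jumps and averages.

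First I would treat $a_{h,1}(f_h;f_h)$. The time-derivative term immediately gives $\frac12\frac{d}{dt}\int_{\mT_h}|f_h|^2$. For the remaining two terms, on each element $K_x$ one integrates by parts: $-\int_{K_x} f_h\,\xi\cdot\nabla_\bx f_h = -\frac12\int_{K_x}\xi\cdot\nabla_\bx(f_h^2) = -\frac12\int_{\partial K_x} f_h^2\,\xi\cdot\bn_x$ (the bulk term vanishes since $\xi$ is independent of $\bx$). Summing over elements and regrouping by edges, the volume contribution becomes $-\frac12\int_{\mE_x}\xi\cdot[f_h^2]_x$ (understood componentwise with the normal), while the edge flux term is $\int_{\mE_x}\widetilde{f_h\xi}\cdot[f_h]_x$ with the upwind flux \eqref{eq:flux:1}. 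Using the identity \eqref{eq:equality:0}, $\frac12[f_h^2]_x=\{f_h\}_x[f_h]_x$, and expanding the upwind flux, the $\{f_h\xi\}_x$ part cancels against $\frac12[f_h^2]_x\cdot\xi$ up to the standard manipulation, leaving exactly the dissipative term $\frac12\int_{\mT_h^\xi}\int_{\mE_x}|\xi\cdot\bn_x|\,|[f_h]_x|^2$. Periodicity in $\bx$ ensures there are no physical $\bx$-boundary contributions.

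Next I would treat $a_{h,2}(f_h,\bE_h,\bB_h;f_h)$ in the same spirit, with the subtlety that the advection field $\bveps:=\bE_h+\xi\times\bB_h$ now depends on $\xi$. On each element $K_\xi$, $-\int_{K_\xi} f_h\,\bveps\cdot\nabla_\xi f_h = -\frac12\int_{K_\xi}\bveps\cdot\nabla_\xi(f_h^2) = -\frac12\int_{\partial K_\xi} f_h^2\,\bveps\cdot\bn_\xi + \frac12\int_{K_\xi} f_h^2\,\nabla_\xi\cdot\bveps$, and here $\nabla_\xi\cdot\bveps = \nabla_\xi\cdot(\xi\times\bB_h)=0$ since $\bB_h$ is independent of $\xi$ — this is the key cancellation that kills the bulk term. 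Summing over elements, regrouping by edges in $\mE_\xi$, and combining with the upwind edge term $\int_{\mT_h^x}\int_{\mE_\xi}\widetilde{f_h\bveps}\cdot[f_h]_\xi$, the same jump/average algebra as before (again via \eqref{eq:equality:0}) produces the dissipative term $\frac12\int_{\mT_h^x}\int_{\mE_\xi}|\bveps\cdot\bn_\xi|\,|[f_h]_\xi|^2$. For boundary edges in $\mE_\xi^b$ the conventions \eqref{eq:jump:ave:b} are designed precisely so that this same formula holds there too (consistent with $f$ compactly supported in $\xi$). Adding the two steps and multiplying by $2$ yields \eqref{eq:L2Stab}.

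The main obstacle — really the only non-routine point — is the $\xi$-space edge computation in Step 2: one must verify that the volume flux $-\frac12\int_{\mE_\xi}\bveps\cdot[f_h^2]_\xi$ combines with the upwind numerical flux to give a clean, sign-definite square, even though $\bveps$ depends on $\xi$ and the boundary edges use the one-sided conventions of \eqref{eq:jump:ave:b}. The resolution is that on a fixed edge $\bveps$ is single-valued (it depends only on $\xi$ and on $\bE_h,\bB_h$ which live in $\mU_h^r$ over $\mT_h^x$, hence are continuous in the $\xi$-variable), so it factors out of the jump/average manipulation exactly as a constant would, and the divergence-free property $\nabla_\xi\cdot(\xi\times\bB_h)=0$ handles the interior. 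Once this is seen, the algebra is identical to the classical scalar-conservation-law $L^2$-stability argument for upwind DG.
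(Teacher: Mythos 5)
Your proposal is correct and follows essentially the same route as the paper's proof: testing with $g=f_h$, element-wise integration by parts, the identity $\frac12[g^2]_\star=\{g\}_\star[g]_\star$, the divergence-free property $\nabla_\xi\cdot(\bE_h+\xi\times\bB_h)=0$, and the single-valuedness of the advection field across $\xi$-edges. No gaps.
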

\begin{proof}
Taking  $g=f_h$ in \eqref{eq:scheme:1:a}, one gets
\beq
\label{eq:L2Stab:1}
\frac{1}{2}\frac{d}{dt}\left(\int_{\mT_h} |f_h|^2 d\bx d\xi\right)
+R_1+R_2=0~,
\eeq
with
\begin{align*}
R_1 =- \int_{\mT_h} f_h\xi\cdot\nabla_\bx f_h d\bx d\xi+ \int_{\mT_h^\xi}\int_{\mE_x} \widetilde{f_h \xi}\cdot [f_h]_x ds_xd\xi,
\qquad R_2=a_{h,2}(f_h,\bE_h, \bB_h; f_h)~.
%, \qquad R_3=a_{h,3}(f_h,\bB_h; f_h)~.
\end{align*}
Observe
\begin{align*}
R_1 &=- \int_{\mT_h^\xi}\sum_{\Kx\in\mT_h^x}\int_\Kx \xi \cdot\nabla_\bx \left(\frac{f_h^2}{2}\right) d\bx d\xi+ \int_{\mT_h^\xi} \int_{\mE_x} \widetilde{f_h \xi}\cdot [f_h]_x ds_x d\xi~, \\
&=- \int_{\mT_h^\xi}\sum_{\Kx\in\mT_h^x}\int_{\df\Kx} \xi \cdot \bn_x \left(\frac{f_h^2}{2}\right) ds_x d\xi+ \int_{\mT_h^\xi} \int_{\mE_x} \widetilde{f_h \xi}\cdot [f_h]_x ds_x d\xi~, \\
&=- \int_{\mT_h^\xi}\int_{\mE_x}\frac{1}{2} [\xi{f_h^2}]_x ds_x d\xi+ \int_{\mT_h^\xi}\int_{\mE_x} \widetilde{f_h \xi}\cdot [f_h]_x ds_x d\xi~, \\
&= \int_{\mT_h^\xi}\int_{\mE_x} \left(-\frac{1}{2}[\xi{f_h^2}]_x + \{f_h\xi\}_x\cdot[f_h]_x +\frac{1}{2}|\xi\cdot\bn_x|[f_h]_x\cdot [f_h]_x \right)ds_xd\xi~,\\
&= \int_{\mT_h^\xi}\int_{\mE_x} \left( (-\frac{1}{2}[{f_h^2}]_x + \{f_h\}_x [f_h]_x)\cdot\xi +\frac{1}{2}|\xi\cdot\bn_x||[f_h]_x|^2 \right)ds_xd\xi~,\\
&= \frac{1}{2}\int_{\mT_h^\xi}\int_{\mE_x} |\xi\cdot\bn_x||[f_h]_x|^2 ds_xd\xi~,
\end{align*}
where the fourth equality uses the definition of the numerical flux \eqref{eq:flux:1} and the last one is due to \eqref{eq:equality:0}. Similarly,
\begin{align*}
R_2&=- \int_{\mT_h^x}\sum_{\Kxi\in\mT_h^\xi}\int_\Kxi (\bE_h+\xi\times\bB_h)\cdot\nabla_\xi \left(\frac{f_h^2}{2}\right) d\xi d\bx+\int_{\mT_h^x}\int_{\mE_\xi} \widetilde{f_h (\bE_h+\xi\times \bB_h)}\cdot [f_h]_\xi ds_\xi d\bx~,\\
%&=- \int_{\mT_h^x}\sum_{\Kxi\in\mT_h^\xi}\int_{\df\Kxi} \xi\times\bB_h\cdot\bn_\xi \left(\frac{f_h^2}{2}\right) ds_\xi d\bx+\int_{\mT_h^x}\int_{\mE_\xi} \widetilde{f_h \xi\times \bB_h}\cdot [f_h]_\xi ds_\xi d\bx~,\\
&=\int_{\mT_h^x}\int_{\mE_\xi} \left(-\frac{1}{2}[(\bE_h+\xi\times\bB_h) f_h^2]_\xi + \{f_h(\bE_h+\xi\times\bB_h)\}_\xi\cdot [f_h]_\xi+\frac{1}{2}|(\bE_h+\xi\times\bB_h)\cdot\bn_\xi|[f_h]_\xi\cdot [f_h]_\xi\right) ds_\xi d\bx~,\\
&=\int_{\mT_h^x}\int_{\mE_\xi} \left( (-\frac{1}{2}[f_h^2]_\xi + \{f_h\}_\xi\cdot [f_h]_\xi)\cdot (\bE_h+\xi\times\bB_h) +\frac{1}{2}|(\bE_h+\xi\times\bB_h)\cdot\bn_\xi||[f_h]_\xi|^2\right) ds_\xi d\bx~,\\
&=\frac{1}{2}\int_{\mT_h^x}\int_{\mE_\xi} |(\bE_h+\xi\times\bB_h)\cdot\bn_\xi||[f_h]_\xi|^2
ds_\xi d\bx~, 
\end{align*}
where the second equality is due to $\nabla_\xi\cdot(\bE_h+\xi\times\bB_h)=0$ and the definition of the numerical flux in \eqref{eq:flux:3}, and the third equality  uses  \eqref{eq:equality:0} and $\bE_h+\xi\times\bB_h$ being continuous in $\xi$. With \eqref{eq:L2Stab:1}, we conclude $L^2$ stability \eqref{eq:L2Stab}.
\end{proof}

\section{Error Estimates}
\label{error}

In this section, we  establish  error estimates at any given time $T>0$ for our semi-discrete DG methods described in  Section \ref{sec:DGmethod}. It is assumed that the discrete  spaces have the same degree, i.e.,  $k=r$, and that the exact solution satisfies $f \in C^1([0,T]; H^{k+1}(\Omega)\cap W^{1,\infty}(\Omega))$ and $\bE,\; \bB \in C^0([0,T]; [H^{k+1}(\Ox)]^{d_x}\cap [W^{1,\infty}(\Ox)]^{d_x})$. Also, periodic  boundary conditions in $\bx$  and  compact support for $f$ in $\xi$ are assumed.   To prevent the proliferation of constants,  we use $A\lesssim B$ to represent the inequality $A\leq (\textrm{constant}) B$, where the positive constant is independent of the mesh size $h$, $h_x$, and $h_\xi$, but  it can depend on the polynomial degree $k$, mesh parameters $\sigma_0, \sigma_x$ and $\sigma_\xi$, and domain parameters $L_x$ and $L_\xi$.

%Take $\mG_h^k=\mG_h^{k,(1)}$ or  $\mG_h^{k,(2)}$.
Defining  $\zeta_h=\Pi^k f-f$ and $\veps_h=\Pi^k f-f_h$, it follows that  $f-f_h=\veps_h-\zeta_h$.  
Analogously, if $\bzeta_h^E=\bPi_x^k \bE-\bE$, $\bzeta_h^B=\bPi_x^k \bB-\bB$,  $\bveps_h^E=\bPi_x^k\bE-\bE_h$ and $\bveps_h^B=\bPi_x^k\bB-\bB_h$, then $\bE-\bE_h=\bveps^E_h-\bzeta^E_h$ and $\bB-\bB_h=\bveps^B_h-\bzeta^B_h$. With the approximation  results of Lemma \ref{lem:appr}, we have 
\begin{equation}
||\zeta_h||_{0,\Omega}\lesssim h^{k+1} ||f||_{k+1,\Omega},\qquad ||\bzeta_h^B||_{0,\Ox}\lesssim h_x^{k+1} ||\bB ||_{k+1,\Ox},\qquad ||\bzeta_h^E||_{0,\Ox}\lesssim h_x^{k+1} ||\bE ||_{k+1,\Ox}~; 
\label{eq:110}
\end{equation}
therefore, we only need to estimate $\veps_h$, $\bveps^E_h$ and $\bveps^B_h$. The remainder of this section is organized as follows:  we first  state Lemmas \ref{lem:vepsh} and \ref{lem:vepshEB}, with which the main error estimate is established in Theorem \ref{thm:mainresult} for the proposed semi-discrete DG method with the upwind numerical fluxes.
Then,  the  proofs of Lemmas \ref{lem:vepsh} and \ref{lem:vepshEB} will be given in subsections \ref{sec:err:part1} and \ref{sec:err:part2}.  Lastly,  for the proposed method using the central or alternating flux of  \eqref{eq:flux:6}-\eqref{eq:flux:7} for the Maxwell solver,   error estimates are given in Theorem  \ref{thm:error:2}.

%%%%%%%%%%%%%%%%
% lemma: Estimate of $\veps_h$
%%%%%%%%%%%%%%%%%
\begin{lem} [{Estimate of $\veps_h$}] Based on the semi-discrete DG discretization for the Vlasov equation of  \eqref{eq:scheme:1:a} with the upwind flux \eqref{eq:flux:1}-\eqref{eq:flux:3}, we have
\begin{align}
\frac{d}{dt}&\left(\int_{\mT_h} |\veps_h|^2 d\bx d\xi\right)
+\int_{\mT_h^\xi}\int_{\mE_x} |\xi\cdot\bn_x||[\veps_h]_x|^2 ds_x d\xi
+\int_{\mT_h^x}\int_{\mE_\xi} (|(\bE_h+\xi\times\bB_h)\cdot\bn_\xi|)||[\veps_h]_\xi|^2 ds_\xi d\bx\notag\\
\lesssim
& \left(h^{k+1} \hat{\Lambda} +h^k||f||_{k+1,\Omega}(||\bveps^E_h ||_{0,\infty,\Ox}+||\bveps^B_h ||_{0,\infty,\Ox})+ |f|_{1,\infty,\Omega}(||\bveps^E_h||_{0,\Ox}+||\bveps^B_h||_{0,\Ox}) \right) ||\veps_h||_{0,\Omega}\notag\\
& + h^{k+\frac{1}{2}} ||f||_{k+1,\Omega} \left(||\bveps^B_h||_{0,\infty,\Ox}^{1/2}+||\bveps^E_h||_{0,\infty,\Ox}^{1/2}+||\bB||_{0,\infty,\Ox}^{1/2}+||\bE||_{0,\infty,\Ox}^{1/2}\right)\notag\\
&\hspace{1 cm} \times \left(\int_{\mT_h^x}\int_{\mE_\xi}  |(\bE_h+\xi\times\bB_h)\cdot\bn_\xi| |[\veps_h]|^2  ds_\xi dx\right)^{1/2}\notag\\
&+ h^{k+\frac{1}{2}}  ||f||_{k+1,\Omega} \left(\int_{\mT_h^\xi}\int_{\mE_x} |\xi\cdot\bn_x|
 |[\veps_h]_x|^2  ds_xd\xi\right)^{1/2}~,
\label{eq:lem:vepsh}
\end{align}
with
\begin{align*}
\hat{\Lambda}=||\df_t f||_{k+1,\Omega}&+\left(1+||\bE||_{1,\infty,\Ox}
+||\bB||_{1,\infty,\Ox} \right)||f||_{k+1,\Omega}\\
&+ \left(||\bE||_{k+1,\Ox}+||\bB||_{k+1,\Ox}\right) |f|_{1,\infty,\Omega}~.
\end{align*}
\label{lem:vepsh}
\end{lem}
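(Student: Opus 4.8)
The plan is to derive an evolution equation for $\|\veps_h\|_{0,\Omega}^2$ by exploiting consistency of the scheme, in the spirit of the $L^2$-stability proof (Lemma \ref{lem:stability}), but now tracking the projection errors that appear when one compares the discrete and exact solutions. Since $a_h$ is linear in its first argument and $f_h$, the exact $f$, and the interpolant $\Pi^k f$ all satisfy (or approximately satisfy) the scheme, I would write, using $\veps_h = \Pi^k f - f_h$ and $\veps_h = (f - f_h) + (\Pi^k f - f) = (f-f_h) + \zeta_h$, the key identity
\[
a_{h,1}(\veps_h;g) + \bigl(a_{h,2}(f_h,\bE_h,\bB_h;g) - a_{h,2}(f,\bE,\bB;g)\bigr) = a_{h,1}(\zeta_h;g) + \bigl(\text{terms from }a_{h,2}\text{ with }\zeta_h\bigr),
\]
for all $g \in \mG_h^k$, obtained by subtracting $a_h(f,\bE,\bB;g)=0$ from $a_h(f_h,\bE_h,\bB_h;g)=0$ and inserting $\Pi^k f$. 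Then I would choose $g=\veps_h$. The left-hand side splits into: (i) the Vlasov-transport part $a_{h,1}(\veps_h;\veps_h)$, which by the algebra already carried out in Lemma \ref{lem:stability} produces $\tfrac12\tfrac{d}{dt}\|\veps_h\|_{0,\Omega}^2$ plus the favorable jump dissipation $\tfrac12\int_{\mT_h^\xi}\!\int_{\mE_x}|\xi\cdot\bn_x|\,|[\veps_h]_x|^2$; and (ii) the force part, where I would split $a_{h,2}(f_h,\bE_h,\bB_h;\veps_h) - a_{h,2}(f,\bE,\bB;\veps_h)$ into a piece linear in $\veps_h$ with coefficients depending on the \emph{numerical} field $\bE_h+\xi\times\bB_h$ (this yields, again via the \eqref{eq:equality:0} manipulation and $\nabla_\xi\cdot(\bE_h+\xi\times\bB_h)=0$, the second favorable dissipation term $\tfrac12\int_{\mT_h^x}\!\int_{\mE_\xi}|(\bE_h+\xi\times\bB_h)\cdot\bn_\xi|\,|[\veps_h]_\xi|^2$) plus a remainder driven by the field error $(\bE-\bE_h)+\xi\times(\bB-\bB_h) = (\bveps_h^E - \bzeta_h^E) + \xi\times(\bveps_h^B-\bzeta_h^B)$ acting on $f$ (or $\Pi^k f$).

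Next I would bound the right-hand side term by term. The $a_{h,1}(\zeta_h;\veps_h)$ contribution and the analogous $\zeta_h$-terms from $a_{h,2}$ are handled by the approximation estimates of Lemma \ref{lem:appr} (both volume and trace bounds), the inverse inequalities of Lemma \ref{lem:inverse} where a $\nabla_\xi \veps_h$ or $\nabla_\bx\veps_h$ needs to be absorbed, and Cauchy--Schwarz/Young; a standard trick here is to use the orthogonality of the $L^2$ projection to kill the leading volume term $\int_{\mT_h}\zeta_h\,\xi\cdot\nabla_\bx\veps_h$ (since $\xi\cdot\nabla_\bx\veps_h$ is a polynomial of the right degree in a tensor-product or $P^k$ space — this is where the precise choice of space matters and why the argument is stated for all three spaces), leaving only the edge terms, which are $O(h^{k+1/2})$ against the trace of $\veps_h$. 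The terms involving $\zeta_h$ on the edges $\mE_x$ and $\mE_\xi$ get paired with $[\veps_h]_x$ and $[\veps_h]_\xi$ respectively and, after weighting by $|\xi\cdot\bn_x|^{1/2}$ or $|(\bE_h+\xi\times\bB_h)\cdot\bn_\xi|^{1/2}$ and Young's inequality, a small multiple of the dissipation terms is absorbed into the left side while the rest contributes the $h^{k+1/2}\|f\|_{k+1}(\cdots)^{1/2}$ terms with the $\|\bB\|_{0,\infty}^{1/2}+\|\bE\|_{0,\infty}^{1/2}$ and $\|\bveps_h^{E/B}\|_{0,\infty}^{1/2}$ factors visible in the statement (the field-magnitude factors enter through bounding $|(\bE_h+\xi\times\bB_h)\cdot\bn_\xi|^{1/2}$ via $|\bE_h|\le|\bveps_h^E|+|\bzeta_h^E|+|\bE|$ and similarly for $\bB$, together with $|\xi|\le L_\xi$ on $\Oxi$). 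Finally, the field-error remainder $\int_{\mT_h} f\,\bigl((\bveps_h^E-\bzeta_h^E)+\xi\times(\bveps_h^B-\bzeta_h^B)\bigr)\cdot\nabla_\xi\veps_h$ plus its edge counterpart is estimated using $|f|_{1,\infty,\Omega}$ and $\|f\|_{k+1,\Omega}$ for the parts where $\nabla_\xi$ is moved back onto $f$ or a trace estimate is used, producing the $h^k\|f\|_{k+1}(\|\bveps_h^E\|_{0,\infty}+\|\bveps_h^B\|_{0,\infty})$ and $|f|_{1,\infty}(\|\bveps_h^E\|_0+\|\bveps_h^B\|_0)$ terms; the $\hat\Lambda$ factor collects all remaining consistency residuals ($\partial_t f$, zeroth-order $f$, and the $(\|\bE\|_{k+1}+\|\bB\|_{k+1})|f|_{1,\infty}$ piece from the projected-field terms).

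The main obstacle, and the step I would be most careful about, is the treatment of the force/edge terms in $a_{h,2}$ involving the \emph{numerical} field $\bE_h+\xi\times\bB_h$ on $\mE_\xi$: one cannot a priori bound $\|\bE_h\|_{L^\infty}$, $\|\bB_h\|_{L^\infty}$ independently of the solution, so the upwind weight $|(\bE_h+\xi\times\bB_h)\cdot\bn_\xi|$ must be kept intact and used to generate dissipation rather than estimated away — this forces the somewhat unusual structure of the right-hand side of \eqref{eq:lem:vepsh}, where those square-root-weighted dissipation integrals appear explicitly so they can be absorbed by the left side at the Gronwall stage (carried out later, in Theorem \ref{thm:mainresult}, in combination with Lemma \ref{lem:vepshEB}). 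A secondary subtlety is ensuring that every polynomial-degree count is compatible with whichever of the three approximation spaces \eqref{eq:sp:f}, \eqref{eq:sp:f:1}, \eqref{eq:sp:f:3} is in use, so that the $L^2$-projection orthogonality can be invoked to discard the leading volume consistency terms; I would state this once and note it holds uniformly for all three choices.
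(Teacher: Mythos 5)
Your proposal follows essentially the same route as the paper: form the error equation from consistency, insert $\Pi^k f$, test with $\veps_h$, recover the time derivative plus the two upwind dissipation terms exactly as in the $L^2$-stability lemma, and bound the $\zeta_h$- and field-error contributions with the projection estimates, inverse inequalities, and Cauchy--Schwarz so that the square-root-weighted edge terms can later be absorbed into the dissipation. One refinement: the volume term $\int_{\mT_h}\zeta_h\,\xi\cdot\nabla_\bx\veps_h$ is killed by orthogonality alone only for the pure $P^k$ space (for the tensor-product spaces $\xi_j\,\partial_{x_i}\veps_h$ exceeds degree $k$ in $\xi$, and the analogous force term $(\bE_h+\xi\times\bB_h)\cdot\nabla_\xi\veps_h$ is never in $\mG_h^k$ since $\bE_h,\bB_h$ are degree-$k$ polynomials), so the paper instead subtracts the piecewise-constant projections $\xi_0$, $\bE_0$, $\bB_0$ of the transport coefficients, annihilates only that part by orthogonality, and controls the $O(h)$-small remainder with an inverse inequality --- which is precisely the mechanism that produces the $h^k||f||_{k+1,\Omega}(||\bveps_h^E||_{0,\infty,\Ox}+||\bveps_h^B||_{0,\infty,\Ox})$ term you correctly anticipate.
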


%%%%%%%%%%%%%%%%
% lemma: Estimate of $\bveps^E_h$ and $\bveps^B_h$
%%%%%%%%%%%%%%%%%
\begin{lem}[{Estimate of $\bveps^E_h$ and $\bveps^B_h$}]
Based on the semi-discrete DG discretization for the Maxwell equations of  \eqref{eq:scheme:2:a} with the upwind flux \eqref{eq:flux:4}-\eqref{eq:flux:5}, we have
\begin{align}
&\frac{d}{dt}\int_{\mT_h^x} \left(|\bveps_h^E|^2+|\bveps_h^B|^2\right) d\bx
 +\int_{\mE_x}\left( |[\bveps_h^E]_\tau|^2+|[\bveps_h^B]_\tau|^2 \right) ds_x
\label{eq:lem:vepshEB}\\
&\lesssim (||\veps_h||_{0,\Omega}+h^{k+1}||f||_{k+1,\Omega})||\bveps_h^E||_{0,\Ox} + h_x^{k+\frac{1}{2}}(||\bE||_{k+1,\Ox}+||\bB||_{k+1,\Ox})\left(\int_{\mE_x} |[\bveps_h^E]_\tau|^2+|[\bveps_h^B]_\tau|^2  ds_x\right)^{1/2}~.\notag
\end{align}
\label{lem:vepshEB}
\end{lem}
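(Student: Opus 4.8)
The plan is to run the usual semidiscrete energy estimate on the error equation for the Maxwell part. I would start from consistency: the exact fields satisfy $b_h(\bE,\bB;\bU,\bV)=l_h(\bJ;\bU)$ and the numerical fields satisfy $b_h(\bE_h,\bB_h;\bU,\bV)=l_h(\bJ_h;\bU)$ for all $\bU,\bV\in\mU_h^k$ (recall $k=r$ here). Since the upwind fluxes \eqref{eq:flux:4}--\eqref{eq:flux:5} are linear in $\bE_h,\bB_h$, the form $b_h$ is bilinear in $(\bE_h,\bB_h)$ and $(\bU,\bV)$, and $l_h$ is linear in its first slot, so subtracting gives $b_h(\bE-\bE_h,\bB-\bB_h;\bU,\bV)=l_h(\bJ-\bJ_h;\bU)$. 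Writing $\bE-\bE_h=\bveps_h^E-\bzeta_h^E$, $\bB-\bB_h=\bveps_h^B-\bzeta_h^B$ and taking $\bU=\bveps_h^E$, $\bV=\bveps_h^B$, this becomes
\[
b_h(\bveps_h^E,\bveps_h^B;\bveps_h^E,\bveps_h^B)=b_h(\bzeta_h^E,\bzeta_h^B;\bveps_h^E,\bveps_h^B)+l_h(\bJ-\bJ_h;\bveps_h^E).
\]
The left-hand side is evaluated exactly as in Step~2 of the proof of Lemma~\ref{lem:EneC}: integrating the curl terms by parts and using \eqref{eq:flux:4}--\eqref{eq:flux:5} together with \eqref{eq:equality:1} gives $b_h(\bveps_h^E,\bveps_h^B;\bveps_h^E,\bveps_h^B)=\frac12\frac{d}{dt}\int_{\mT_h^x}(|\bveps_h^E|^2+|\bveps_h^B|^2)\,d\bx+\frac12\int_{\mE_x}(|[\bveps_h^E]_\tau|^2+|[\bveps_h^B]_\tau|^2)\,ds_x$, i.e. one half of the left-hand side of \eqref{eq:lem:vepshEB}.

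For the source term I would write $\bJ-\bJ_h=\int_{\mT_h^\xi}(f-f_h)\,\xi\,d\xi=\int_{\mT_h^\xi}(\veps_h-\zeta_h)\,\xi\,d\xi$; since $\Oxi$ is bounded, $|\xi|$ is bounded there, so Cauchy--Schwarz in $\xi$ together with \eqref{eq:110} yields $\|\bJ-\bJ_h\|_{0,\Ox}\lesssim\|\veps_h-\zeta_h\|_{0,\Omega}\lesssim\|\veps_h\|_{0,\Omega}+h^{k+1}\|f\|_{k+1,\Omega}$. Hence $|l_h(\bJ-\bJ_h;\bveps_h^E)|\le\|\bJ-\bJ_h\|_{0,\Ox}\|\bveps_h^E\|_{0,\Ox}$ gives the first term on the right of \eqref{eq:lem:vepshEB}.

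The term $b_h(\bzeta_h^E,\bzeta_h^B;\bveps_h^E,\bveps_h^B)$ is where the structure of the method matters. Because $k=r$, the projection errors $\bzeta_h^E=\bPi_x^k\bE-\bE$ and $\bzeta_h^B=\bPi_x^k\bB-\bB$ — and also $\df_t\bzeta_h^E=\bPi_x^k(\df_t\bE)-\df_t\bE$, since $\bPi_x^k$ commutes with $\df_t$ and $\df_t\bE,\df_t\bB\in[H^k(\Ox)]^{d_x}$ by \eqref{eq:max:2} — are $L^2(\Kx)$-orthogonal to $[P^k(\Kx)]^{d_x}$ on every element. On the other hand $\bveps_h^E,\bveps_h^B\in\mU_h^k$ and, crucially, $\nabla\times\bveps_h^E,\nabla\times\bveps_h^B$ are elementwise polynomials of degree $\le k$ as well. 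Therefore every volume integral in $b_h(\bzeta_h^E,\bzeta_h^B;\bveps_h^E,\bveps_h^B)$ vanishes, leaving only the edge terms $-\int_{\mE_x}\widetilde{\bzeta_h^B}\cdot[\bveps_h^E]_\tau\,ds_x+\int_{\mE_x}\widetilde{\bzeta_h^E}\cdot[\bveps_h^B]_\tau\,ds_x$, where $\widetilde{\bzeta_h^E},\widetilde{\bzeta_h^B}$ denote the same average/jump combinations \eqref{eq:flux:4}--\eqref{eq:flux:5} built from the projection errors. These I would bound by Cauchy--Schwarz on $\mE_x$ and the trace part of Lemma~\ref{lem:appr}: since $\widetilde{\bzeta_h^E},\widetilde{\bzeta_h^B}$ are controlled by the traces of the projection errors, $\|\widetilde{\bzeta_h^E}\|_{0,\mE_x}\lesssim h_x^{k+1/2}\|\bE\|_{k+1,\Ox}$ and similarly for $\bB$ (the mesh and shape-regularity assumptions allow replacing the elementwise $h_\Kx$ by $h_x$ in the sum over elements). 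This produces the second term on the right of \eqref{eq:lem:vepshEB}; collecting the pieces and absorbing the factor $\frac12$ finishes the proof.

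The argument is otherwise routine, so I expect the only delicate point to be exactly this Galerkin-orthogonality cancellation of the volume terms in $b_h(\bzeta_h^E,\bzeta_h^B;\bveps_h^E,\bveps_h^B)$: it is the reason the scheme uses $L^2$ projections with matching polynomial degrees $k=r$, and it relies on the two small observations that $\df_t$ commutes with $\bPi_x^k$ (so the time-derivative terms are again projection errors of an $H^k$ function) and that the curl does not raise the polynomial degree. Once those are secured, what remains is bookkeeping with Cauchy--Schwarz and the approximation and trace bounds of Lemma~\ref{lem:appr}.
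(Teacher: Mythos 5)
Your proposal is correct and follows essentially the same route as the paper: consistency gives the error equation, testing with $(\bveps_h^E,\bveps_h^B)$ and reusing Step~2 of Lemma~\ref{lem:EneC} yields the energy identity on the left, the volume terms in $b_h(\bzeta_h^E,\bzeta_h^B;\bveps_h^E,\bveps_h^B)$ vanish by $L^2$-orthogonality (using $\partial_t\bPi_x^k=\bPi_x^k\partial_t$ and $\nabla\times\bveps_h^{E,B}\in\mU_h^k$), and the remaining edge and source terms are bounded exactly as you describe via Cauchy--Schwarz, the trace estimate of Lemma~\ref{lem:appr}, and $\|\bJ-\bJ_h\|_{0,\Ox}\le\|f-f_h\|_{0,\Omega}\|\xi\|_{0,\Oxi}$. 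No gaps.
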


\begin{thm}[{Error estimate 1}]  For $k\geq 2$, the semi-discrete DG method  of \eqref{eq:scheme:1:a}-\eqref{eq:scheme:2:a},  for the Vlasov-Maxwell equations with the upwind fluxes of \eqref{eq:flux:1}-\eqref{eq:flux:5},  has the following error estimate
\begin{equation}
||(f-f_h)(t)||^2_{0,\Omega}+||(\bE-\bE_h)(t)||^2_{0,\Ox}+||(\bB-\bB_h)(t)||^2_{0,\Ox} \leq C h^{2k+1}, \qquad \forall\; t\in [0,T]~.
\label{eq:mainresult}
\end{equation}
Here the constant $C$ depends on the upper bounds of $||\df_t f||_{k+1,\Omega}$, $||f||_{k+1,\Omega}$, $|f|_{1,\infty,\Omega}$, $||\bE||_{1,\infty,\Ox}$, $||\bB||_{1,\infty,\Ox}$, $||\bE||_{k+1,\Ox}$, $||\bB||_{k+1,\Ox}$ over the time interval $ [0, T]$, and  it also depends on the polynomial degree $k$, mesh parameters $\sigma_0, \sigma_x$ and $\sigma_\xi$, and domain parameters $L_x$ and $L_\xi$.
\label{thm:mainresult}
\end{thm}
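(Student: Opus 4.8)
The plan is to combine the two a~priori estimates of Lemmas~\ref{lem:vepsh} and~\ref{lem:vepshEB} into a single Gronwall-type differential inequality for the error energy
\[
\mathcal{E}_h(t):=\|\veps_h(t)\|_{0,\Omega}^2+\|\bveps_h^E(t)\|_{0,\Ox}^2+\|\bveps_h^B(t)\|_{0,\Ox}^2 .
\]
Adding \eqref{eq:lem:vepsh} and \eqref{eq:lem:vepshEB}, the left-hand side becomes $\tfrac{d}{dt}\mathcal E_h(t)$ plus the three nonnegative ``dissipation'' integrals: $\int_{\mT_h^\xi}\int_{\mE_x}|\xi\cdot\bn_x||[\veps_h]_x|^2$, $\int_{\mT_h^x}\int_{\mE_\xi}|(\bE_h+\xi\times\bB_h)\cdot\bn_\xi||[\veps_h]_\xi|^2$, and $\int_{\mE_x}(|[\bveps_h^E]_\tau|^2+|[\bveps_h^B]_\tau|^2)$. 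The crucial structural point is that each factor on the right-hand sides that equals the square root of one of these integrals can be split off by Young's inequality $ab\le \epsilon a^2+\tfrac1{4\epsilon}b^2$ and, for $\epsilon$ chosen small, absorbed into the matching integral on the left; everything left over will be controlled by $\mathcal E_h$ plus explicit powers of $h$, after which Gronwall's lemma closes the argument.

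First I would dispose of the purely volumetric products, namely $h^{k+1}\hat\Lambda\,\|\veps_h\|_{0,\Omega}$, $|f|_{1,\infty,\Omega}(\|\bveps_h^E\|_{0,\Ox}+\|\bveps_h^B\|_{0,\Ox})\|\veps_h\|_{0,\Omega}$ and $(\|\veps_h\|_{0,\Omega}+h^{k+1}\|f\|_{k+1,\Omega})\|\bveps_h^E\|_{0,\Ox}$, using Young's inequality; since $\hat\Lambda$, $|f|_{1,\infty,\Omega}$, $\|f\|_{k+1,\Omega}$ and the Sobolev norms of $\bE,\bB$ are bounded on $[0,T]$ (this is what the constant $C$ collects), these contribute $\lesssim \mathcal E_h(t)+h^{2k+2}$. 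The delicate terms are those carrying an $L^\infty$-norm of $\bveps_h^E$ or $\bveps_h^B$: the product $h^{k}\|f\|_{k+1,\Omega}(\|\bveps_h^E\|_{0,\infty,\Ox}+\|\bveps_h^B\|_{0,\infty,\Ox})\|\veps_h\|_{0,\Omega}$ and, after the Young step on the square-root factors, the residues $h^{2k+1}\|f\|_{k+1,\Omega}^2(\|\bveps_h^E\|_{0,\infty,\Ox}+\|\bveps_h^B\|_{0,\infty,\Ox})$. For these I would invoke the inverse inequality of Lemma~\ref{lem:inverse}, $\|\bveps_h^E\|_{0,\infty,\Ox}\lesssim h_x^{-d_x/2}\|\bveps_h^E\|_{0,\Ox}$; the mesh hypotheses $h_x/h_{\xi,\min}\le\sigma_0$, $h_\xi/h_{x,\min}\le\sigma_0$ force $h_x\sim h_\xi\sim h$, so with $d_x=3$ one gets $h^{k}h_x^{-d_x/2}\lesssim h^{k-3/2}$, which is bounded precisely because $k\ge2$. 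Hence the first product is $\lesssim \mathcal E_h(t)$, and each residue becomes $\lesssim h^{2k-1/2}\|\bveps_h^E\|_{0,\Ox}\lesssim h^{4k-1}+\mathcal E_h(t)\lesssim h^{2k+1}+\mathcal E_h(t)$ after one more Young step (using $4k-1\ge 2k+1$ and $h\le1$). The remaining square-root factors, multiplied by $\|\bE\|_{0,\infty,\Ox}^{1/2}$, $\|\bB\|_{0,\infty,\Ox}^{1/2}$, or $h_x^{k+\frac12}(\|\bE\|_{k+1,\Ox}+\|\bB\|_{k+1,\Ox})$, give after Young $\lesssim \epsilon\,(\text{dissipation})+h^{2k+1}$, the dissipation part again being absorbed on the left.

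Collecting all of this and fixing $\epsilon$ small enough that the dissipation integrals produced on the right are strictly dominated by those on the left, I obtain
\[
\frac{d}{dt}\mathcal E_h(t)\ \lesssim\ \mathcal E_h(t)+h^{2k+1},\qquad t\in[0,T].
\]
Taking the natural $L^2$-projection initialization $f_h(0)=\Pi^k f_0$, $\bE_h(0)=\bPi_x^k\bE_0$, $\bB_h(0)=\bPi_x^k\bB_0$ gives $\mathcal E_h(0)=0$, so Gronwall's inequality yields $\mathcal E_h(t)\lesssim h^{2k+1}$ for every $t\in[0,T]$. Finally, writing $f-f_h=\veps_h-\zeta_h$, $\bE-\bE_h=\bveps_h^E-\bzeta_h^E$, $\bB-\bB_h=\bveps_h^B-\bzeta_h^B$ and using the triangle inequality together with the projection bounds \eqref{eq:110} (which are $O(h^{2k+2})$ once squared) converts $\mathcal E_h(t)\lesssim h^{2k+1}$ into the claimed estimate \eqref{eq:mainresult}. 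I expect the main obstacle to be exactly the handling of the $L^\infty$-norms of $\bveps_h^E,\bveps_h^B$ in Lemma~\ref{lem:vepsh}: unlike the other error quantities they cannot be absorbed into $\mathcal E_h$ directly and must be traded, through the inverse inequality, for negative powers of $h_x$; checking that these are compensated is precisely where $k\ge2$ (together with $d_x=3$ and the quasi-uniformity encoded in $\sigma_0$) enters, and one must track the exponents carefully so that every leftover power of $h$ is at least $h^{2k+1}$.
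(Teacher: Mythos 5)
Your proposal is correct and follows essentially the same route as the paper: sum the two lemmas, absorb the square-root jump terms into the nonnegative dissipation integrals via Young/Cauchy--Schwarz, trade the $L^\infty$ norms of $\bveps_h^E,\bveps_h^B$ for $L^2$ norms through the inverse inequality (which is exactly where $k\ge 2$ and $d_x=3$ enter, via $h^{2k-d_x}$ bounded), and close with Gronwall, the triangle inequality, and the projection estimates \eqref{eq:110}. Your explicit treatment of the initialization $\mathcal{E}_h(0)=0$ and the exponent bookkeeping $4k-1\ge 2k+1$ are consistent with (and slightly more detailed than) the paper's argument.
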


\begin{proof}
With several applications of Cauchy-Schwarz inequality and
\begin{equation*}
\widetilde{\Lambda}=h^{1/2}\hat{\Lambda}+||f||_{k+1,\Omega}\left(1+ ||\bE||^{1/2}_{0,\infty,\Ox}+||\bB||^{1/2}_{0,\infty,\Ox}\right)~,
\end{equation*}
 Eq.~\eqref{eq:lem:vepsh} becomes
\begin{align*}
\frac{d}{dt}
&\left(\int_{\mT_h} |\veps_h|^2 d\bx d\xi\right)\\
\leq
& c \left(h^{2k+1} \widetilde{\Lambda}^2 +(h^k||f||_{k+1,\Omega}(||\bveps^E_h ||_{0,\infty,\Ox}+||\bveps^B_h ||_{0,\infty,\Ox})+ |f|_{1,\infty,\Omega}(||\bveps^E_h||_{0,\Ox}+||\bveps^B_h||_{0,\Ox}) )^2 \right.\\
&\left.+ h^{2k+1} ||f||^2_{k+1,\Omega} (||\bveps^E_h||_{0,\infty,\Ox}+||\bveps^B_h||_{0,\infty,\Ox})\right)+ ||\veps_h||^2_{0,\Omega}~\\
\leq & c \left( h^{2k+1} \widetilde{\Lambda}^2 + h^{2k}(1+h)||f||^2_{k+1,\Omega}(||\bveps^E_h ||^2_{0,\infty,\Ox}+||\bveps^B_h ||^2_{0,\infty,\Ox})+ |f|^2_{1,\infty,\Omega}(||\bveps^E_h||^2_{0,\Ox}+||\bveps^B_h||^2_{0,\Ox})\right)\notag\\
&\;\; + ||\veps_h||^2_{0,\Omega}~.
\end{align*}
Here and below, the constant $c>0$ only depends on $k$, mesh parameters $\sigma_0, \sigma_x$ and $\sigma_\xi$, and domain parameters $L_x$ and $L_\xi$.
Moreover, with the inverse inequality of  Lemma \ref{lem:inverse},  and $\frac{h_\xi}{h_{x,\min}}$ being uniformly bounded by $\sigma_0$ when the mesh is refined, we have 
\begin{equation}
h^{2k}(||\bveps^E_h ||^2_{0,\infty,\Ox}+||\bveps^B_h ||^2_{0,\infty,\Ox})\leq c h^{2k-d_x}(||\bveps^E_h ||^2_{0,\Ox}+||\bveps^B_h ||^2_{0,\Ox})
\label{eq:1001}
\end{equation}
and this leads to
\begin{align}
\frac{d}{dt}
&\left(\int_{\mT_h} |\veps_h|^2 d\bx d\xi\right) \label{eq:101}\\
\leq & c \left( h^{2k+1} \widetilde{\Lambda}^2 + (h^{2k-d_x}(1+h)||f||^2_{k+1,\Omega}+ |f|^2_{1,\infty,\Omega})(||\bveps^E_h||^2_{0,\Ox}+||\bveps^B_h||^2_{0,\Ox})\right) + ||\veps_h||^2_{0,\Omega}~.\notag
\end{align}
Recall $d_x=3$, then for $k\geq 2$, there is $2k-d_x\geq 0$ and therefore $h^{2k-d_x}<\infty$.
Similarly, with the Cauchy-Schwarz inequality, \eqref{eq:lem:vepshEB} becomes
\begin{align}
\frac{d}{dt}
&\int_{\mT_h^x} \left(|\bveps_h^E|^2+|\bveps_h^B|^2\right) d\bx\label{eq:102}\\
\leq
& c \left(||\veps_h||^2_{0,\Omega} + h^{2k+2}||f||^2_{k+1,\Omega}+h_x^{2k+1}(||\bE||^2_{k+1,\Ox}+||\bB||^2_{k+1,\Ox})\right)+ ||\bveps_h^E||^2_{0,\Ox}~.\notag
\end{align}
Now,   summing  up \eqref{eq:101} and \eqref{eq:102}, we get
\begin{equation*}
\frac{d}{dt}\left(\int_{\mT_h} |\veps_h|^2 d\bx d\xi +\int_{\mT_h^x} |\bveps_h^E|^2+|\bveps_h^B|^2 d\bx \right)
\leq \Lambda h^{2k+1} + \Theta\left(\int_{\mT_h} |\veps_h|^2 d\bx d\xi +\int_{\mT_h^x} |\bveps_h^E|^2+|\bveps_h^B|^2 d\bx\right)~.
\end{equation*}
Here $\Lambda$ depends on $(f, \bE, \bB)$ in their Sobolev norms $||\df_t f||_{k+1,\Omega}$, $||f||_{k+1,\Omega}$, $|f|_{1,\infty,\Omega}$, $||\bE||_{1,\infty,\Ox}$, $||\bB||_{1,\infty,\Ox}$, $||\bE||_{k+1,\Ox}$, $||\bB||_{k+1,\Ox}$
at time $t$, and $\Theta$ depends on $||f||_{k+1,\Omega}$ and $|f|_{1,\infty,\Omega}$ at time t. Both $\Lambda$ and $\Theta$ depend on the polynomial degree $k$, mesh parameters $\sigma_0, \sigma_x$ and $\sigma_\xi$, and domain parameters $L_x$ and $L_\xi$. Now with a standard application of  Gronwall's inequality, a triangle  inequality,  and the approximation results of  \eqref{eq:110}, we  conclude the error estimate \eqref{eq:mainresult}.
\end{proof}

\medskip

\begin{rem}
Theorem \ref{thm:mainresult} shows that the proposed methods are $(k+\frac{1}{2})$-th order accurate, which is standard for upwind DG methods applied to  hyperbolic problems on general meshes. The assumption on the polynomial degree $k\geq 2$ is due to the lack of the $L^\infty$ error estimate for the DG solutions to the Maxwell solver and the use of an inverse inequality in handling the nonlinear coupling (see \eqref{eq:1001}-\eqref{eq:102} in the proof of Theorem \ref{thm:mainresult}). If the computational domain in $\bx$ is one- or two-dimensional ($d_x=1$ or $2$), then  Theorem \ref{thm:mainresult} holds for $k\geq 1$.
\label{rem:aftermainresult}
\end{rem}

\bigskip

If the upwind numerical flux for the Maxwell solver \eqref{eq:scheme:2:a} is replaced by either the central or alternating flux \eqref{eq:flux:6}-\eqref{eq:flux:7}, we will have the estimates for
$\bveps^E_h$ and $\bveps^B_h$ in Lemma \ref{lem:vepshEB:2} instead, provided an additional assumption is made for the mesh when it is refined. That is, we need to assume there is a positive constant $\delta<1$ such that for any $\Kx\in\mT_h^x$,
\begin{equation}
\delta\leq \frac{h_{\Kx'}}{h_\Kx}\leq \frac{1}{\delta}
\label{eq:meshratio}
\end{equation}
where  $\Kx'$ is any element in $\mT_h^x$ satisfying $\Kx'\cap\Kx \ne \emptyset$.

%%%%%%%%%%%%%%%%
% lemma: Estimate of $\bveps^E_h$ and $\bveps^B_h$: non-upwinding flux
%%%%%%%%%%%%%%%%%
\begin{lem}[{Estimate of $\bveps^E_h$ and $\bveps^B_h$ with the non-upwinding flux}]
Based on the semi-discrete DG discretization for the Maxwell equations of \eqref{eq:scheme:2:a}, with either the central or alternating flux of  \eqref{eq:flux:6}-\eqref{eq:flux:7}, we have
\begin{align}
\frac{d}{dt}\int_{\mT_h^x} \left(|\bveps_h^E|^2+|\bveps_h^B|^2\right) d\bx
%\label{eq:lem:vepshEB:2}\\
\lesssim &(||\veps_h||_{0,\Omega}+h^{k+1}||f||_{k+1,\Omega})||\bveps_h^E||_{0,\Ox} \\
& + c(\delta)h_x^k(||\bE||_{k+1,\Ox}+||\bB||_{k+1,\Ox})\left(\int_{\mT_h^x}(|\bveps_h^E|^2+|\bveps_h^B|^2) d\bx\right)^{1/2}~.\notag
\end{align}
\label{lem:vepshEB:2}
\end{lem}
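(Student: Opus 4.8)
The plan is to follow the proof of the upwind estimate Lemma~\ref{lem:vepshEB}, replacing the energy identity of Lemma~\ref{lem:EneC} by its central/alternating-flux counterpart from Corollary~\ref{col:energy}, and compensating for the resulting loss of the dissipative interface term by an inverse trace inequality. First I would note that for the central and alternating fluxes of \eqref{eq:flux:6}--\eqref{eq:flux:7} the numerical flux is linear in $(\bE_h,\bB_h)$, so $b_h(\cdot,\cdot;\bU,\bV)$ is bilinear and consistency yields the error equation $b_h(\bE-\bE_h,\bB-\bB_h;\bU,\bV)=l_h(\bJ-\bJ_h;\bU)$. Writing $\bE-\bE_h=\bveps_h^E-\bzeta_h^E$ and $\bB-\bB_h=\bveps_h^B-\bzeta_h^B$ and taking $\bU=\bveps_h^E$, $\bV=\bveps_h^B$ gives
\[ b_h(\bveps_h^E,\bveps_h^B;\bveps_h^E,\bveps_h^B)=b_h(\bzeta_h^E,\bzeta_h^B;\bveps_h^E,\bveps_h^B)+l_h(\bJ-\bJ_h;\bveps_h^E). \]
Repeating Step~2 of the proof of Lemma~\ref{lem:EneC} with $(\bE_h,\bB_h)$ replaced by $(\bveps_h^E,\bveps_h^B)$ and the current term dropped, and invoking \eqref{eq:equality:1}--\eqref{eq:equality:2} as in Corollary~\ref{col:energy}, the left-hand side equals $\frac{1}{2}\frac{d}{dt}\int_{\mT_h^x}(|\bveps_h^E|^2+|\bveps_h^B|^2)\,d\bx$, with \emph{no} surviving interface term; this absence is exactly what will cost half an order of accuracy compared with the upwind case.

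Next I would simplify $b_h(\bzeta_h^E,\bzeta_h^B;\bveps_h^E,\bveps_h^B)$. Since $\bPi_x^k$ is the $L^2$ projection onto $\mU_h^k$ and commutes with $\partial_t$, the two volume time-derivative terms vanish by orthogonality against $\bveps_h^E,\bveps_h^B\in\mU_h^k$; and since $\nabla\times\bveps_h^E|_{\Kx}\in[P^{k-1}(\Kx)]^{d_x}\subseteq[P^k(\Kx)]^{d_x}$, the two volume curl terms vanish by the same orthogonality. Hence only the flux interface terms remain,
\[ b_h(\bzeta_h^E,\bzeta_h^B;\bveps_h^E,\bveps_h^B)=-\int_{\mE_x}\widetilde{\bzeta_h^B}\cdot[\bveps_h^E]_\tau\,ds_x+\int_{\mE_x}\widetilde{\bzeta_h^E}\cdot[\bveps_h^B]_\tau\,ds_x. \]
For the forcing term I would bound $|l_h(\bJ-\bJ_h;\bveps_h^E)|\le\|\bJ-\bJ_h\|_{0,\Ox}\|\bveps_h^E\|_{0,\Ox}$, and, writing $\bJ-\bJ_h=\int_{\Oxi}(\veps_h-\zeta_h)\xi\,d\xi$ and using boundedness of $\Oxi$, Cauchy--Schwarz in $\xi$, and \eqref{eq:110}, obtain $\|\bJ-\bJ_h\|_{0,\Ox}\lesssim\|\veps_h\|_{0,\Omega}+h^{k+1}\|f\|_{k+1,\Omega}$; this gives the first term of the claimed bound.

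The main obstacle is the interface terms: in the upwind proof they were absorbed into the dissipation $\int_{\mE_x}(|[\bveps_h^E]_\tau|^2+|[\bveps_h^B]_\tau|^2)\,ds_x$ available on the left, but here that term is gone, so I would trade each jump of $\bveps_h$ for a volume norm via the inverse trace inequality of Lemma~\ref{lem:inverse}, at the price of a factor $h^{-1/2}$. Concretely, on an edge $e=\Kx^+\cap\Kx^-$ one has $|\widetilde{\bzeta_h^B}|\le|(\bzeta_h^B)^+|+|(\bzeta_h^B)^-|$ for both fluxes and $|[\bveps_h^E]_\tau|\le|(\bveps_h^E)^+|+|(\bveps_h^E)^-|$; Cauchy--Schwarz on $e$, the trace bound $\|\bzeta_h^B\|_{0,\partial\Kx}\lesssim h_{\Kx}^{k+1/2}\|\bB\|_{k+1,\Kx}$ from Lemma~\ref{lem:appr}, the inverse trace bound $\|\bveps_h^E\|_{0,\partial\Kx}\lesssim h_{\Kx}^{-1/2}\|\bveps_h^E\|_{0,\Kx}$, and --- for the cross terms coupling data on $\Kx^+$ with data on $\Kx^-$ --- the local mesh-ratio hypothesis \eqref{eq:meshratio} to exchange $h_{\Kx^-}$ and $h_{\Kx^+}$ up to a factor $c(\delta)$, together show that each edge contributes at most $c(\delta)\,h_x^{k}(\|\bB\|_{k+1,\Kx^+}+\|\bB\|_{k+1,\Kx^-})(\|\bveps_h^E\|_{0,\Kx^+}+\|\bveps_h^E\|_{0,\Kx^-})$. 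Summing over $e\in\mE_x$ (each element belonging to a bounded number of faces) and applying the discrete Cauchy--Schwarz inequality gives $|b_h(\bzeta_h^E,\bzeta_h^B;\bveps_h^E,\bveps_h^B)|\lesssim c(\delta)\,h_x^{k}(\|\bE\|_{k+1,\Ox}+\|\bB\|_{k+1,\Ox})(\int_{\mT_h^x}(|\bveps_h^E|^2+|\bveps_h^B|^2)\,d\bx)^{1/2}$, the second term of the claim. Inserting these two bounds into the identity from the first paragraph and absorbing the factor $\frac12$ completes the proof. The delicate point is the mesh bookkeeping in this last step: one must lose exactly one power $h^{-1/2}$ uniformly over $\mE_x$ --- so that $h_x^{k+1/2}$ degrades only to $h_x^{k}$, not to something like $h_{x,\min}^{-1/2}h_x^{k+1/2}$ --- which is precisely why the additional hypothesis \eqref{eq:meshratio} on locally comparable element sizes must be imposed, whereas the upwind flux requires no such assumption.
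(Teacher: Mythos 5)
Your proposal is correct and follows essentially the same route as the paper: the same error equation with $\bU=\bveps_h^E$, $\bV=\bveps_h^B$, the same observation that the interface dissipation vanishes for the central/alternating fluxes, the same vanishing of the volume terms by $L^2$-orthogonality, and the same treatment of the surviving interface terms by combining the trace approximation bound $h_\Kx^{k+1/2}$ with the inverse trace inequality costing $h_\Kx^{-1/2}$ under the local mesh-ratio hypothesis \eqref{eq:meshratio}. The paper merely organizes the edge bookkeeping by inserting explicit weights $h_\Kx^{-1}$ and $h_\Kx$ into the two factors of the Cauchy--Schwarz inequality over $\mE_x$, which is equivalent to your element-by-element accounting.
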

The proof of this Lemma is given in Subsection \ref{sec:err:part3}.
With Lemma \ref{lem:vepshEB:2} and a  proof similar to that of  Theorem \ref{thm:mainresult}, the following error estimates can be established, but  the proof is omitted.
\begin{thm}[{Error estimate 2}]
For $k\geq 2$, the semi-discrete DG method of  \eqref{eq:scheme:1:a}-\eqref{eq:scheme:2:a} for Vlasov-Maxwell equations  with the upwind numerical flux \eqref{eq:flux:1}-\eqref{eq:flux:3} for the Vlasov solver and either the central or alternating fluxes of  \eqref{eq:flux:6}-\eqref{eq:flux:7} for the Maxwell solver, has the following error estimate: 
\begin{equation}
||(f-f_h)(t)||^2_{0,\Omega}+||(\bE-\bE_h)(t)||^2_{0,\Ox}+||(\bB-\bB_h)(t)||^2_{0,\Ox} \leq C h^{2k}, \qquad \forall\; t\in [0,T]~.
\label{eq:mainresult:2}
\end{equation}
Besides the dependence as in  Theorem \ref{thm:mainresult}, the constant $C$ also depends on $\delta$ of  \eqref{eq:meshratio}.
\label{thm:error:2}
\end{thm}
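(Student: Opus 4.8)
The plan is to run the argument of Theorem~\ref{thm:mainresult} essentially unchanged, substituting Lemma~\ref{lem:vepshEB:2} for Lemma~\ref{lem:vepshEB} in the treatment of the Maxwell unknowns. The only structural difference is that the central and alternating fluxes are non-dissipative, so the tangential jump terms $\int_{\mE_x}(|[\bveps^E_h]_\tau|^2+|[\bveps^B_h]_\tau|^2)\,ds_x$ that appeared on the left-hand side in Lemma~\ref{lem:vepshEB} are no longer available; consequently the edge consistency error produced by projecting $\bE$ and $\bB$ must be absorbed against $\|\bveps^E_h\|_{0,\Ox}^2+\|\bveps^B_h\|_{0,\Ox}^2$ rather than against an edge norm, which is exactly what costs half a power of $h$ and turns $h^{2k+1}$ into $h^{2k}$.

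First, for the Vlasov part I would process Lemma~\ref{lem:vepsh} exactly as in the proof of Theorem~\ref{thm:mainresult}: use Young's inequality to move the two jump half-norms on the right-hand side of \eqref{eq:lem:vepsh} into the corresponding full jump terms on the left, and then apply the inverse inequality of Lemma~\ref{lem:inverse} (together with $h_\xi/h_{x,\min}\le\sigma_0$) to replace $\|\bveps^E_h\|_{0,\infty,\Ox}+\|\bveps^B_h\|_{0,\infty,\Ox}$ by $h^{-d_x/2}$ times the corresponding $L^2$ norms. This yields verbatim the estimate \eqref{eq:101}, and it is here (not in the Maxwell part) that the restriction $k\ge2$ enters, since one needs $2k-d_x\ge0$ with $d_x=3$, exactly as in Remark~\ref{rem:aftermainresult}. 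Next, for the Maxwell part I would apply Young's inequality to the two terms on the right-hand side of Lemma~\ref{lem:vepshEB:2}: the first gives $\|\veps_h\|_{0,\Omega}^2+h^{2k+2}\|f\|_{k+1,\Omega}^2+\|\bveps^E_h\|_{0,\Ox}^2$ up to constants, and the second gives $c(\delta)^2 h_x^{2k}(\|\bE\|_{k+1,\Ox}^2+\|\bB\|_{k+1,\Ox}^2)+\int_{\mT_h^x}(|\bveps^E_h|^2+|\bveps^B_h|^2)\,d\bx$. This is the analogue of \eqref{eq:102} but with leading power $h_x^{2k}$ rather than $h_x^{2k+1}$.

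Then I would add the two inequalities, set $Y(t)=\int_{\mT_h}|\veps_h|^2\,d\bx d\xi+\int_{\mT_h^x}(|\bveps^E_h|^2+|\bveps^B_h|^2)\,d\bx$, observe that for $k\ge2$ the powers $h^{2k+1}$ and $h^{2k+2}$ are dominated by $h^{2k}$ and the coefficient $h^{2k-d_x}(1+h)\|f\|_{k+1,\Omega}^2$ is bounded, and conclude a differential inequality of the form $\frac{d}{dt}Y\le\Lambda h^{2k}+\Theta Y$, with $\Lambda$ and $\Theta$ depending on the stated norms of $(f,\bE,\bB)$ over $[0,T]$, on $k$, on $\sigma_0,\sigma_x,\sigma_\xi,L_x,L_\xi$, and --- through $c(\delta)$ --- on $\delta$ of \eqref{eq:meshratio}. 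Initializing the scheme with the $L^2$ projections makes $Y(0)=0$, so Gronwall's inequality gives $Y(t)\lesssim h^{2k}$ on $[0,T]$; combining this with the decompositions $f-f_h=\veps_h-\zeta_h$, $\bE-\bE_h=\bveps^E_h-\bzeta^E_h$, $\bB-\bB_h=\bveps^B_h-\bzeta^B_h$, the triangle inequality, and the approximation bounds \eqref{eq:110} yields \eqref{eq:mainresult:2}. The only genuinely new work lies in Lemma~\ref{lem:vepshEB:2} itself (Subsection~\ref{sec:err:part3}): in the absence of the upwind dissipation one must still control the edge consistency error of the central or alternating Maxwell flux, and this is precisely where the local quasi-uniformity assumption \eqref{eq:meshratio} and the constant $c(\delta)$ are needed and where the convergence order drops from $k+\frac12$ to $k$. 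Given that lemma, the step above is routine bookkeeping of powers of $h$ and a single application of Gronwall's inequality, so I expect no further obstacle.
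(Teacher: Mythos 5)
Your proposal is correct and follows exactly the route the paper intends: the paper omits the proof of Theorem~\ref{thm:error:2}, stating only that it proceeds as in Theorem~\ref{thm:mainresult} with Lemma~\ref{lem:vepshEB:2} replacing Lemma~\ref{lem:vepshEB}, and your bookkeeping (Young's inequality turning the $c(\delta)h_x^k$ edge-consistency term into $h^{2k}$ plus the $L^2$ norms of $\bveps_h^E,\bveps_h^B$, followed by Gronwall, the triangle inequality, and \eqref{eq:110}) is precisely that argument. You also correctly locate both the source of the half-order loss (absence of the tangential-jump dissipation on the left-hand side) and the entry points of the $k\geq 2$ restriction and of $\delta$.
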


 Theorem \ref{thm:error:2} indicates that with either the central or alternating numerical flux for the Maxwell solver, the proposed method will be $k$-th order accurate. Also, one can see easily that the accuracy can be improved to  $(k+\frac{1}{2})$-th order as in Theorem \ref{thm:mainresult} if the discrete space for Maxwell solver is one degree higher than that for the Vlasov equation, namely, $r=k+1$. This improvement will require higher regularity for the exact solution $\bE$ and $\bB$.

In \cite{Ayuso2010}, optimal error estimates were established for some DG methods solving the multi-dimensional Vlasov-Poisson problem on Cartesian meshes with tensor-structure discrete space, defined in \eqref{eq:sp:f:3}, and $k\geq 1$.  Some of the techniques in \cite{Ayuso2010} are used in our analysis. In the present work, we focus on the P-type space $\mG_h^{k}$ in \eqref{eq:sp:f} in the numerical section, as it renders better cost efficiency and can be used on more general meshes.
%,(1)}$) or tensor product of P-type (namely, $\mG_h^{k,(2)}$) finite element spaces, which are smaller than the discrete space with complete tensor structure. In fact the ratio of these three spaces are $\sum_{n=0}^k {{n+2d-1}\choose {2d-1}}: (\sum_{n=0}^k {{n+d-1}\choose {d-1}})^2: (k+1)^{2d}$, with $d_x=d_\xi=d$ on a given Cartesian mesh.
 %(note: can also provide tables with sample data, $d=2, 3, k=0, 1, 2, 3, \cdots$).
% The P-type discrete spaces can also be used on  more general meshes.
Our analysis is established only for $k\geq 2$ due to the lack of the $L^\infty$ error estimate of the DG solver for the Maxwell part which is of hyperbolic nature, as pointed out in Remark   \ref{rem:aftermainresult}.

In the next three subsections, we will provide the proofs of Lemmas \ref{lem:vepsh}, \ref{lem:vepshEB} and \ref{lem:vepshEB:2}.
%%%%%%%%%%%%%%%%%%%%%%%%
% error estimate: part 1
%%%%%%%%%%%%%%%%%%%%%%%%
\subsection{Proof of Lemma \ref{lem:vepsh}}
\label{sec:err:part1}

Since the proposed method is consistent,  the error equation is related  to the Vlasov solver,
\begin{equation}
a_h(f,  \bE,  \bB; g_h)-a_h(f_h, \bE_h, \bB_h; g_h)=0,\qquad\forall g_h\in\mG_h^k~.
\label{eq:er:1}
\end{equation}
Note, $\veps_h\in\mG_h^k$; by taking $g_h=\veps_h$ in \eqref{eq:er:1}, one has
\begin{equation}
a_h(\veps_h,  \bE_h,  \bB_h; \veps_h)= a_h(\Pi^k f, \bE_h, \bB_h; \veps_h)-a_h(f,\bE, \bB; \veps_h)~.
\label{eq:er:2}
\end{equation}
Following the same lines as in the proof of Lemma \ref{lem:stability}, we get
\begin{align}
a_h(\veps_h, \bE_h,  \bB_h; \veps_h)=&\frac{1}{2}\frac{d}{dt}\left(\int_{\mT_h} |\veps_h|^2 d\bx d\xi\right)
+\frac{1}{2}\int_{\mT_h^\xi}\int_{\mE_x} |\xi\cdot\bn_x||[\veps_h]_x|^2 ds_x d\xi
\label{eq:er:3}\\
&%+\frac{1}{2}\int_{\mT_h^x}\int_{\mE_\xi} |\bE_h\cdot\bn_\xi||[\veps_h]_\xi|^2 ds_\xi d\bx
+\frac{1}{2}\int_{\mT_h^x}\int_{\mE_\xi} |(\bE_h+\xi\times\bB_h)\cdot\bn_\xi||[\veps_h]_\xi|^2 ds_\xi d\bx~.\notag
\end{align}
Next we will estimate the remaining terms in \eqref{eq:er:2}. Note
\begin{align*}
a_h(\Pi^k f, \bE_h, \bB_h; \veps_h)-a_h(f, \bE, \bB; \veps_h)=T_1+T_2~,
\end{align*}
where
\begin{align*}
T_1&=a_{h,1}(\Pi^k f; \veps_h)-a_{h,1}(f; \veps_h)=a_{h,1}(\zeta_h; \veps_h)~,\\
T_2&=a_{h,2}(\Pi^k f, \bE_h, \bB_h; \veps_h)-a_{h,2}(f,\bE, \bB; \veps_h)~.
\end{align*}

%%%%%%%%%%%%%
% Step 1
%%%%%%%%%%%%

\noindent{\sf Step 1:  estimate of $T_1$.} We start with
\begin{equation*}
T_1
=\int_{\mT_h}(\df_t \zeta_h) \veps_h d\bx d\xi- \int_{\mT_h} \zeta_h\xi\cdot\nabla_\bx \veps_h d\bx d\xi +\int_{\mT_h^\xi} \int_{\mE_x} \widetilde{\zeta_h \xi}\cdot [\veps_h]_x ds_x d\xi=T_{11}+T_{12}+T_{13}~.
\end{equation*}
It is easy to verify that $\df_t\Pi^k=\Pi^k\df_t$, and therefore $\df_t \zeta_h=\Pi^k (\df_t f)-(\df_t f)$. With the approximation result of  Lemma \ref{lem:appr}, we have
\begin{equation}
\label{eq:est:T11}
|T_{11}|=\left|\int_{\mT_h}(\df_t \zeta_h) \veps_h d\bx d\xi \right|
\leq ||\df_t \zeta_h||_{0,\Omega} ||\veps_h||_{0,\Omega}\lesssim h^{k+1} ||\df_t f||_{k+1,\Omega}||\veps_h||_{0,\Omega}~.
\end{equation}
Next,  let $\xi_0$ be the $L^2$ projection of the function $\xi$ onto the piecewise constant space with respect to $\mT_h^\xi$, then
\begin{equation}
T_{12}=- \int_{\mT_h} \zeta_h(\xi-\xi_0)\cdot\nabla_\bx \veps_h d\bx d\xi- \int_{\mT_h} \zeta_h\xi_0\cdot\nabla_\bx \veps_h d\bx d\xi~.\label{eq:T12}
\end{equation}
Since $\xi_0\cdot\nabla_\bx \veps_h\in \mG_h^k$ and $\zeta_h=\Pi^k f-f$ with $\Pi^k$ being the $L^2$ projection onto $\mG_h^k$, the second term in \eqref{eq:T12} vanishes. Hence
\begin{align}
|T_{12}|&\leq \int_{\mT_h} |\zeta_h(\xi-\xi_0)\cdot\nabla_\bx \veps_h| d\bx d\xi~,\notag\\
&\leq ||\xi-\xi_0||_{0,\infty,\Oxi}\sum_{\Kx\times\Kxi=K\in\mT_h}(h^{-1}_\Kx ||\zeta_h||_{0,K})(h_\Kx||\nabla_\bx\veps_h||_{0,K})~,\notag\\
&\lesssim ||\xi-\xi_0||_{0,\infty,\Oxi}\sum_{\Kx\times\Kxi=K\in\mT_h}h_{K}^{k+1}h^{-1}_\Kx ||f||_{k+1,K} ||\veps_h||_{0,K}~,\notag\\
&\lesssim h_\xi ||\xi||_{1,\infty,\Oxi} h^k ||f||_{k+1,\Omega} ||\veps_h||_{0,\Omega}~,\notag\\
&\lesssim h^{k+1} ||f||_{k+1,\Omega} ||\veps_h||_{0,\Omega}~.\label{eq:est:T12}
\end{align}
The third inequality above uses the approximating result of   Lemma \ref{lem:appr} and the inverse inequality of Lemma \ref{lem:inverse}.  The fourth inequality uses an approximation result similar to the last one of  Lemma \ref{lem:appr}, and $\frac{h_\xi}{h_{x,\min}}$ being uniformly bounded by $\sigma_0$ when the mesh is refined.

Next,
\begin{align*}
T_{13}&=\int_{\mT_h^\xi}\int_{\mE_x} \left(\{\zeta_h\}_x\xi+\frac{|\xi\cdot\bn_x|}{2}[\zeta_h]_x\right)\cdot [\veps_h]_x  ds_xd\xi~,\\
&=\int_{\mT_h^\xi}\int_{\mE_x} \left(\{\zeta_h\}_x(\xi\cdot\hat{\bn}_x)\hat{\bn}_x+\frac{|\xi\cdot\bn_x|}{2}[\zeta_h]_x\right)\cdot [\veps_h]_x  ds_xd\xi~,
\end{align*}
where $\hat{\bn}_x$ is the unit normal vector of an edge in $\mE_x$ with either orientation, that is $\hat{\bn}_x=\bn_x$, or $-\bn_x$. Then, 
\begin{align}
|T_{13}|&\leq
\int_{\mT_h^\xi}\int_{\mE_x} \left(|\xi\cdot\bn_x|(|\{\zeta_h\}_x| +\frac{|[\zeta_h]_x|}{2})\right)\cdot |[\veps_h]_x|  ds_x d\xi\notag\\
&\leq
\left( \int_{\mT_h^\xi}\int_{\mE_x} 2 (|\{\zeta_h\}_x|^2+(\frac{|[\zeta_h]_x|}{2})^2) |\xi\cdot\bn_x|ds_xd\xi \right)^{1/2}
 \left(\int_{\mT_h^\xi}\int_{\mE_x}  |\xi\cdot\bn_x|
 |[\veps_h]_x|^2  ds_xd\xi\right)^{1/2}\notag\\
 &=
\left( \int_{\mT_h^\xi}\int_{\mE_x} 2 |\xi\cdot\bn_x| |\{\zeta_h^2\}_x| ds_xd\xi \right)^{1/2}
 \left(\int_{\mT_h^\xi}\int_{\mE_x} |\xi\cdot\bn_x|
 |[\veps_h]_x|^2  ds_xd\xi\right)^{1/2}\notag\\
 &\lesssim ||\xi||^{1/2}_{0,\infty,\Oxi} ||\zeta_h||_{0,\mT_h^\xi\times {\mE_x}}  \left(\int_{\mT_h^\xi}\int_{\mE_x} |\xi\cdot\bn_x|
 |[\veps_h]_x|^2  ds_xd\xi\right)^{1/2}\notag\\
 & \lesssim h^{k+\frac{1}{2}} ||f||_{k+1,\Omega} \left(\int_{\mT_h^\xi}\int_{\mE_x} |\xi\cdot\bn_x|
 |[\veps_h]_x|^2  ds_xd\xi\right)^{1/2}~.\label{eq:est:T13}
\end{align}
The approximation results of Lemma \ref{lem:appr} are used for the last inequality.

\bigskip
\noindent{\sf Step 2: estimate of $T_2$.} Note, 
\begin{align*}
T_2&=a_{h,2}(\Pi^k f, \bE_h, \bB_h; \veps_h)-a_{h,2}(f,\bE, \bB; \veps_h)\\
   &=a_{h,2}(\zeta_h,  \bE_h, \bB_h; \veps_h)+a_{h,2}(f,\bE_h, \bB_h; \veps_h)-a_{h,2}(f,\bE, \bB; \veps_h)=T_{21}+T_{22}+T_{23}~,\\
%   &=a_{h,2}(\zeta_h,  \bE_h, \bB_h; \veps_h)+a_{h,2}(f,\bE_h-\bE, \bB_h-\bB; \veps_h)=T_{21}+T_{22}+T_{23}~,
\end{align*}
with
\begin{align*}
T_{21}&=- \int_{\mT_h} \zeta_h (\bE_h+\xi\times\bB_h)\cdot\nabla_\xi \veps_h d\bx d\xi, \qquad
T_{22}=\int_{\mT_h^x} \int_{\mE_\xi} \widetilde{\zeta_h (\bE_h+\xi\times\bB_h)}\cdot [\veps_h]_\xi ds_\xi dx,\\
T_{23}&=a_{h,2}(f,\bE_h, \bB_h; \veps_h)-a_{h,2}(f,\bE, \bB; \veps_h)~.
%a_{h,2}(f,\bE_h-\bE, \bB_h-\bB; \veps_h)~.
\end{align*}
For $T_{21}$, we proceed as for the estimate of  $T_{12}$. Let $\bE_0=\bPi_x^0\bE$, $\bB_0=\bPi_x^0\bB$ be the $L^2$ projection of $\bE$, $\bB$, respectively, onto the piecewise constant vector space with respect to $\mT_h^x$, then
\begin{align*}
\int_{\mT_h} \zeta_h (\bE_h+\xi\times\bB_h)\cdot\nabla_\xi \veps_h d\bx d\xi
=&\int_{\mT_h} \zeta_h (\bE_h-\bE_0+\xi\times (\bB_h-\bB_0))\cdot\nabla_\xi \veps_h d\bx d\xi\\
&+\int_{\mT_h} \zeta_h (\bE_0+\xi\times\bB_0)\cdot\nabla_\xi \veps_h d\bx d\xi~,
\end{align*}
and the second term above vanishes due to $(\bE_0+\xi\times\bB_0)\cdot\nabla_\xi \veps_h\in\mG_h^k$, and therefore
\begin{align*}
&|\int_{\mT_h} \zeta_h (\bE_h+\xi\times\bB_h)\cdot\nabla_\xi \veps_h d\bx d\xi|
\leq \int_{\mT_h} |\zeta_h (\bE_h-\bE_0+\xi\times(\bB_h-\bB_0))\cdot\nabla_\xi \veps_h| d\bx d\xi~,\\
&\leq(||\bE_h-\bE_0+\xi\times(\bB_h-\bB_0)||_{0,\infty,\Omega})\sum_{\Kx\times\Kxi=K\in\mT_h}(h^{-1}_\Kxi ||\zeta_h||_{0,K})(h_\Kxi||\nabla_\xi\veps_h||_{0,K})~,\\
&\lesssim (||\bE_h-\bE_0||_{0,\infty,\Ox}+||(\bB_h-\bB_0)||_{0,\infty,\Ox}) \sum_{\Kx\times\Kxi=K\in\mT_h}h_{K}^{k+1}h^{-1}_\Kxi||f||_{k+1,K} ||\veps_h||_{0,K}~,\\
&\lesssim h^k||f||_{k+1,\Omega}(||\bveps^E_h||_{0,\infty,\Ox}+||\bveps^B_h||_{0,\infty,\Ox}+
||\bPi^k_x\bE-\bE_0 ||_{0,\infty,\Ox}+||\bPi^k_x\bB-\bB_0 ||_{0,\infty,\Ox})||\veps_h||_{0,\Omega}~.
\end{align*}
Note that $\bPi_x^k\bE-\bE_0=\bPi_x^k(\bE-\bE_0)$, and $\bPi_x^k$ is bounded in any $L^p$-norm ($1\leq p\leq \infty$) \cite{Crouzeix-Thomee:1987, Ayuso2010},
%(note: reference [22] of Ayuso's paper cited in technical tool in section 2.2; alternatively, this can be easily proved by following Ciarlet's book),
then
\begin{equation*}
 ||\bPi_x^k\bE-\bE_0||_{0,\infty,\Ox}\lesssim ||\bE-\bE_0||_{0,\infty,\Ox}\lesssim h_x||\bE||_{1,\infty,\Ox}~,
\end{equation*}
and similarly $||\bPi_x^k\bB-\bB_0||_{0,\infty,\Ox}\lesssim h_x||\bB||_{1,\infty,\Ox}$.
Hence
\begin{align}
&\left|\int_{\mT_h} \zeta_h (\bE_h+\xi\times\bB_h)\cdot\nabla_\xi \veps_h d\bx d\xi\right|
\label{eq:est:T21}\\
\lesssim & h^k||f||_{k+1,\Omega}(||\bveps^E_h||_{0,\infty,\Ox}+||\bveps^B_h||_{0,\infty,\Ox}+h_x(||\bE||_{1,\infty,\Ox}+||\bB||_{1,\infty,\Ox}))||\veps_h||_{0,\Omega}~.\notag
\end{align}

For $T_{22}$, we follow the  estimate of $T_{13}$.  Note that $\bE_h$ and $\bB_h$ only depends on $\bx$, and $\xi$ is continuous,
\begin{align*}
&|\int_{\mT_h^x} \int_{\mE_\xi} \widetilde{\zeta_h (\bE_h+\xi\times\bB_h)}\cdot [\veps_h]_\xi ds_\xi dx|\\
&=| \int_{\mT_h^x} \int_{\mE_\xi} \left(\{\zeta_h(\bE_h+\xi\times\bB_h) \}_\xi+\frac{|(\bE_h+\xi\times\bB_h)\cdot\bn_\xi|}{2}[\zeta_h]_\xi\right)\cdot [\veps_h]_\xi
ds_\xi dx|~,\\
&=|\int_{\mT_h^x}\int_{\mE_\xi} \left(\{\zeta_h\}_\xi((\bE_h+\xi\times\bB_h)\cdot\hat{\bn}_\xi)\hat{\bn}_\xi+\frac{|(\bE_h+\xi\times\bB_h)\cdot\bn_\xi|}{2}[\zeta_h]_\xi\right)\cdot [\veps_h]_\xi  ds_\xi dx|~, \;\;\; \hat{\bn}_\xi={\bn}_\xi\;\textrm{or}\; -{\bn}_\xi\\
&\leq \int_{\mT_h^x}\int_{\mE_\xi} \left(|(\bE_h+\xi\times\bB_h)\cdot\bn_\xi|(|\{\zeta_h\}_\xi|+|\frac{[\zeta_h]_\xi}{2}|)\right)|[\veps_h]_\xi|  ds_\xi dx~,\\
&\leq \left(\int_{\mT_h^x}\int_{\mE_\xi} 2 |(\bE_h+\xi\times\bB_h)\cdot\bn_\xi| |\{\zeta_h^2\}|  ds_\xi dx\right)^{1/2}
\left(\int_{\mT_h^x}\int_{\mE_\xi} |(\bE_h+\xi\times\bB_h)\cdot\bn_\xi| |[\veps_h]|^2  ds_\xi dx\right)^{1/2}~. 
\end{align*}
In addition,
\begin{align*}
&\left(\int_{\mT_h^x}\int_{\mE_\xi} 2 |(\bE_h+\xi\times\bB_h)\cdot\bn_\xi| |\{\zeta_h^2\}|  ds_\xi dx\right)^{1/2}\\
&\lesssim ||\bE_h+\xi\times\bB_h||_{0,\infty,\Omega}^{1/2} ||\zeta_h||_{0,\mT_h^x\times {\mE_\xi}}\\
&\lesssim  ||\zeta_h||_{0,\mT_h^x\times {\mE_\xi}}(||\bE_h||_{0,\infty,\Ox}^{1/2}+||\bB_h||_{0,\infty,\Ox}^{1/2})\\
%&\lesssim h^{k+\frac{1}{2}} ||f||_{k+1,\Omega} (||\bveps^E_h+\xi\times \bveps^B_h||_{0,\infty,\Ox}^{1/2}+||\bPi_x^k\bE+\xi\times \bPi_x^k\bB||_{0,\infty,\Ox}^{1/2})\\
&\lesssim h^{k+\frac{1}{2}} ||f||_{k+1,\Omega} (||\bveps^E_h||_{0,\infty,\Ox}^{1/2}+||\bveps^B_h||_{0,\infty,\Ox}^{1/2}+||\bE||_{0,\infty,\Ox}^{1/2}+||\bB||_{0,\infty,\Ox}^{1/2})~, 
\end{align*}
and therefore
\begin{align}
T_{22}\lesssim h^{k+\frac{1}{2}} ||f||_{k+1,\Omega} &(||\bveps^E_h||_{0,\infty,\Ox}^{1/2}+||\bveps^B_h||_{0,\infty,\Ox}^{1/2}+||\bE||_{0,\infty,\Ox}^{1/2}+||\bB||_{0,\infty,\Ox}^{1/2})\label{eq:est:T22}\\
&\left(\int_{\mT_h^x}\int_{\mE_\xi} |(\bE_h+\xi\times\bB_h)\cdot\bn_\xi| |[\veps_h]|^2  ds_\xi dx\right)^{1/2}~.\notag
\end{align}

Finally, we   estimate $T_{23}$. Since $f$ is continuous in $\xi$, and $\nabla_\xi\cdot (\bE_h-\bE+\xi\times (\bB_h-\bB))=0$,
%and the numerical flux is consistent,
\begin{align*}
T_{23}&=a_{h,2}(f,\bE_h, \bB_h; \veps_h)-a_{h,2}(f,\bE, \bB; \veps_h)\\
%a_{h,2}(f, \bE_h-\bE, \bB_h-\bB; \veps_h)\\
%&=- \int_{\mT_h} f (\bE_h-\bE+\xi\times(\bB_h-\bB))\cdot\nabla_\xi \veps_h d\bx d\xi+\int_{\mT_h^x} \int_{\mE_\xi} \widetilde{f (\bE_h-\bE+\xi\times(\bB_h-\bB))}\cdot [\veps_h]_\xi ds_\xi dx~,\\
&=- \int_{\mT_h} f (\bE_h-\bE+\xi\times(\bB_h-\bB))\cdot\nabla_\xi \veps_h d\bx d\xi+\int_{\mT_h^x} \int_{\mE_\xi} f (\bE_h-\bE+\xi\times(\bB_h-\bB))\cdot [\veps_h]_\xi  ds_\xi dx~,\\
&= \int_{\mT_h} \nabla_\xi f \cdot(\bE_h-\bE+\xi\times(\bB_h-\bB)) \veps_h d\bx d\xi~;
\end{align*}
therefore, 
\begin{align}
|T_{23}|&\leq ||\bE_h-\bE+\xi\times(\bB_h-\bB)||_{0,\Omega} |f|_{1,\infty,\Omega} ||\veps_h||_{0,\Omega}~,\notag\\
        &  \lesssim (||\bE_h-\bE||_{0,\Ox}+||(\bB_h-\bB)||_{0,\Ox}) |f|_{1,\infty,\Omega} ||\veps_h||_{0,\Omega} ~,\notag\\
&\lesssim  (||\bveps^E_h||_{0,\Ox}+||\bveps^B_h||_{0,\Ox}+||\bzeta^E_h||_{0,\Ox}+||\bzeta^B_h||_{0,\Ox})|f|_{1,\infty,\Omega} ||\veps_h||_{0,\Omega} ~,\notag\\
&\lesssim (||\bveps^E_h||_{0,\Ox}+||\bveps^B_h||_{0,\Ox}+h^{k+1}_x(||\bE||_{k+1,\Ox}+||\bB||_{k+1,\Ox}))|f|_{1,\infty,\Omega} ||\veps_h||_{0,\Omega}~.
\label{eq:est:T23}
\end{align}

Now we  combine the estimates of \eqref{eq:est:T11} and \eqref{eq:est:T12}-\eqref{eq:est:T23}, and get the result of   Lemma \ref{lem:vepsh}.

\subsection{Proof of Lemma \ref{lem:vepshEB}}
\label{sec:err:part2}

Since the proposed method is consistent, the error equation is related to the Maxwell solver,
\begin{equation}
b_h(\bE-\bE_h,  \bB-\bB_h; \bU, \bV)=l_h(\bJ-\bJ_h, \bU),\qquad\forall\; \bU, \bV\in\mU_h^k~.
\label{eq:er:1:Max}
\end{equation}
Taking the test functions in \eqref{eq:er:1:Max} to be $\bU=\bveps_h^E$ and $\bV=\bveps_h^B$  gives
\begin{equation}
b_h(\bveps_h^E, \bveps_h^B; \bveps_h^E, \bveps_h^B)= b_h(\bzeta_h^E, \bzeta_h^B; \bveps_h^E, \bveps_h^B)+l_h(\bJ-\bJ_h, \bveps_h^E)~.
\label{eq:er:2:Max}
\end{equation}
Following the same lines of {\sf Step 2} in the proof of Lemma \ref{lem:EneC},
\begin{equation}
b_h(\bveps_h^E, \bveps_h^B; \bveps_h^E, \bveps_h^B)=
\frac{1}{2}\frac{d}{dt}\int_{\mT_h^x} \left(|\bveps_h^E|^2+|\bveps_h^B|^2\right) d\bx
 +\frac{1}{2}\int_{\mE_x}\left( |[\bveps_h^E]_\tau|^2+|[\bveps_h^B]_\tau|^2 \right) ds_x~.
 \label{eq:est:EB:1}
\end{equation}

It remains to estimate the two terms on the right side of \eqref{eq:er:2:Max},
\begin{align}
&b_h(\bzeta_h^E, \bzeta_h^B; \bveps_h^E, \bveps_h^B)\notag\\
=&\int_{\mT_h^x}\df_t\bzeta^E_h\cdot\bveps_h^E d\bx
-\int_{\mT_h^x}\bzeta^B_h\cdot\nabla\times\bveps_h^E d\bx-\int_{\mE_x}\widetilde{\bzeta^B_h}\cdot [\bveps_h^E]_\tau ds_x\notag\\
&+\int_{\mT_h^x}\df_t\bzeta^B_h\cdot\bveps_h^B d\bx
+\int_{\mT_h^x}\bzeta^E_h\cdot\nabla\times\bveps_h^B d\bx+\int_{\mE_x}\widetilde{\bzeta^E_h}\cdot[\bveps_h^B]_\tau ds_x~,\label{eq:501}\\
=&-\int_{\mE_x}\widetilde{\bzeta^B_h}\cdot [\bveps_h^E]_\tau ds_x+\int_{\mE_x}\widetilde{\bzeta^E_h}\cdot[\bveps_h^B]_\tau ds_x~,\notag\\
\leq & \left(\int_{\mE_x} |\widetilde{\bzeta^B_h}|^2+|\widetilde{\bzeta^E_h}|^2  ds_x\right)^{1/2}\left(\int_{\mE_x} |[\bveps_h^E]_\tau|^2+|[\bveps_h^B]_\tau|^2  ds_x\right)^{1/2}~,\notag\\
\lesssim & \sum_{K_x\in\mT_h^x}(||\bzeta^E_h||_{0,\partial \Kx}+||\bzeta^B_h||_{0,\partial \Kx})\left(\int_{\mE_x} |[\bveps_h^E]_\tau|^2+|[\bveps_h^B]_\tau|^2  ds_x\right)^{1/2}~,\notag\\
%\lesssim & (||\bzeta^E_h||_{0,\mE_x}+||\bzeta^B_h||_{0,\mE_x})\left(\int_{\mE_x} |[\bveps_h^E]_\tau|^2+|[\bveps_h^B]_\tau|^2  %ds_x\right)^{1/2}~,\notag\\
\lesssim & h_x^{k+\frac{1}{2}}(||\bE||_{k+1,\Ox}+||\bB||_{k+1,\Ox})\left(\int_{\mE_x} |[\bveps_h^E]_\tau|^2+|[\bveps_h^B]_\tau|^2  ds_x\right)^{1/2}~.\notag
\end{align}
All of the volume integrals of  \eqref{eq:501} vanish due to  $\partial_t\bPi^k_x=\bPi^k_x\partial_t$ and $\bveps_h^E, \bveps_h^B, \nabla\times\bveps_h^E, \nabla\times\bveps_h^B\in \mU_h^k$. And,  for the last two inequalities, the definition of the numerical fluxes are used together with the approximation results of  Lemma \ref{lem:appr}. Finally,
\begin{align}
|l_h(\bJ-\bJ_h; \bveps_h^E)|&=|\int_{\mT_h^x}(\bJ-\bJ_h)\cdot\bveps_h^E d\bx|~,\notag\\
& \leq ||\bJ-\bJ_h||_{0,\Ox}||\bveps_h^E||_{0,\Ox}= ||\int_{\mT_\xi}(f-f_h)\xi d\xi||_{0,\Ox}||\bveps_h^E||_{0,\Ox}~,\notag\\
& \leq ||f-f_h||_{0,\Omega} ||\xi||_{0,\Oxi} ||\bveps_h^E||_{0,\Ox}~,\notag\\
& \lesssim (||\veps_h||_{0,\Omega}+||\zeta_h||_{0,\Omega})||\bveps_h^E||_{0,\Ox}\lesssim(||\veps_h||_{0,\Omega}+
h^{k+1}||f||_{k+1,\Omega})||\bveps_h^E||_{0,\Ox}~.
 \label{eq:est:EB:2}
\end{align} ~
Combining \eqref{eq:est:EB:1}-\eqref{eq:est:EB:2}, we  conclude Lemma \ref{lem:vepshEB}.

%%%%%%%%%%%%%%%%%%%%%%%%
% error estimate: part 2:2
%%%%%%%%%%%%%%%%%%%%%%%%
\subsection{Proof of Lemma \ref{lem:vepshEB:2}}
\label{sec:err:part3}

The proof proceeds in a manner similar to that of  Lemma \ref{lem:vepshEB} of Subsection \ref{sec:err:part2}. Based on the error equation \eqref{eq:er:1:Max},  related to the Maxwell solver with some specific test functions, we get \eqref{eq:er:2:Max}. With either the central or alternating flux of  \eqref{eq:flux:6}-\eqref{eq:flux:7}, we have 
\begin{equation*}
b_h(\bveps_h^E, \bveps_h^B; \bveps_h^E, \bveps_h^B)=
\frac{1}{2}\frac{d}{dt}\int_{\mT_h^x} \left(|\bveps_h^E|^2+|\bveps_h^B|^2\right) d\bx~.
\end{equation*}
The same estimate as that of  \eqref{eq:est:EB:2} can be obtained for the second term on the right of \eqref{eq:er:2:Max}. To estimate the first one,
\begin{align}
&b_h(\bzeta_h^E, \bzeta_h^B; \bveps_h^E, \bveps_h^B)\notag\\
=&\int_{\mT_h^x}\df_t\bzeta^E_h\cdot\bveps_h^E d\bx
-\int_{\mT_h^x}\bzeta^B_h\cdot\nabla\times\bveps_h^E d\bx-\int_{\mE_x}\widetilde{\bzeta^B_h}\cdot [\bveps_h^E]_\tau ds_x\notag\\
&+\int_{\mT_h^x}\df_t\bzeta^B_h\cdot\bveps_h^B d\bx
+\int_{\mT_h^x}\bzeta^E_h\cdot\nabla\times\bveps_h^B d\bx+\int_{\mE_x}\widetilde{\bzeta^E_h}\cdot[\bveps_h^B]_\tau ds_x\label{eq:502}\\
=&-\int_{\mE_x}\widetilde{\bzeta^B_h}\cdot [\bveps_h^E]_\tau ds_x+\int_{\mE_x}\widetilde{\bzeta^E_h}\cdot[\bveps_h^B]_\tau ds_x\notag\\
%\end{align}
%\label{eq:501}\\
%\begin{align}
\leq & \left(\sum_{e\in\mE_x}\int_e h_\Kx^{-1}(|\widetilde{\bzeta^B_h}|^2+|\widetilde{\bzeta^E_h}|^2)  ds_x\right)^{1/2}\left(\sum_{e\in\mE_x}\int_e h_\Kx(|[\bveps_h^E]_\tau|^2+|[\bveps_h^B]_\tau|^2)  ds_x\right)^{1/2}\label{eq:503}\\
\lesssim & ~c(\delta) \left(\sum_{\Kx\in\mT_h^x}\int_{\partial\Kx} h_\Kx^{-1}(|\bzeta^B_h|^2+|\bzeta^E_h|^2)  ds_x\right)^{1/2}
\left(\sum_{\Kx\in\mT_h^x}\int_{\partial\Kx} h_\Kx(|\bveps_h^E|^2+|\bveps_h^B|^2)  ds_x\right)^{1/2}\label{eq:504}\\
\lesssim & ~c(\delta) \left(\sum_{\Kx\in\mT_h^x}h_\Kx^{2k}(||\bE||_{k+1,\Kx}^2+||\bB||_{k+1,\Kx}^2)\right)^{1/2}
\left(\sum_{\Kx\in\mT_h^x}(||\bveps_h^E||^2_{0,\Kx}+||\bveps_h^B||^2_{0,\Kx})  \right)^{1/2}\label{eq:505}\\
\lesssim & ~c(\delta) h_x^k(||\bE||_{k+1,\Ox}+||\bB||_{k+1,\Ox})\left(\int_{\mT_h^x} (|\bveps_h^E|^2+|\bveps_h^B|^2)  d\bx\right)^{1/2}~.\notag
\end{align}
As before, all volume integrals of  \eqref{eq:502} vanish due to  $\partial_t\bPi^k_x=\bPi^k_x\partial_t$ and $\bveps_h^E, \bveps_h^B, \nabla\times\bveps_h^E, \nabla\times\bveps_h^B\in \mU_h^k$. In \eqref{eq:503}, $\Kx$ is any element containing an edge $e$. To get \eqref{eq:504}, we use the definitions of the numerical fluxes, jumps, as well as the assumption \eqref{eq:meshratio} on the ratio of the neighboring mesh elements. Here $c(\delta)$ is a positive constant depending on  $\delta$. We obtain \eqref{eq:505} by applying an approximation result of  Lemma \ref{lem:appr} and an inverse inequality of Lemma \ref{lem:inverse}. From all the above, we  conclude Lemma \ref{lem:vepshEB:2}.

\section{Numerical results}
\label{numres}

In this section, we perform a detailed numerical study of the proposed scheme in the context of the streaming Weibel (SW) instability first analyzed in \cite{pegoraro96}.  The SW instability is  closely related to the Weibel instability of \cite{PhysRevLett.2.83}, but derives its free energy from transverse counter-streaming as opposed to  temperature anisotropy.  The SW instability and its Weibel counterpart have  been considered both  analytically and numerically in several papers (e.g.\ \cite{pegoraro96, califano1998ksw,califano1965ikp,califano2001ffm, lopa2009}) --  here we focus on comparison with the numerical results of  Califano \emph{et al.}  in \cite{califano1998ksw}.

%
%In particular, we focus on the test example of the Weibel instability \cite{PhysRevLett.2.83}. The Weibel instability is a plasma instability present in homogeneous or nearly homogeneous electromagnetic plasmas which possess an anisotropy in velocity space.
%This anisotropy is most generally understood as two temperatures in different directions, and the magnetic field in the Weibel instability will grow in time.  We follow the setting of Califano \emph{et al.} \cite{califano1998ksw}. 

We consider a reduced version of the Vlasov-Maxwell equations with one spatial variable, $x_2$,  and two velocity variables, $\xi_1$ and $\xi_2$,  The dependent  variables under consideration are the distribution function $f(x_2, \xi_1, \xi_2, t)$, a 2D electric field $\textbf{E}=(E_1(x_2, t),
E_2(x_2, t), 0)$ and a 1D magnetic field $\textbf{B}=(0, 0, B_3(x_2, t))$, and the reduced 
 Vlasov-Maxwell system is 
\begin{align}
f_t &+ \xi_2 f_{x_2} + (E_1 + \xi_2 B_3)f_{\xi_1} + (E_2 -
\xi_1 B_3 )f_{\xi_2} = 0~, \\
\frac{\df B_3}{\df t} &=  \frac{\df E_1}{\df x_2}, \quad
 \frac{\df E_1}{\df t} =  \frac{\df
B_3}{\df x_2} -  j_1, \quad \frac{\df E_2}{\df t} =  -  j_2~,
\end{align}
where 
\beq j_1=\int_{-\infty}^{\infty} \int_{-\infty}^{\infty}
f(x_2, \xi_1, \xi_2, t) \xi_1 \,d\xi_1 d\xi_2,\quad j_2=\int_{-\infty}^{\infty} \int_{-\infty}^{\infty} f(x_2, \xi_1,
\xi_2, t) \xi_2 \,d\xi_1 d\xi_2~.
\eeq

%\beq  f_t + \xi_2 f_{x_2} + (E_1 + \xi_2 B_3)f_{\xi_1} + (E_2 -
%\xi_1 B_3 )f_{\xi_2} = 0 \eeq

% \beq
%\frac{\df B_3}{\df t} =  \frac{\df E_1}{\df x_2}, \eeq

% \beq
% \frac{\df E_1}{\df t} =  \frac{\df
%B_3}{\df x_2} -  j_1, \quad \frac{\df E_2}{\df t} =  -  j_2, \eeq

%where \beq j_1=\int_{-\infty}^{\infty} \int_{-\infty}^{\infty}
%f(x_2, \xi_1, \xi_2, t) \xi_1 \,d\xi_1 d\xi_2,\eeq

% \beq
%j_2=\int_{-\infty}^{\infty} \int_{-\infty}^{\infty} f(x_2, \xi_1,
%\xi_2, t) \xi_2 \,d\xi_1 d\xi_2.\eeq

The initial conditions are given by
\begin{align}
f(x_2, \xi_1, \xi_2, 0)&=\frac{1}{\pi \beta}
e^{- \xi_2^2 /\beta} [\delta e^{- (\xi_1-v_{0,1})^2 /\beta}
+(1-\delta) e^{- (\xi_1+v_{0,2})^2 /\beta} ],\\
E_1(x_2, \xi_1, \xi_2, 0)&=E_2(x_2, \xi_1, \xi_2, 0)=0, \qquad B_3(x_2, \xi_1, \xi_2, 0)=b \sin(k_0 x_2)~,
\end{align}
which for $b=0$ is an equilibrium state composed of counter-streaming beams propagating perpendicular to the direction of inhomogeneity.   Following \cite{califano1998ksw}, we trigger the instability by taking  $\beta=0.01$, $b=0.001$ (the amplitude of the initial perturbation to the magnetic field).  Here, $\Ox=[0, L_y]$, where $L_y=2 \pi/ k_0$, and we set  $\Oxi=[-1.2, 1.2]^2$.  Two different sets of  parameters  will be considered,  
\begin{eqnarray}
{\rm \underline{choice\  1}:} \  \ \delta&=&0.5, v_{0,1}=v_{0,2}=0.3, k_0=0.2 
\nonumber\\
 {\rm  \underline{choice\  2}:} \ \ \delta&=&1/6,  v_{0,1}=0.5, v_{0,2}=0.1,  k_0=0.2~.
 \nonumber
\end{eqnarray}
For comparison, these are chosen  to correspond to runs of \cite{califano1998ksw}. 

\smallskip
\textbf{Accuracy test:} The VM system is time reversible, and this provides a way to test the accuracy of our scheme. In particular, let $f(\bx, \xi, 0), \bE(\bx, 0), \bB(\bx, 0) $ denote  the initial conditions for the VM system  and $f(\bx, \xi, T),  \bE(\bx, T), \bB(\bx, T)$  the solution  at $t=T$.  If we choose  $f(\bx, -\xi, T), \bE(\bx, T), -\bB(\bx, T)$ as the initial condition  at $t=0$, then at $t=T$ we theoretically must  recover $f(\bx, -\xi, 0), \bE(\bx, 0), -\bB(\bx, 0)$. In Tables \ref{errorupwind}, \ref{errorcentral}, we show the $L^2$ errors and orders of the numerical solutions with  three flux choices for the Maxwell's equations:  the upwind flux, the central flux, and one of the alternating fluxes $\widetilde{\bE_h}=\bE_h^+$ and $\widetilde{\bB_h}=\bB_h^-$. The parameters are those of choice 1, with symmetric counter-streaming.   In the numerical simulations, the third order TVD Runge Kutta time discretization is used, with the CFL number $C_{\rm cfl}=0.19
 $ for  the upwind and central 
fluxes, and  $C_{\rm cfl}=0.12$ for the alternating flux in $P^1$ and $P^2$ cases. For $P^3$, we take $\triangle t=O( \triangle x^{4/3})$ to ensure that the spatial and temporal accuracy is of the same order. From Tables \ref{errorupwind}, \ref{errorcentral}, we observe that the schemes with the upwind and alternating fluxes achieve optimal $(k+1)$-th order accuracy in approximating the solution, while for odd $k$, the central flux gives suboptimal approximation of some of the solution components.

\begin{table} [htb]
\begin{center}

\caption {Upwind flux for Maxwell's equations, $L^2$ errors and orders. Run to T=5 and back to $T=10$.}

\begin{small}
\begin{tabular}{|c|c|c|c|c|c|c|}
\hline  \multirow{2}{*}{Space} &\multirow{2}{*}{} & Mesh=$20^3$ & \multicolumn{2}{|c|}{Mesh=$40^3$} & \multicolumn{2}{|c|}{Mesh=$80^3$} \\
\cline{3-7}
 & &error& error&order& error&order \\
\hline
\multirow{4}{*}{$\mG_h^{1}, \mU_h^1$} & $f$ &0.18E+00& 0.50E-01&1.82& 0.13E-01&1.96 \\
\cline{2-7}
 & $B_3$ &0.26E-05& 0.66E-06&2.01&0.16E-06  &2.01\\
\cline{2-7}
 & $E_1$ &0.21E-05& 0.68E-06&1.61& 0.19E-06&1.81 \\
 \cline{2-7}
 & $E_2$ &0.10E-05& 0.22E-06&2.23& 0.22E-07&3.29 \\
 \hline
 \multirow{4}{*}{$\mG_h^{2}, \mU_h^2$} & $f$ &0.56E-01& 0.77E-02 &2.87& 0.10E-02&2.92 \\
\cline{2-7}
 & $B_3$ & 0.23E-06& 0.26E-07&3.12&0.32E-08   &3.06\\
\cline{2-7}
 & $E_1$ &0.16E-06& 0.16E-07 &3.32& 0.14E-08&3.54  \\
 \cline{2-7}
 & $E_2$ &0.16E-06& 0.22E-07&2.90& 0.15E-08&3.91 \\
 \hline
 \multirow{4}{*}{$\mG_h^{3}, \mU_h^3$} & $f$ &0.12E-01& 0.10E-02&3.56& 0.70E-04&3.90 \\
\cline{2-7}
 & $B_3$ &0.97E-07& 0.23E-08&5.37&0.12E-09  &4.34\\
\cline{2-7}
 & $E_1$ &0.19E-07& 0.27E-09&6.16& 0.57E-11&5.54 \\
 \cline{2-7}
 & $E_2$ &0.14E-07& 0.79E-09&4.11& 0.16E-10&5.64 \\
 \hline
\end{tabular}
\end{small}
\label{errorupwind}
\end{center}
\end{table}

\begin{table} [htb]
\begin{center}
\caption {Central and alternating fluxes for Maxwell's equations, $L^2$ errors and orders. Run to T=5 and back to $T=10$.}

\begin{small}
%\begin{tabular}{p{3cm}| p{5cm} p{5cm}}\hline
\begin{tabular}{|c|c|c|c|c|c|c|c|c|c|c|c|}
\hline &&\multicolumn{5}{|c|}{Central} &\multicolumn{5}{|c|}{Alternating} \\
\hline
&\multirow{2}{*}{} & Mesh=$20^3$ & \multicolumn{2}{|c|}{Mesh=$40^3$} & \multicolumn{2}{|c|}{Mesh=$80^3$} & Mesh=$20^3$ & \multicolumn{2}{|c|}{Mesh=$40^3$} & \multicolumn{2}{|c|}{Mesh=$80^3$}\\
\cline{3-12}
 & &error& error&order& error&order&error& error&order& error&order  \\
\hline
\multirow{2}{*}{$\mG_h^{1} $} & $f$ &0.18E+00& 0.50E-01&1.82& 0.13E-01&1.96&0.18E+00& 0.50E-01&1.82& 0.13E-01&1.96 \\
\cline{2-12}
 & $B_3$ &0.13E-04& 0.85E-05&0.66&0.50E-05  &0.75&0.29E-05& 0.78E-06&1.90&0.22E-06  &1.83\\
\cline{2-12}
\multirow{2}{*}{$\mU_h^1$}  & $E_1$ &0.19E-05& 0.13E-05&0.51& 0.58E-06&1.17 &0.24E-06& 0.35E-07&2.74& 0.22E-08&3.99\\
 \cline{2-12}
 & $E_2$ &0.92E-06& 0.19E-06&2.26& 0.20E-07&3.24&0.10E-05& 0.22E-06&2.23& 0.22E-07&3.29 \\
 \hline
\multirow{2}{*}{$\mG_h^{2} $}  & $f$ & 0.56E-01 & 0.77E-02&2.87& 0.10E-02&2.92&0.56E-01& 0.77E-02&2.87& 0.10E-02&2.92 \\
\cline{2-12}
 & $B_3$ &0.28E-06& 0.28E-07&3.34&0.32E-08  &3.15&0.28E-06&  0.22E-07 &3.70&0.18E-08  &3.63\\
\cline{2-12}
\multirow{2}{*}{$\mU_h^2$} & $E_1$ &0.18E-07& 0.56E-09&5.00& 0.88E-11&5.99 &0.32E-07& 0.30E-09&6.72&0.11E-10&4.84  \\
 \cline{2-12}
 & $E_2$ &0.16E-06& 0.22E-07&2.90& 0.15E-08&3.91 &0.16E-06&  0.22E-07 &2.90& 0.15E-08&3.91  \\
 \hline
\multirow{2}{*}{$\mG_h^3$}& $f$ &0.12E-01& 0.10E-02 &3.56& 0.70E-04 &3.90&0.12E-01& 0.10E-02&3.56& 0.70E-04 &3.90 \\
\cline{2-12}
 & $B_3$ &0.10E-06& 0.44E-08&4.57&0.16E-09  &4.81&0.10E-06& 0.24E-08&5.42&0.12E-09  &4.36\\
\cline{2-12}
\multirow{2}{*}{$\mU_h^3$} & $E_1$ &0.46E-07&  0.82E-10 &9.12& 0.30E-10& 1.45&0.98E-08& 0.10E-09& 6.60& 0.90E-12&6.80 \\
 \cline{2-12}
 & $E_2$ &0.14E-07&  0.79E-09&4.12&  0.16E-10  &5.65 &0.14E-07&  0.79E-09&4.11& 0.16E-10&5.64 \\
 \hline
\end{tabular}
\end{small}
\label{errorcentral}
\end{center}
\end{table}

\smallskip

\textbf{Conservation properties:} The purpose here is to validate our theoretical result about conservation through two numerical examples,  the symmetric case and the non-symmetric case. We first use parameter choice 1 as in the Califano \textit{et al.} \cite{califano1998ksw}, the symmetric case  with three different fluxes for  Maxwell's equations.  The results are illustrated in Figure \ref{masste1} }.  In all the plots, we have rescaled the macroscopic quantities by the physical domain size. For all three fluxes, the mass (charge) is well conserved.  The largest relative error for the charge for  all three fluxes is smaller than $4\times10^{-10}$. As for the total energy, we could observe relatively larger decay in the total energy from the simulation with the upwind flux compared to the one with the other two fluxes. This is expected from the analysis in Section \ref{conserve}. In fact, the largest relative error for the total energy is bounded by $1\times10^{-4}$ for th
 e upwind flux, and bounded by $1\
times10^{-7}$ for central and alternating fluxes. 

\begin{comment}
In Figure \ref{comparedomain}, we study the effect of enlarging the domain in the velocity direction.
The growth in the total mass, as time approaches $T=200$ when $\Oxi=[-1.2, 1.2]^2$, implies that up to this time, a larger domain should be used in order  for the assumption, $f$ being compactly supported in $\xi$, to still hold. This growth in relative error is not observed when $\Oxi=[-1.5, 1.5]^2$. On the other hand, the decay in the total energy with the upwind fluxes is largely due to the tangential jump terms in the electric and magnetic field as derived in Lemma 3.2. Therefore, enlarging the domain has little effects on this. As for the decay in total energy with central and alternating fluxes, we can see that enlarging the domain roughly reduces the error by half. The other part of the error is coming from the dissipative nature of the TVD-RK scheme that we have used.
\end{comment}
 
As for momentum conservation, it is well known that the two species VM system conserves the following expression for the total linear momentum:
 \beq
 \mathbf{P}= \int \xi f\, d\xi d\mathbf{x} + \int \mathbf{E}\times\mathbf{B} \, d\mathbf{x}~, 
 \label{eq:mom}
 \eeq
where the first term represents the momentum in the  particles while the second that of  the electromagnetic field.    In fact, this is true for the full energy-momentum and angular momentum tensors \cite{PM85}.   Each  component of the spatial integrand of (\ref{eq:mom}), the components of the momentum density, satisfies a conservation law, a result that relies on both species being dynamic and one that relies on the  constraint equations (\ref{eq:max:4}) being satisfied.  However, in this paper we have fixed the constant ion background by charge neutrality and, consequently,  momentum is not conserved in general.  This lack of conservation does not appear to be widely known,  but it is known that the enforcement of constraints may or may not results in the loss of conservation \cite{MoLeb}.  For example, the single species Vlasov-Poisson system with a fixed constant ion background does indeed conserve momentum.  However, for the streaming Weibel application, it is not diffi
 cult to show that the following 
component is conserved:
\beq
 P_1= \int \xi_1 f\, d{\xi}_1 d\xi_2 dx_2 + \int E_2B_3\, dx_2~,
 \eeq
while the component $P_2$ is not.  Since conservation of $P_1$ relies on the constraint equations and since our computational algorithm does not enforce these constraints, conservation of $P_1$ serves as a measure of the goodness of our method in maintaining the initial satisfaction of the constraints.    {}From Figure \ref{masste1}, we see that all three flux formulations conserve $P_1$ relatively well, but, as expected,   there is  a large accumulating error in $P_2$, particularly for the alternating flux case.

 Similarly, for a general VM system without constraints, the following expression for the total angular momentum is conserved:
 \beq
 \mathbf{L}= \int \mathbf{x}\times \xi \, f\, d\xi d\mathbf{x} 
 + \int \mathbf{x}\times(\mathbf{E}\times\mathbf{B}) \, d\mathbf{x}~. 
 \eeq
 However, because the SW application  breaks symmetry, there is no relevant component of the angular momentum  that is conserved for this problem, but for a more general application one may want to track its conservation. 
 
\smallskip
 
\textbf{Comparison and interpretation:} 
%%%%%%%%% choice 1 %%%%%%%%%%%%%%%
In Figure \ref{keeme1}, we plot the time evolution of the kinetic, electric,  and magnetic energies.  In particular, we  plot the separate components defined  by  $K_1= \frac{1}{2}\int f \xi_1^2 d \xi_1 d \xi_2 dx_2$,  $K_2 = \frac{1}{2}\int f \xi_2^2 d \xi_1 d \xi_2 dx_2$,  ${\rm E}_1 = \frac{1}{2}\int E_1^2 dx_2$,  and ${\rm E}_2 = \frac{1}{2}\int E_2^2 dx_2$.    Figure (a) shows for choice 1 the transference of kinetic energy from one component to the other with a deficit converted into field energy.   This deficit is consistent with energy conservation, as evidenced by Figure \ref{masste1}.  Observe the magnetic and inductive electric fields grow initially at a linear growth rate (comparable to that of Table I of \cite{califano1998ksw}).  Saturation occurs when the electric and magnetic energies simultaneously peak at around $t=70$ in agreement with  \cite{califano1998ksw};  however,  in our case we achieve equipartition at the peak, which may be due to better resolution.
    Here we have also shown the 
longitudinal component $E_2$, not shown in \cite{califano1998ksw}, which in Figure (b) is seen to grow at twice the growth rate.  This behavior was anticipated in \cite{98jpp} in the context of a two-fluid model  and seen in kinetic VM computations of the usual Weibel instability \cite{lopa2009}.    It is due to wave coupling and a modulation of the electron density induced by the spatial modulation of $B^2_3$.  The growth at twice the growth rate of the magnetic field $B_3$  is seen in Figure (b), and the density modulation, including the expected spikes,  is seen in Figure  \ref{density1}.  We have also calculated  the first four Log Fourier modes  of the fields $E_1$, $E_2$, $B_3$, and these are shown  in Figure \ref{logfm1}. 
Here, the $n$-th Log Fourier mode for a function $W(x,t)$ \cite{Heath} is defined as
$$
logF\!M_n(t)=\log_{10} \left(\frac{1}{L} \sqrt{\left|\int_0^L W(x, t) \sin(knx) \, dx \right|^2 +
\left|\int_0^L W(x, t) \cos(knx)\,  dx \right|^2} \right).
$$
In Figures \ref{contour1}   we plot the 2D contours  of $f$ at selected locations  $x_2$ and time $t$,  when the upwind flux is used in the Maxwell solver.  The times chosen correspond to those for the density of Figure \ref{density1}, and we see that at late times considerable fine structure is present, which is consistent with the Log Fourier plots.   For completeness, we also include in Figure \ref{befield1}  plots of the electric and magnetic fields at the final time for our three  fluxes.

%weibel_new/weibelresult/califanorun1/newruns
\begin{figure}[htb]
  \begin{center}
    \subfigure[ Mass]{\includegraphics[width=3in,angle=0]{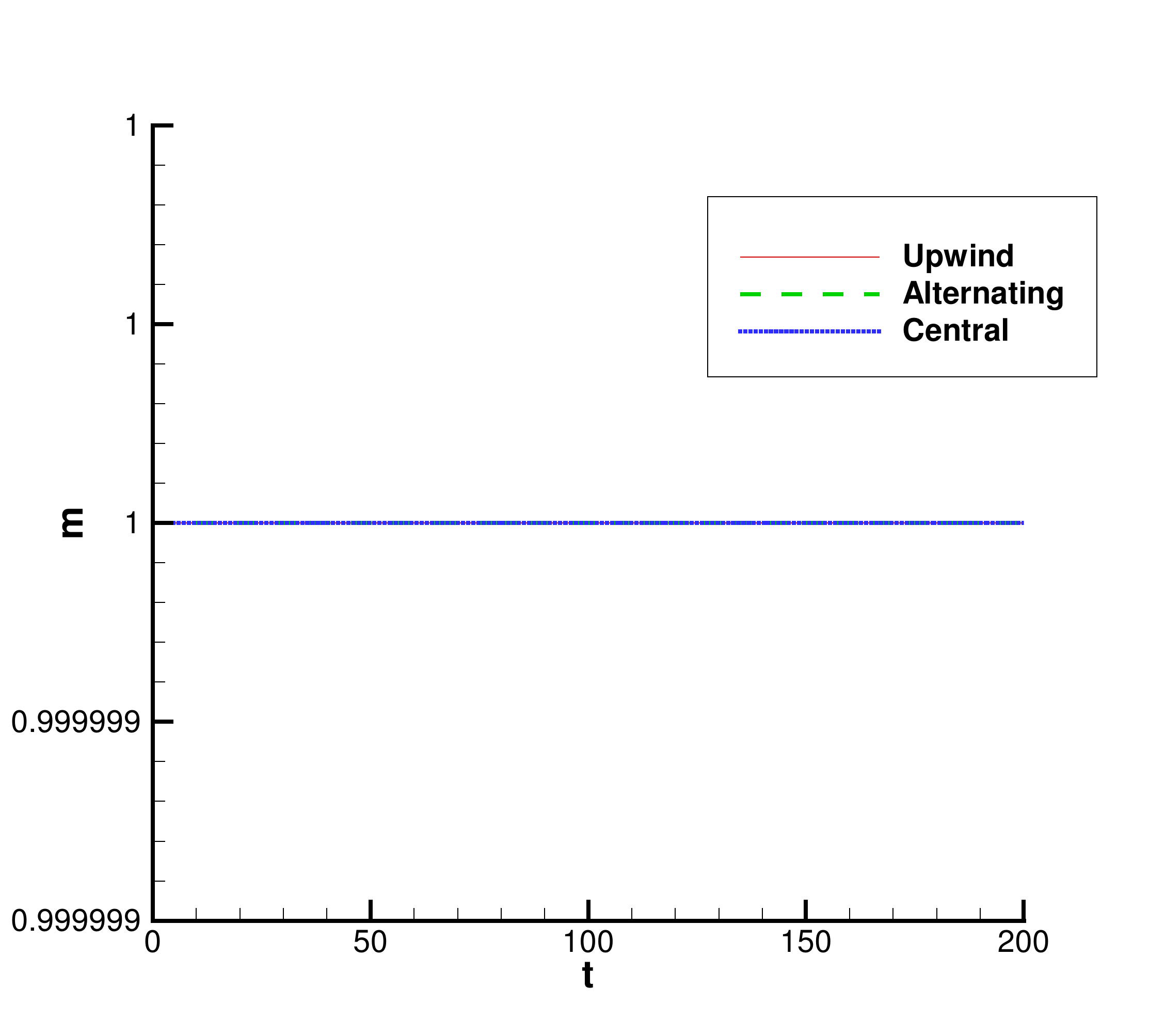}}
    \subfigure[Total energy]{\includegraphics[width=3in,angle=0]{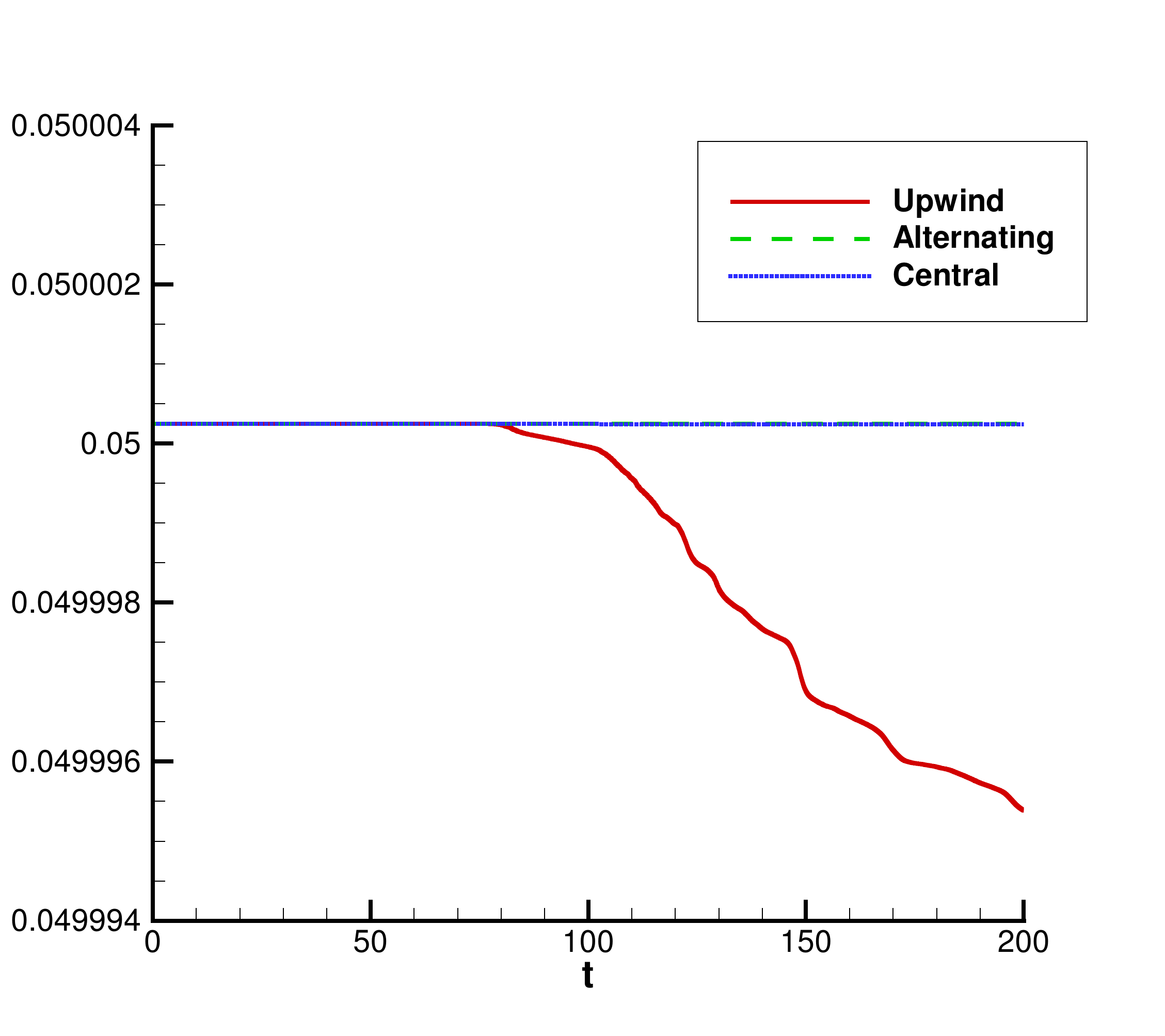}}
%\subfigure[ Relative error for mass]{\includegraphics[width=3in,angle=0]{}}
%\subfigure[Relative error for total energy]{\includegraphics[width=3in,angle=0]{}}
   %     \subfigure[Relative error for total energy of central and alternating fluxes]{\includegraphics[width=3in,angle=0]{}}
              \subfigure[Momentum $P_1$]{\includegraphics[width=3in,angle=0]{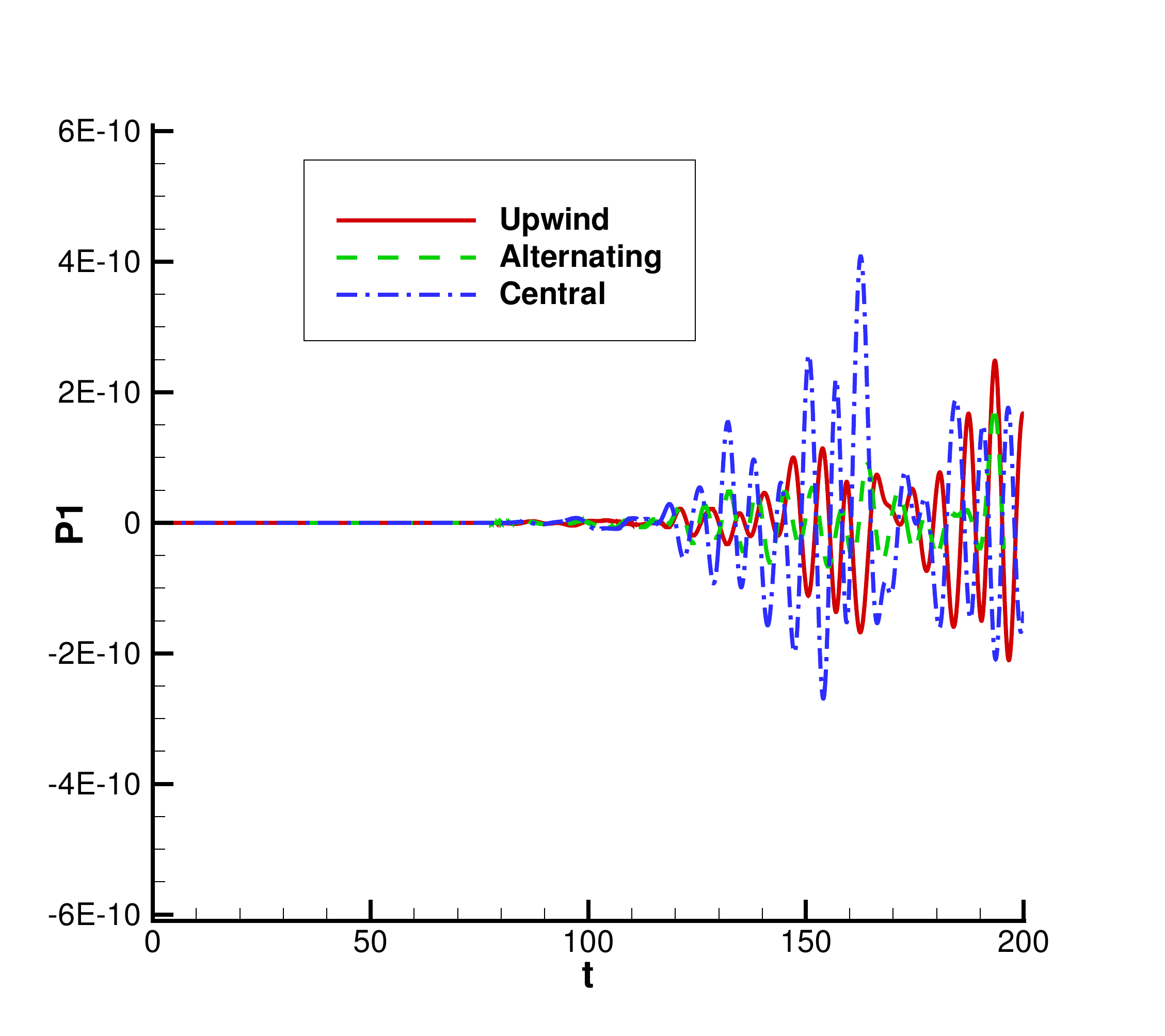}}
                       \subfigure[Momentum $P_2$]{\includegraphics[width=3in,angle=0]{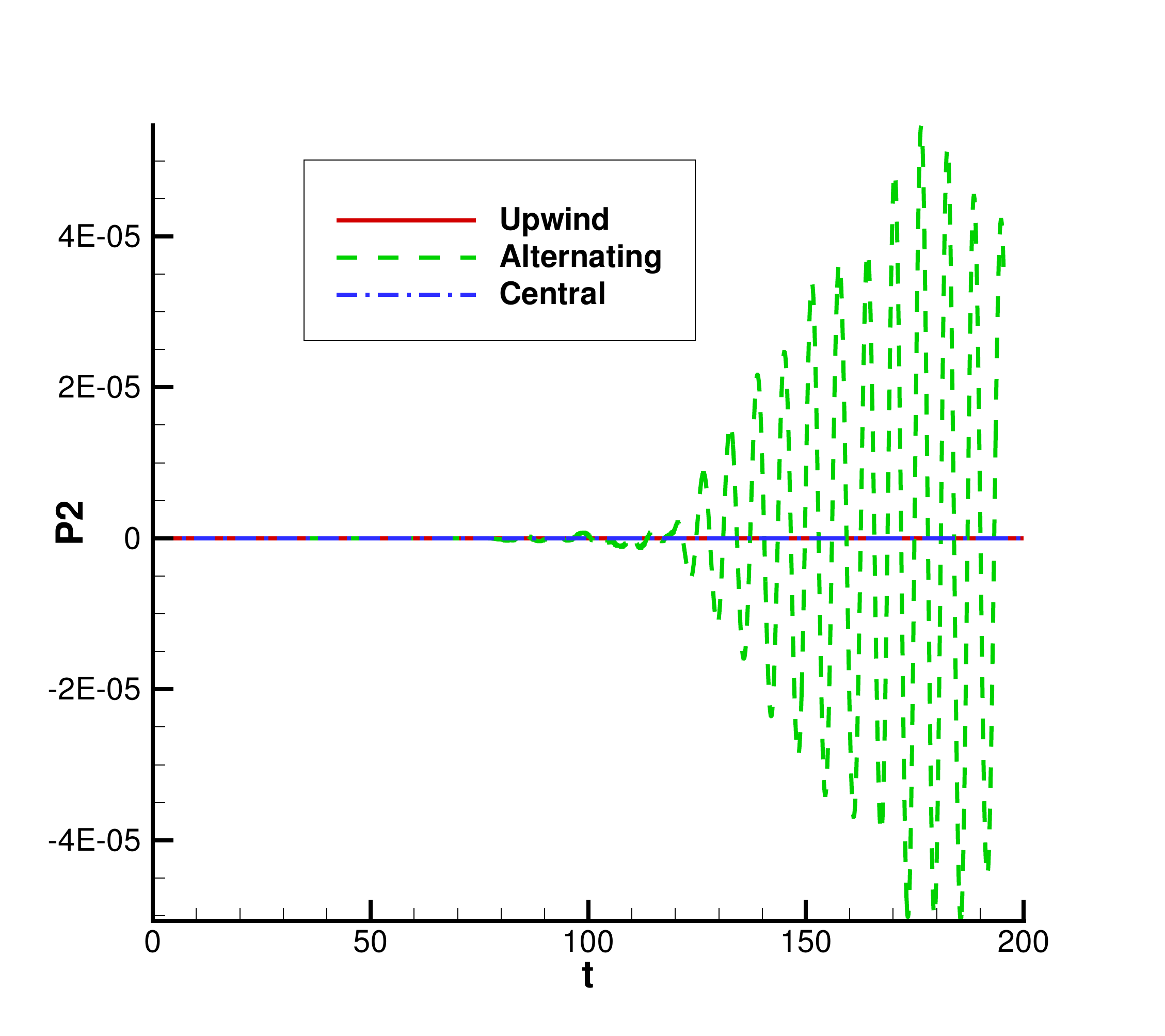}}
  \end{center}
  \caption{Streaming Weibel instability with parameter choice 1 as in Califano \textit{et al.} \cite{califano1998ksw} ($\delta=0.5, v_{0,1}=v_{0,2}=0.3, k_0=0.2$), the symmetric case. The mesh is $100^3$ with piecewise quadratic polynomials. Time evolution of mass, total energy, and momentum for  the three numerical fluxes for the Maxwell's equations.}
%    \caption{Computations of the Vlasov-Maxwell as in Califano parameter choice 1 ($\delta=0.5, v_{0,1}=v_{0,2}=0.3, k_0=0.2$), the symmetric case. The mesh is $100^3$, piecewise quadratic polynomials. Time evolution of mass and total energy with three numerical fluxes for the Maxwell's equation as indicated.}
\label{masste1}
\end{figure}

%weibel_new/weibelresult/califanorun1
\begin{figure}[htb]
  \begin{center}
    \subfigure[ Parameter Choice 1. Electric field, upwind flux]{\includegraphics[width=3in,angle=0]{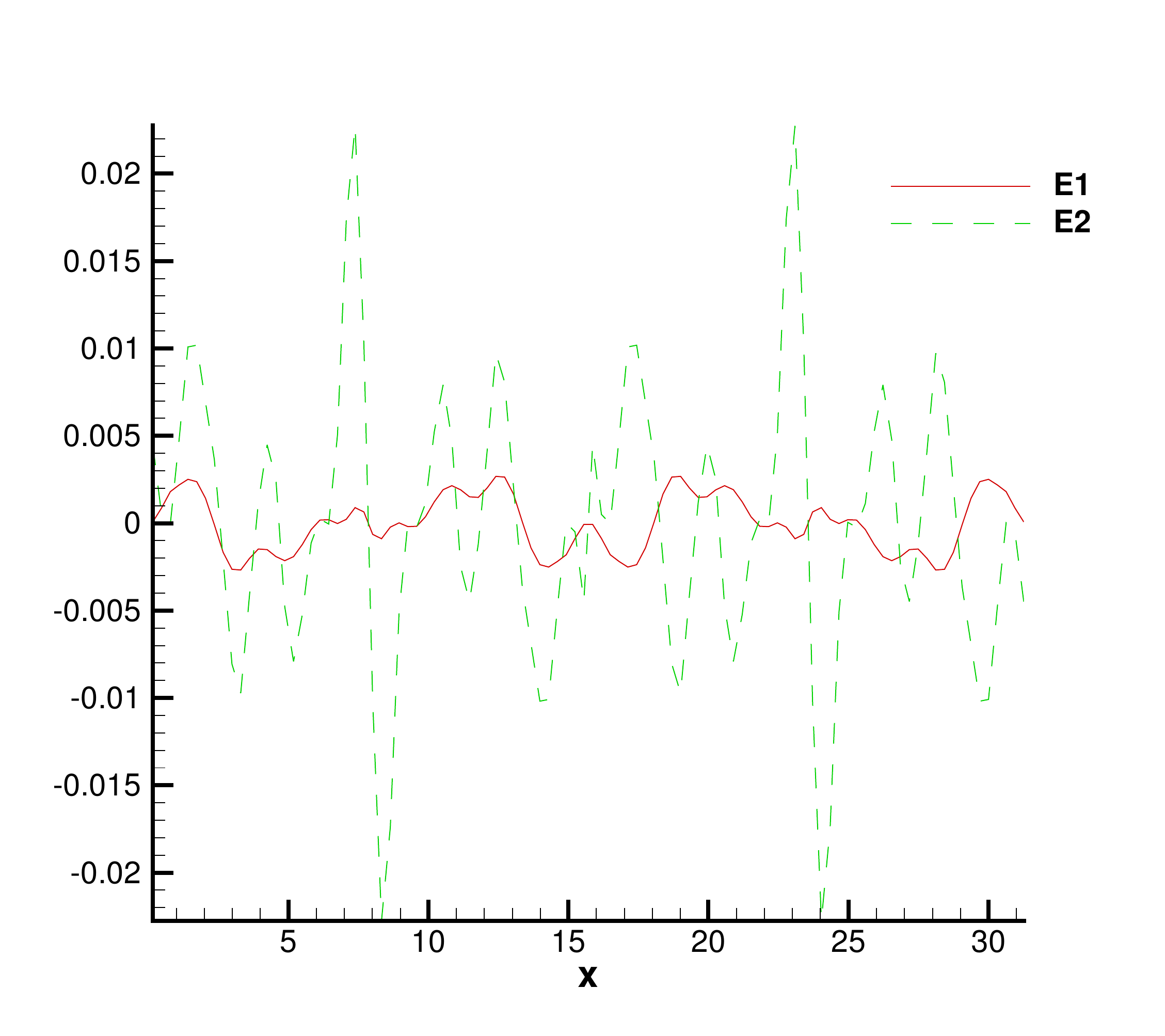}}
        \subfigure[ Parameter Choice 2.  Electric field, upwind flux]{\includegraphics[width=3in,angle=0]{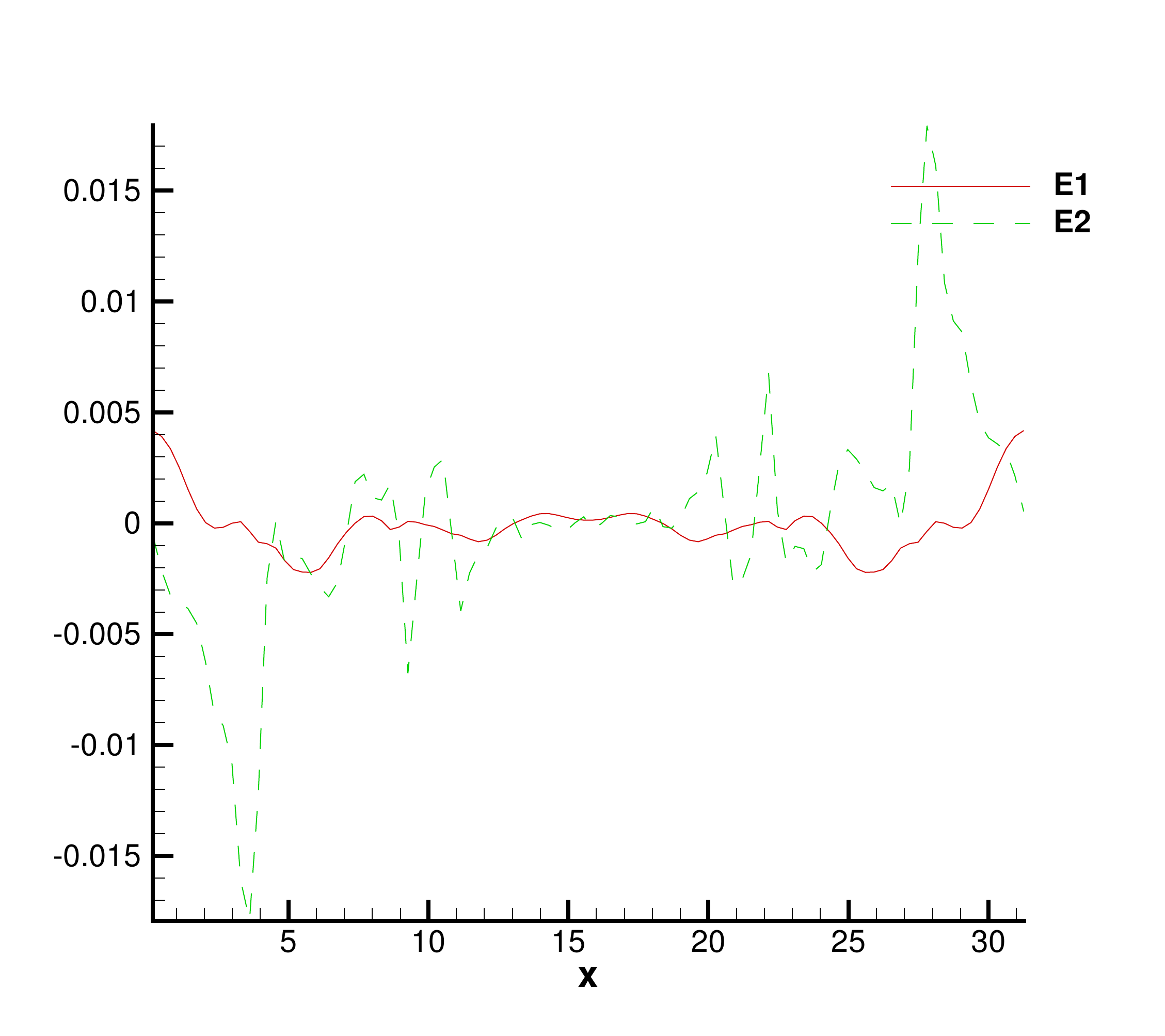}}
            \subfigure[Parameter Choice 1.  Magnetic field, upwind flux]{\includegraphics[width=3in,angle=0]{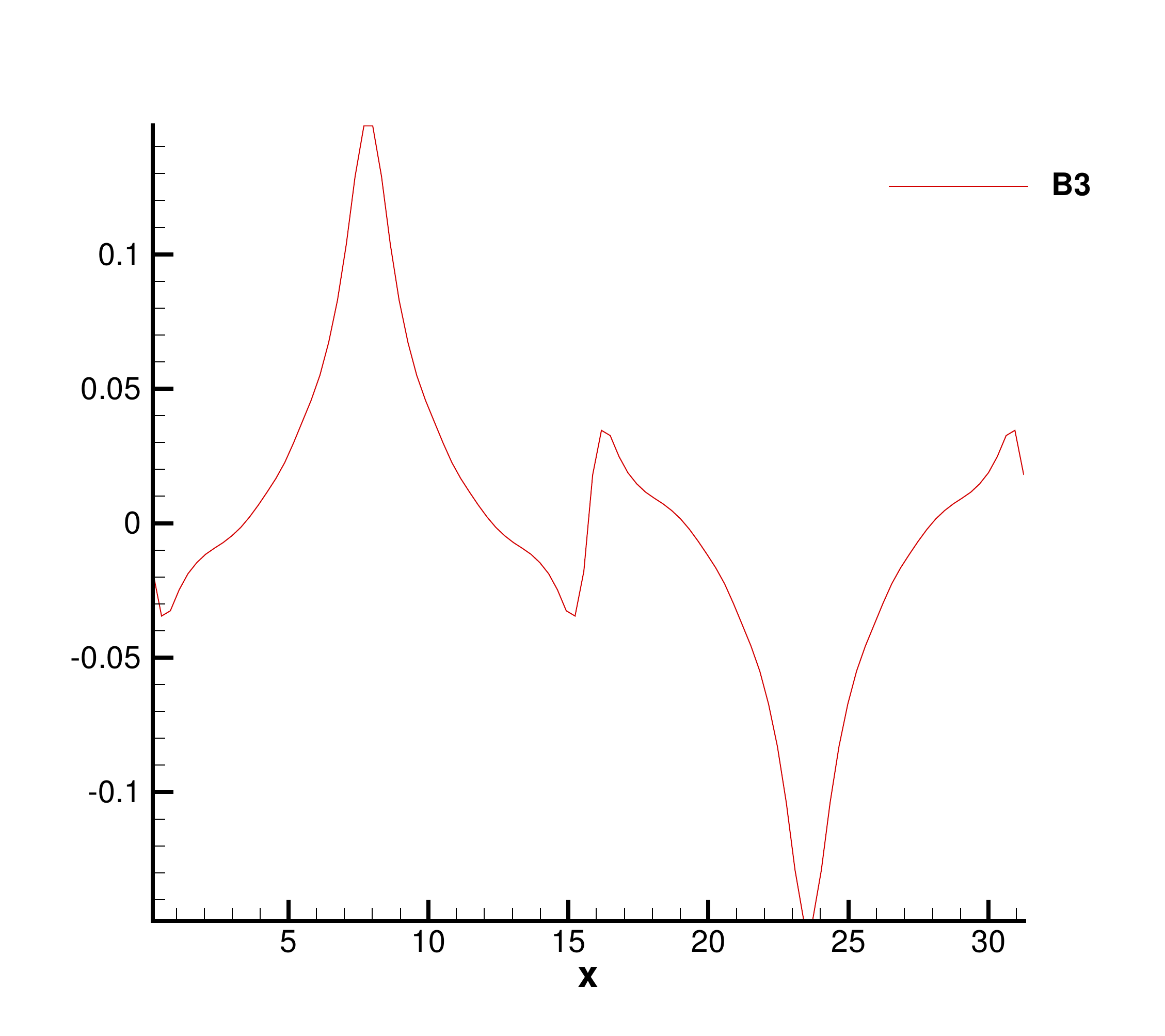}}
    \subfigure[Parameter Choice 2.  Magnetic field, upwind flux]{\includegraphics[width=3in,angle=0]{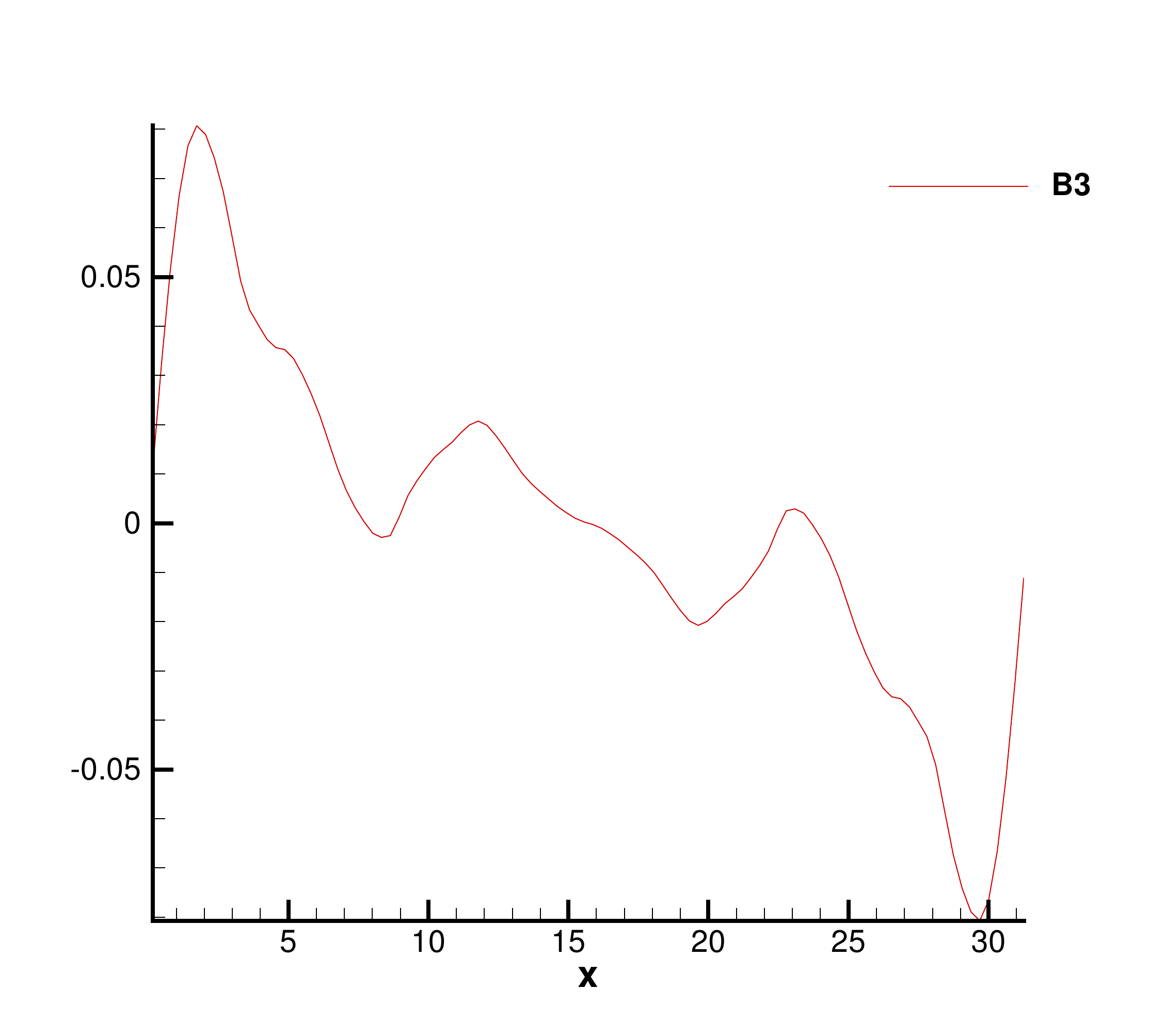}}
  %      \subfigure[ Electric field, central flux]{\includegraphics[width=3in,angle=0]{}}
   % \subfigure[Magnetic field, central flux]{\includegraphics[width=3in,angle=0]{}}
     %   \subfigure[ Electric field, alternating flux]{\includegraphics[width=3in,angle=0]{}}
    %\subfigure[Magnetic field, alternating flux]{\includegraphics[width=3in,angle=0]{}}
  \end{center}
%  \caption{Computations of the Vlasov-Maxwell as in Califano parameter choice 1, the symmetric case. The mesh is $100^3$, piecewise quadratic polynomials. Electric and magnetic field at $T=200$.}
    \caption{Streaming Weibel instability. The mesh is $100^3$ with piecewise quadratic polynomials. The electric and magnetic fields at $T=200$.}
\label{befield1}
\end{figure}

\begin{comment}
\begin{figure}[htb]
  \begin{center}
\subfigure[ Relative error for mass with small domain ]{\includegraphics[width=3in,angle=0]{run1massrelative}}
\subfigure[ Relative error for mass with large domain]{\includegraphics[width=3in,angle=0]{}}
\subfigure[Relative error for total energy with small domain]{\includegraphics[width=3in,angle=0]{run1terelative}}
\subfigure[Relative error for total energy with large domain]{\includegraphics[width=3in,angle=0]{}}
        \subfigure[Relative error for total energy of central and alternating fluxes with small domain]{\includegraphics[width=3in,angle=0]{run1terelativeblow}}
               \subfigure[Relative error for total energy of central and alternating fluxes with large domain]{\includegraphics[width=3in,angle=0]{}}
  \end{center}
  \caption{Streaming Weibel instability with parameter choice 1 as in Califano \textit{et al.} \cite{califano1998ksw} ($\delta=0.5, v_{0,1}=v_{0,2}=0.3, k_0=0.2$), the symmetric case. Effects of enlarging the domain. The runs are conducted on the same mesh size, but with different domains in the velocity space.  The small domain is $\Oxi=[-1.2, 1.2]^2$, the large domain is $\Oxi=[-1.5, 1.5]^2$}
\label{comparedomain}
\end{figure}
\end{comment}

\begin{figure}[htb]
  \begin{center}
%    \subfigure[ Kinetic energy, upwind flux]{\includegraphics[width=3in,angle=0]{run1keupwind}}
%    \subfigure[Electric and magnetic energy, upwind flux]{\includegraphics[width=3in,angle=0]{run1emeupwind}}
 %       \subfigure[ Kinetic energy, central flux]{\includegraphics[width=3in,angle=0]{run1kecentral}}
 %   \subfigure[Electric and magnetic energy, central flux]{\includegraphics[width=3in,angle=0]{run1emecentral}}
 %       \subfigure[ Kinetic energy]{\includegraphics[width=3in,angle=0]{run1kealt}}
 \subfigure[ Parameter Choice 1. Kinetic energies]{\includegraphics[width=2.5in,angle=0]{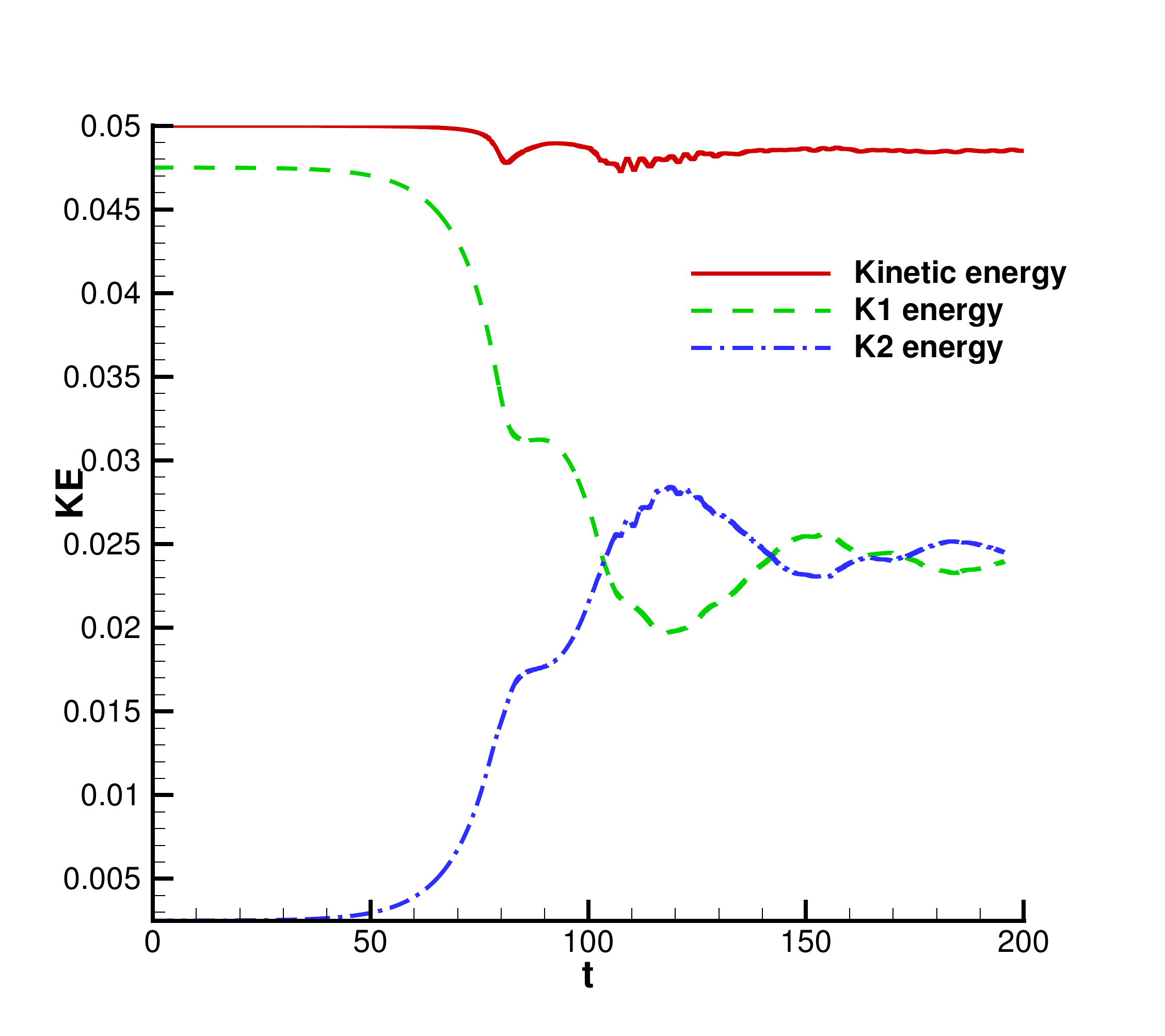}}
    \subfigure[Parameter Choice 1. Field energies]{\includegraphics[width=3.5in,angle=0]{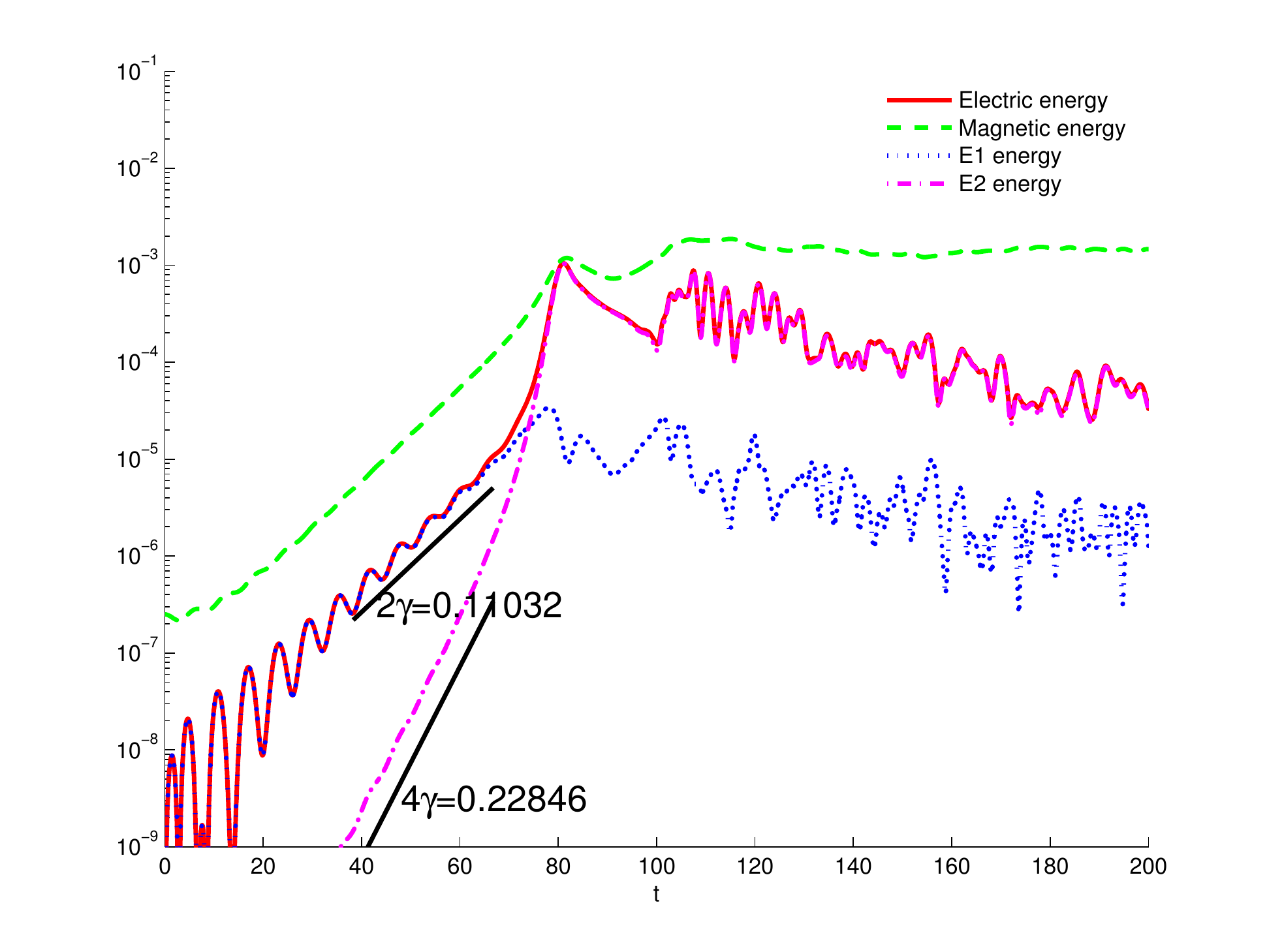}}
     \subfigure[Parameter Choice 2.  Kinetic energies]{\includegraphics[width=2.5in,angle=0]{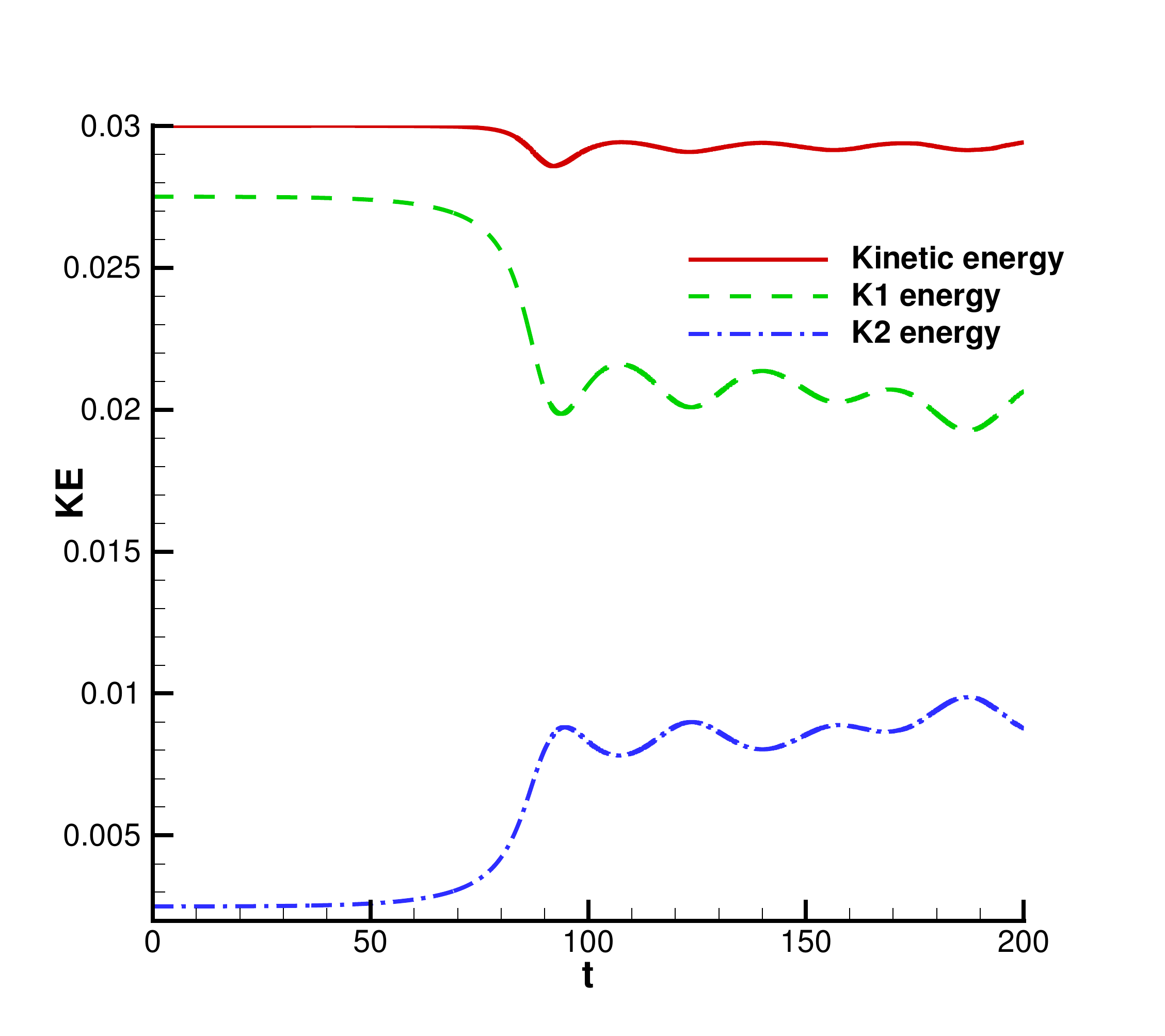}}
    \subfigure[Parameter Choice 2. Field energies]{\includegraphics[width=3.5in,angle=0]{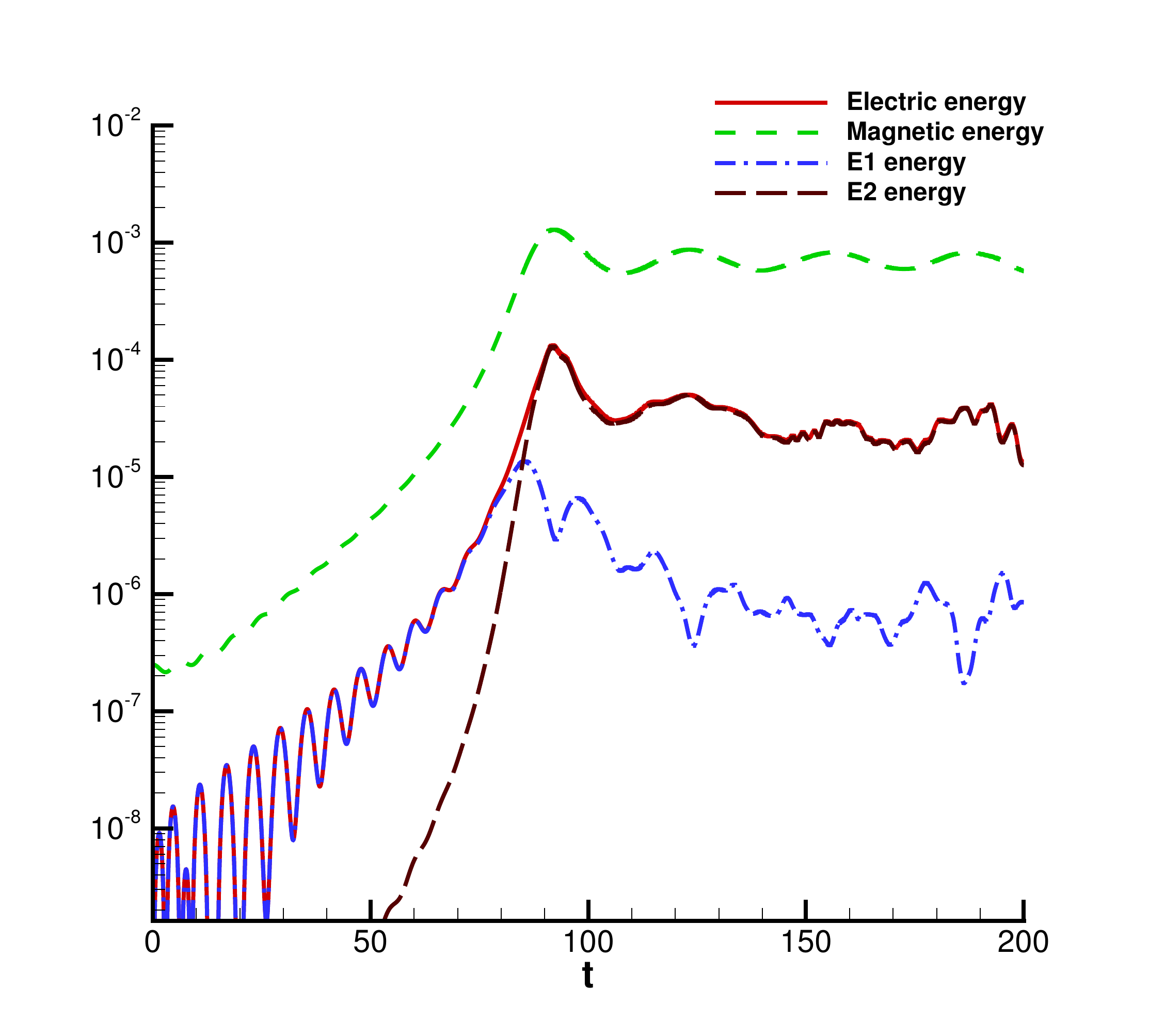}}
  \end{center}
%  \caption{Computations of the Vlasov-Maxwell as in Califano parameter choice 1 ($\delta=0.5, v_{0,1}=v_{0,2}=0.3, k_0=0.2$). The mesh is $100^3$, piecewise quadratic polynomials. Time evolution of kinetic, electric and magnetic energy with three numerical fluxes for the Maxwell's equation as indicated.}
    \caption{Streaming Weibel instability. The mesh is $100^3$ with piecewise quadratic polynomials. Time evolution of kinetic, electric  and magnetic energies  by alternating flux for the Maxwell's equations. }
\label{keeme1}
\end{figure}

\begin{comment}
\begin{figure}[htb]
  \begin{center}
\includegraphics[width=3in,angle=0]{}
  \end{center}
    \caption{Growth rates for streaming Weibel instability with parameter choice 1 as in \cite{califano1998ksw} ($\delta=0.5, v_{0,1}=v_{0,2}=0.3, k_0=0.2$). The mesh is $100^3$ with piecewise quadratic polynomials. Time evolution of kinetic, electric  and magnetic energy by alternating flux for the Maxwell's equations. }
\label{gamma}
\end{figure}
\end{comment}

\begin{figure}[htb]
  \begin{center}
 %   \subfigure[ $t=0.$]{\includegraphics[width=3in,angle=0]{}}
    \subfigure[ Parameter Choice 1. $ t=55.$]{\includegraphics[width=2.9in,angle=0]{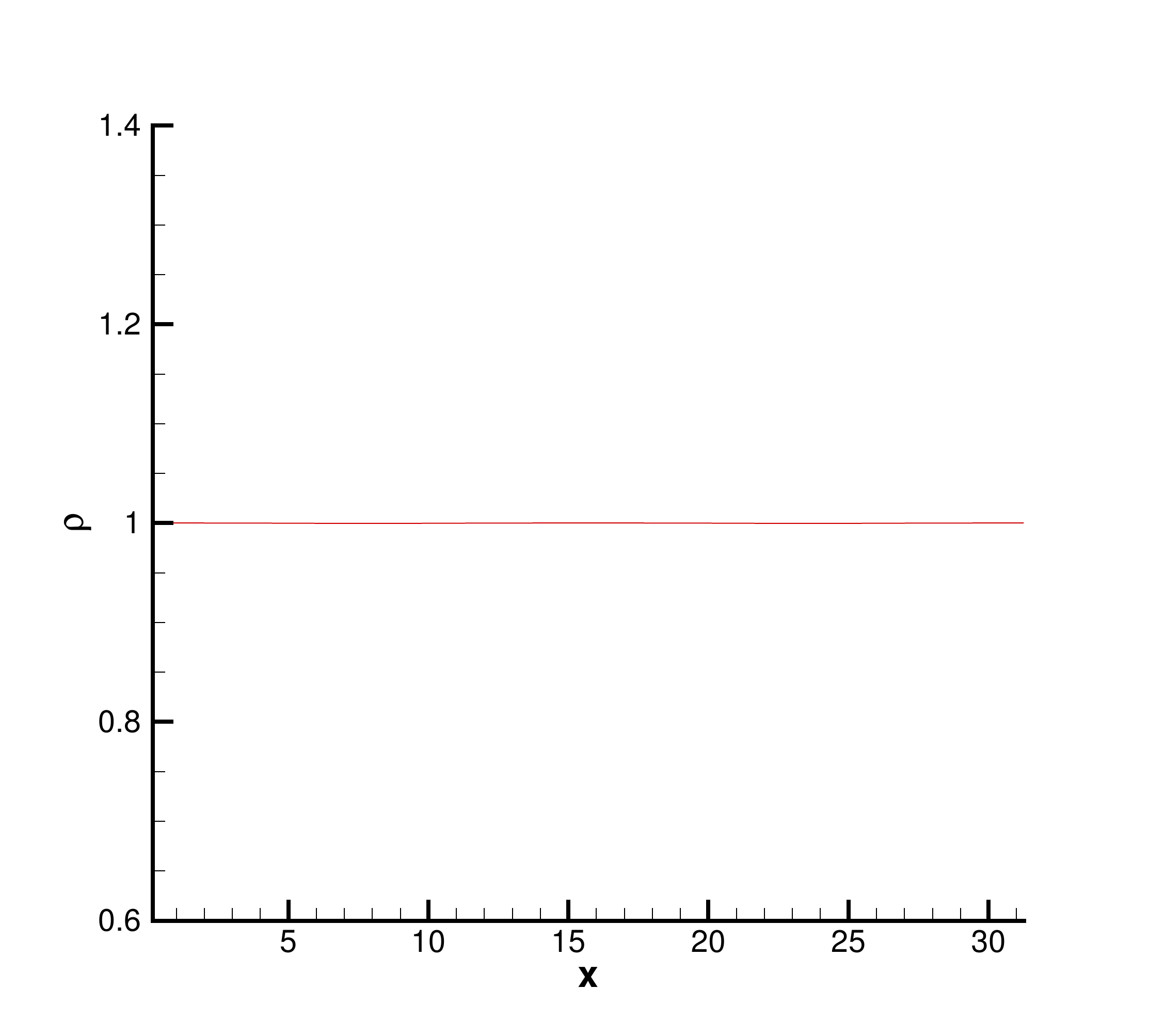}}
        \subfigure[ Parameter Choice 2. $ t=55.$]{\includegraphics[width=2.9in,angle=0]{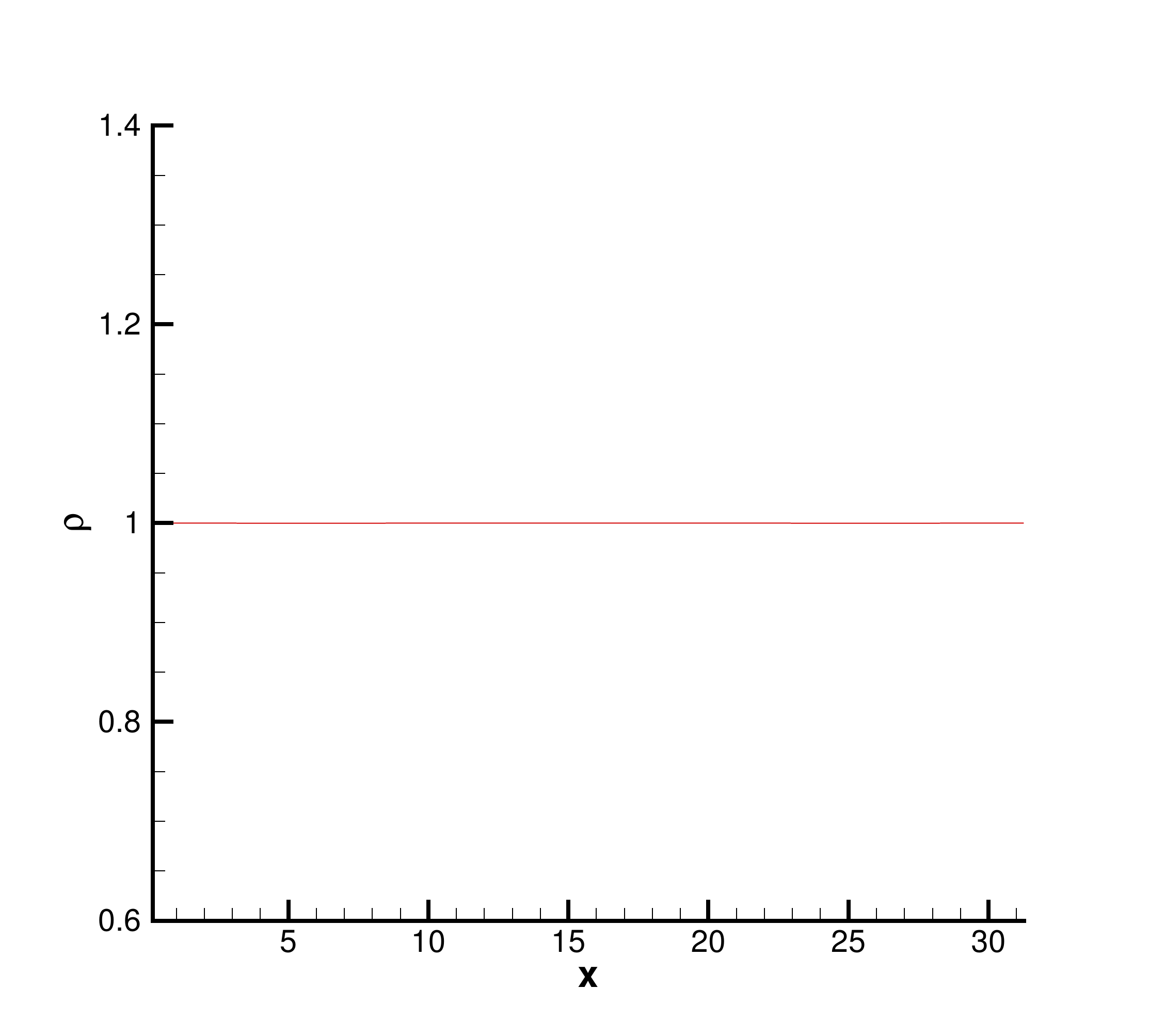}}
        \subfigure[ Parameter Choice 1. $ t=82.$]{\includegraphics[width=2.9in,angle=0]{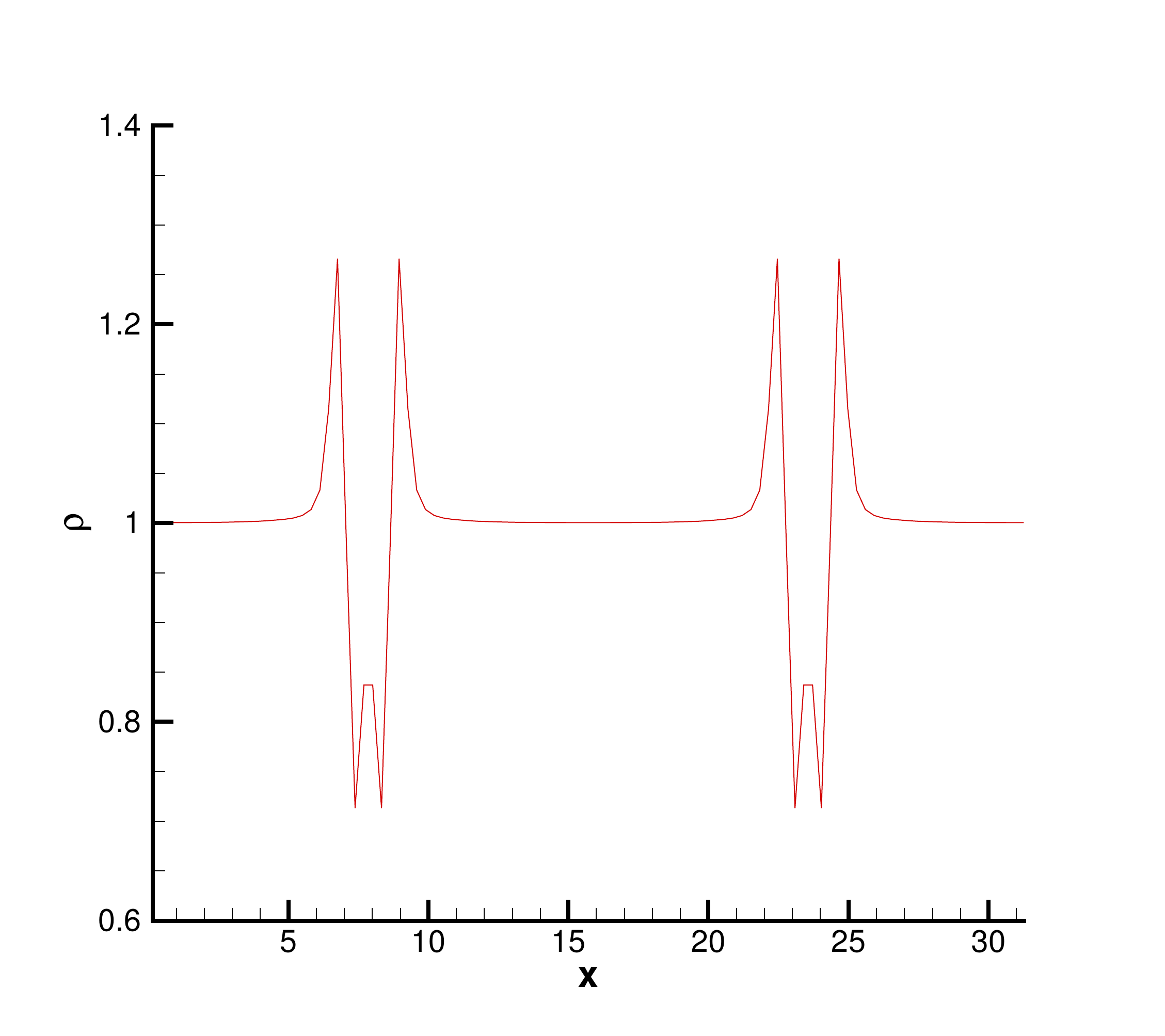}}
                \subfigure[ Parameter Choice 2. $ t=82.$]{\includegraphics[width=2.9in,angle=0]{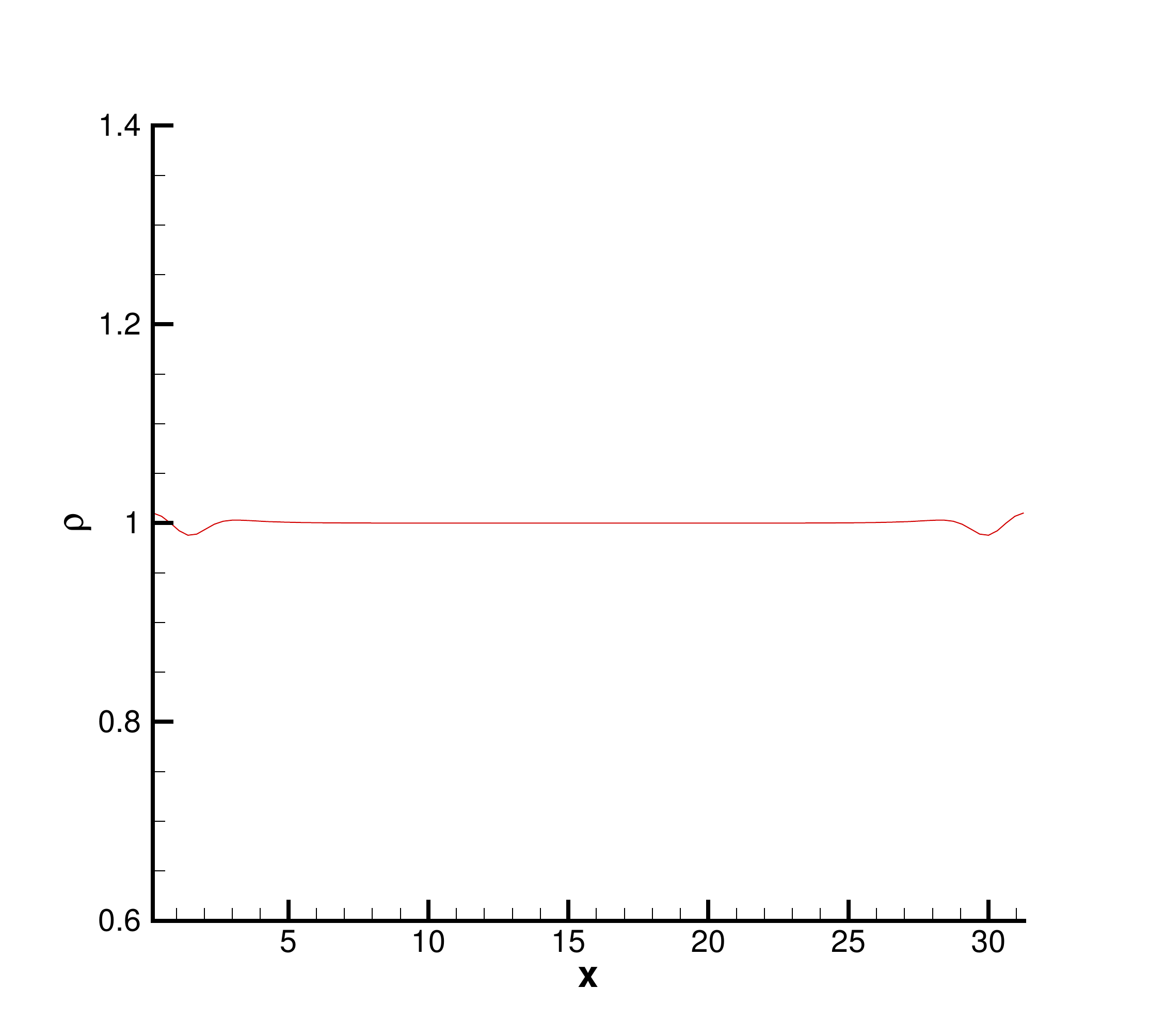}}
     %       \subfigure[ $t=100.$]{\includegraphics[width=3in,angle=0]{}}
                \subfigure[ Parameter Choice 1. $ t=125.$]{\includegraphics[width=2.9in,angle=0]{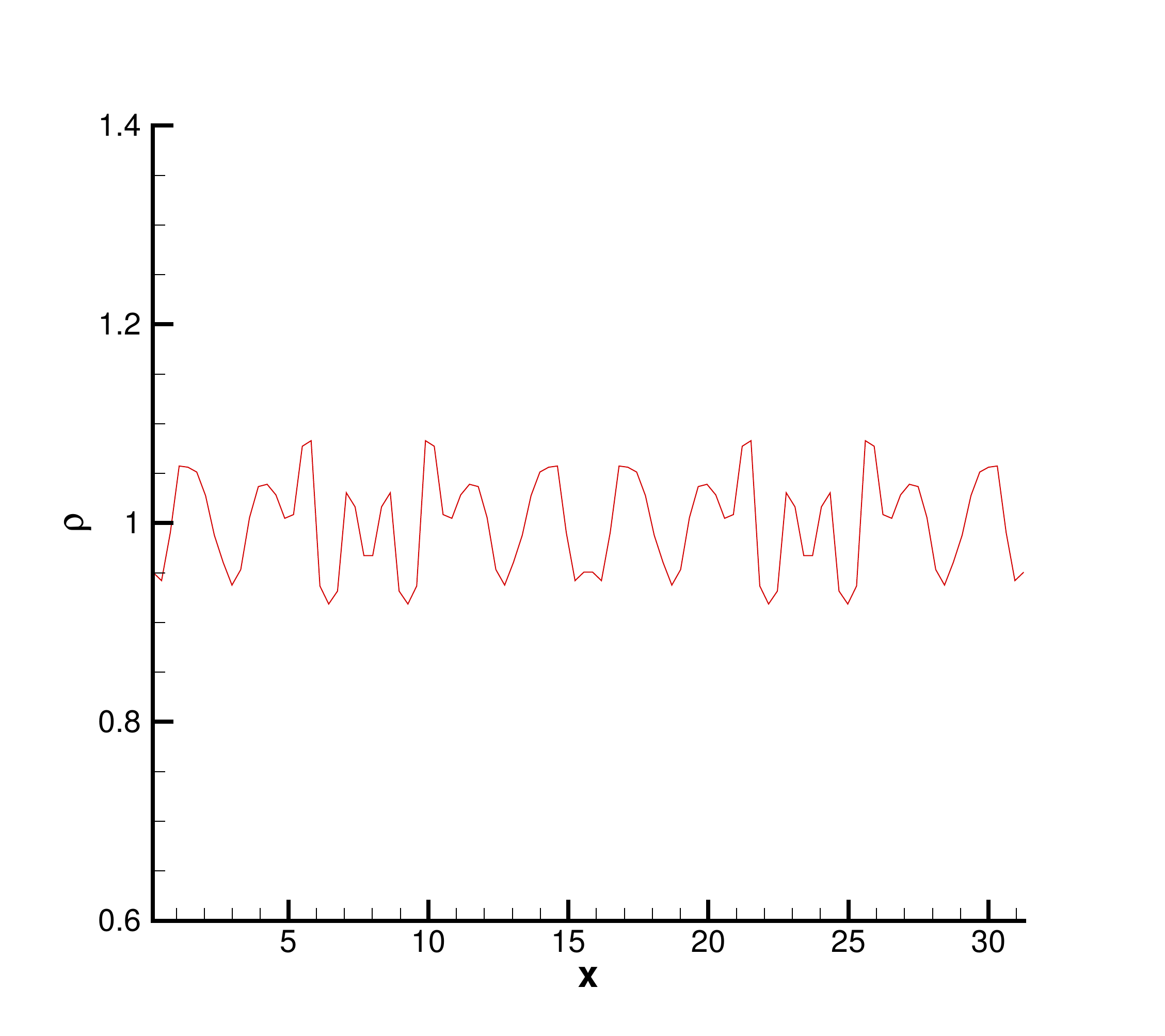}}
                                \subfigure[ Parameter Choice 2. $ t=125.$]{\includegraphics[width=2.9in,angle=0]{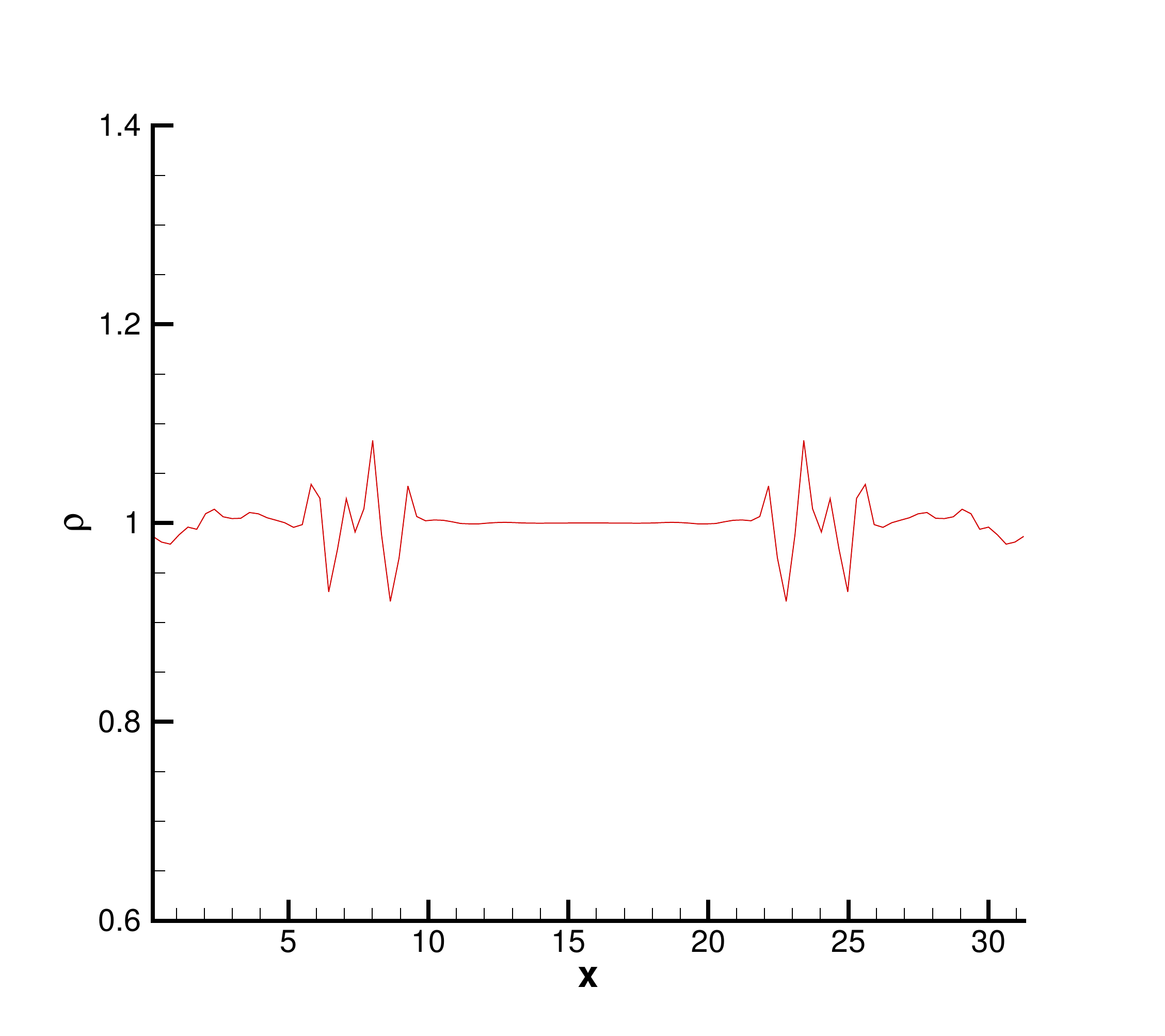}}
         %           \subfigure[ $ t=175.$]{\includegraphics[width=3in,angle=0]{}}
  \end{center}
  \caption{Plots of the computed density function $\rho_h$ for the streaming Weibel instability at selected  time $t$. The mesh is $100^3$ with piecewise quadratic polynomials. The upwind flux is applied. }
\label{density1}
\end{figure}

\begin{figure}[htb]
  \begin{center}
    \subfigure[Parameter Choice 1. Log Fourier modes of $E_1$]{\includegraphics[width=2.9in,angle=0]{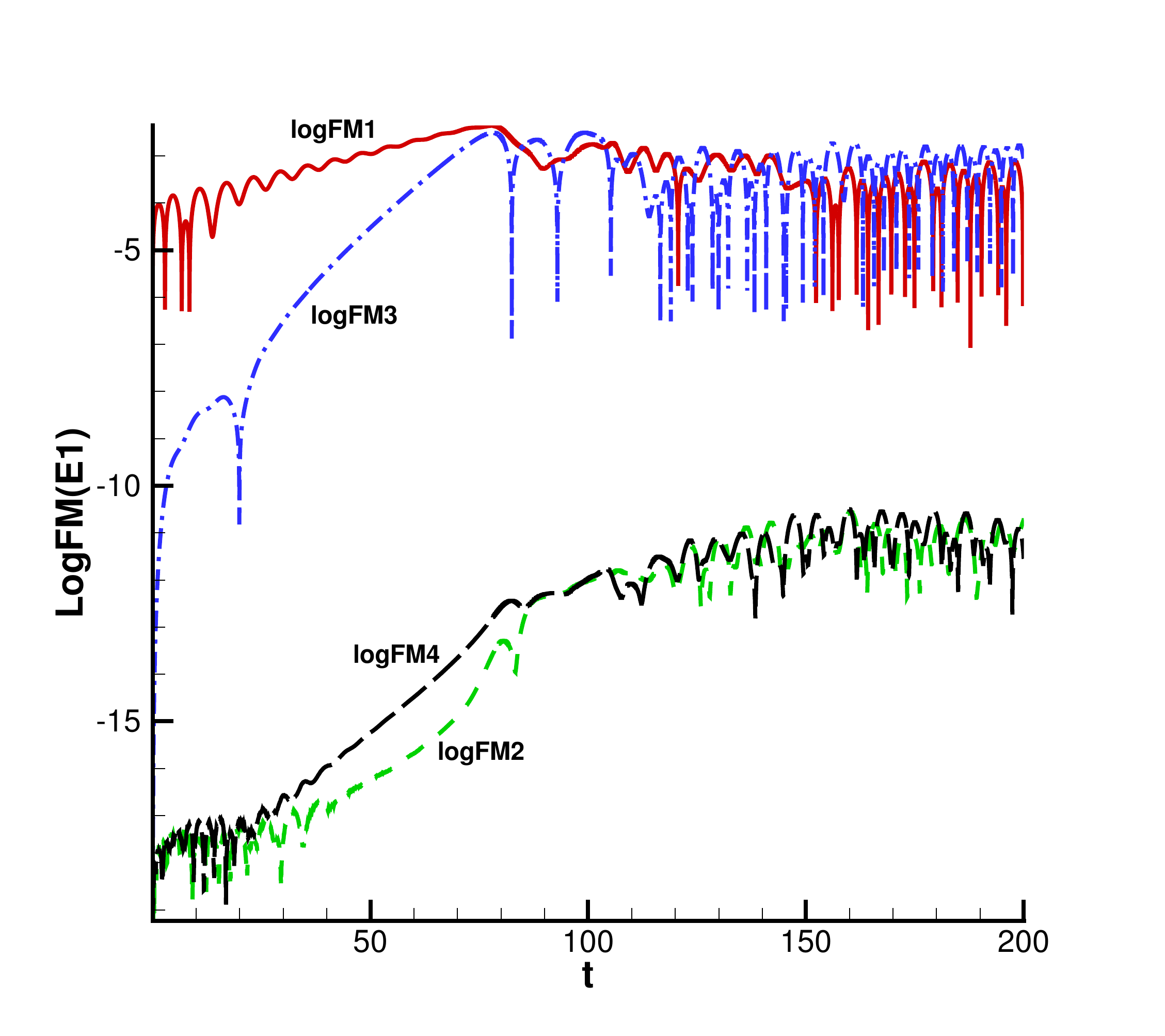}}
            \subfigure[Parameter Choice 2. Log Fourier modes of $E_1$]{\includegraphics[width=2.9in,angle=0]{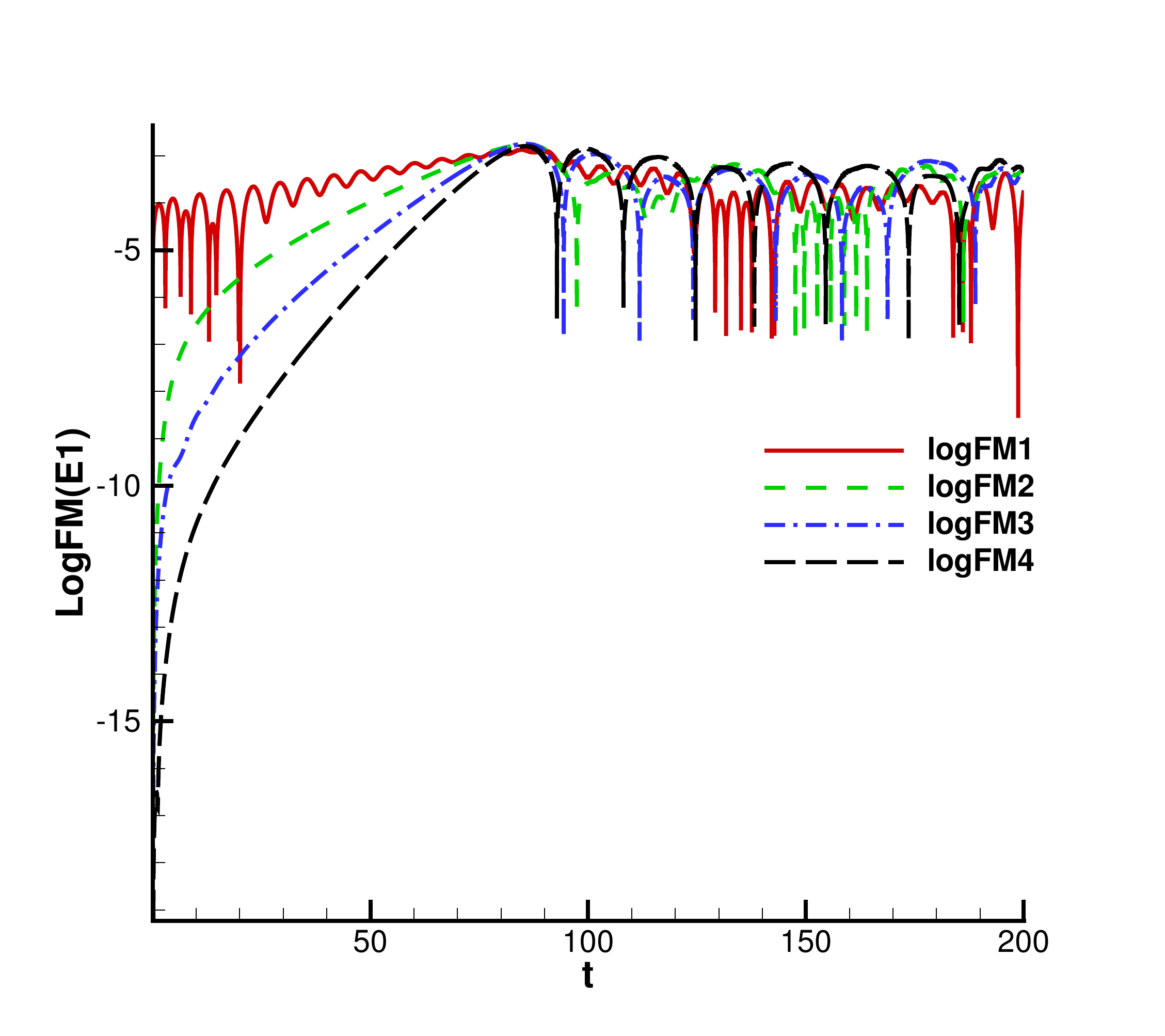}}
                \subfigure[Parameter Choice 1. Log Fourier modes of $E_2$]{\includegraphics[width=2.9in,angle=0]{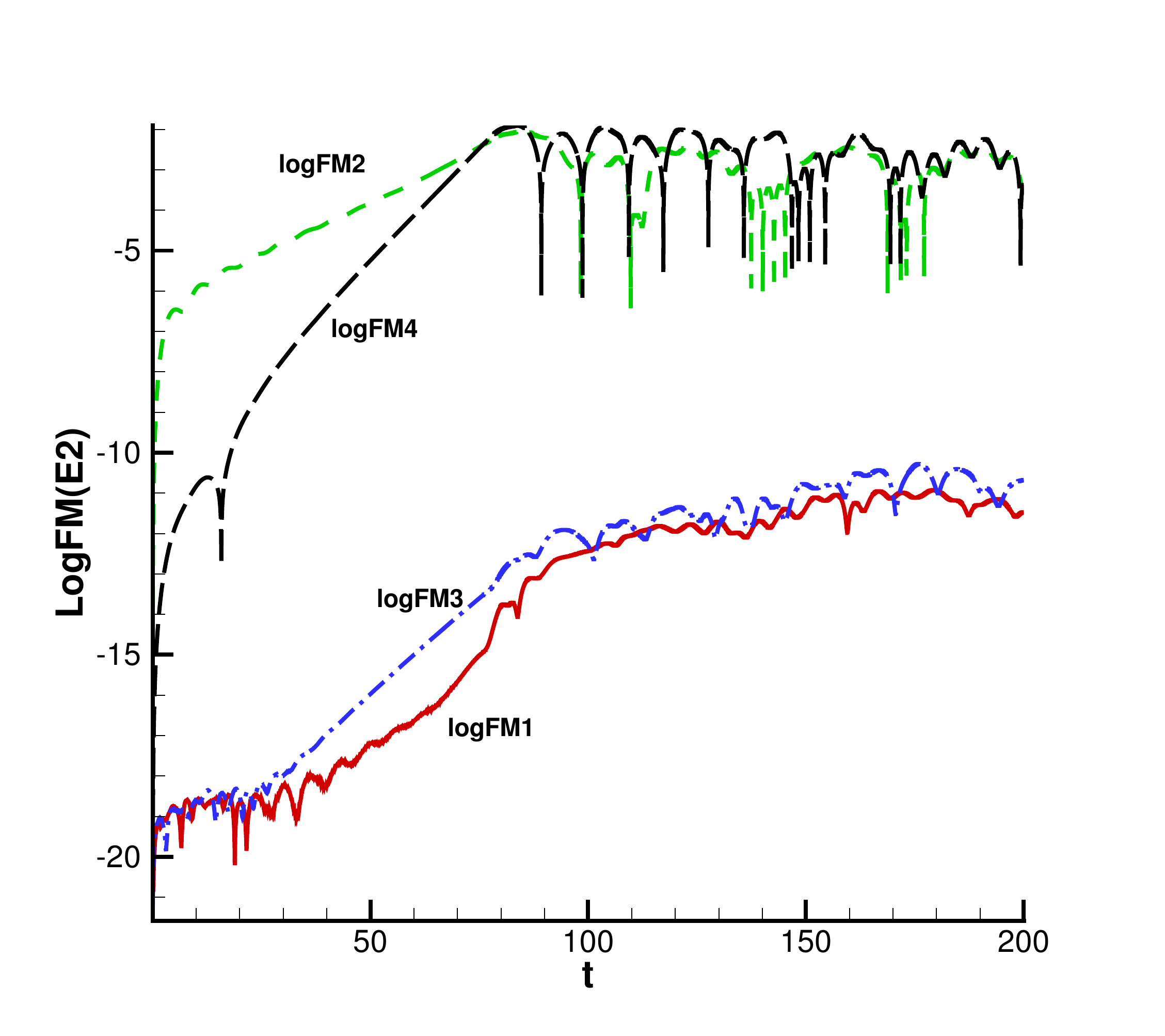}}
                    \subfigure[Parameter Choice 2. Log Fourier modes of $E_2$]{\includegraphics[width=2.9in,angle=0]{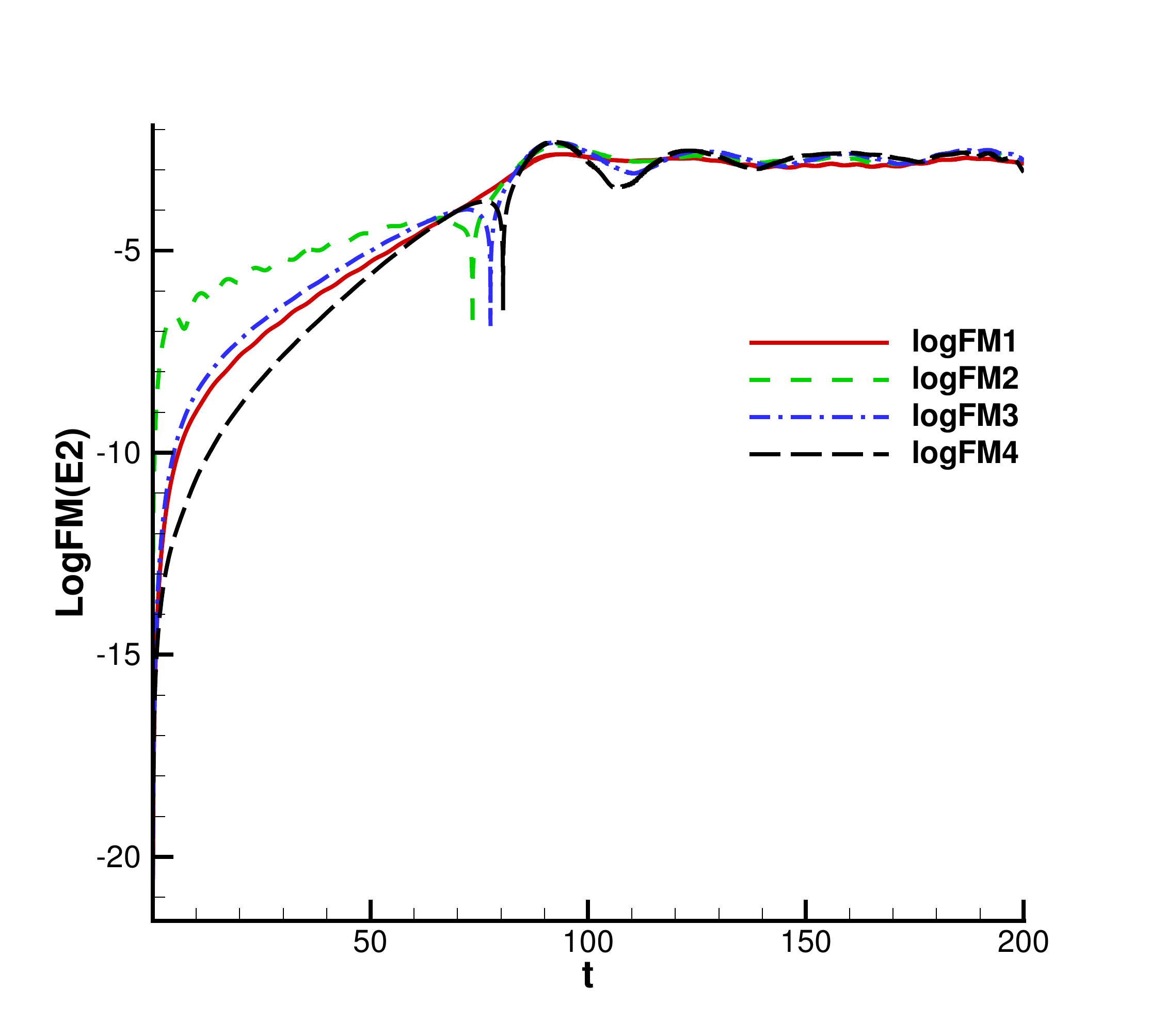}}
        \subfigure[Parameter Choice 1. Log Fourier modes of $B_3$]{\includegraphics[width=2.9in,angle=0]{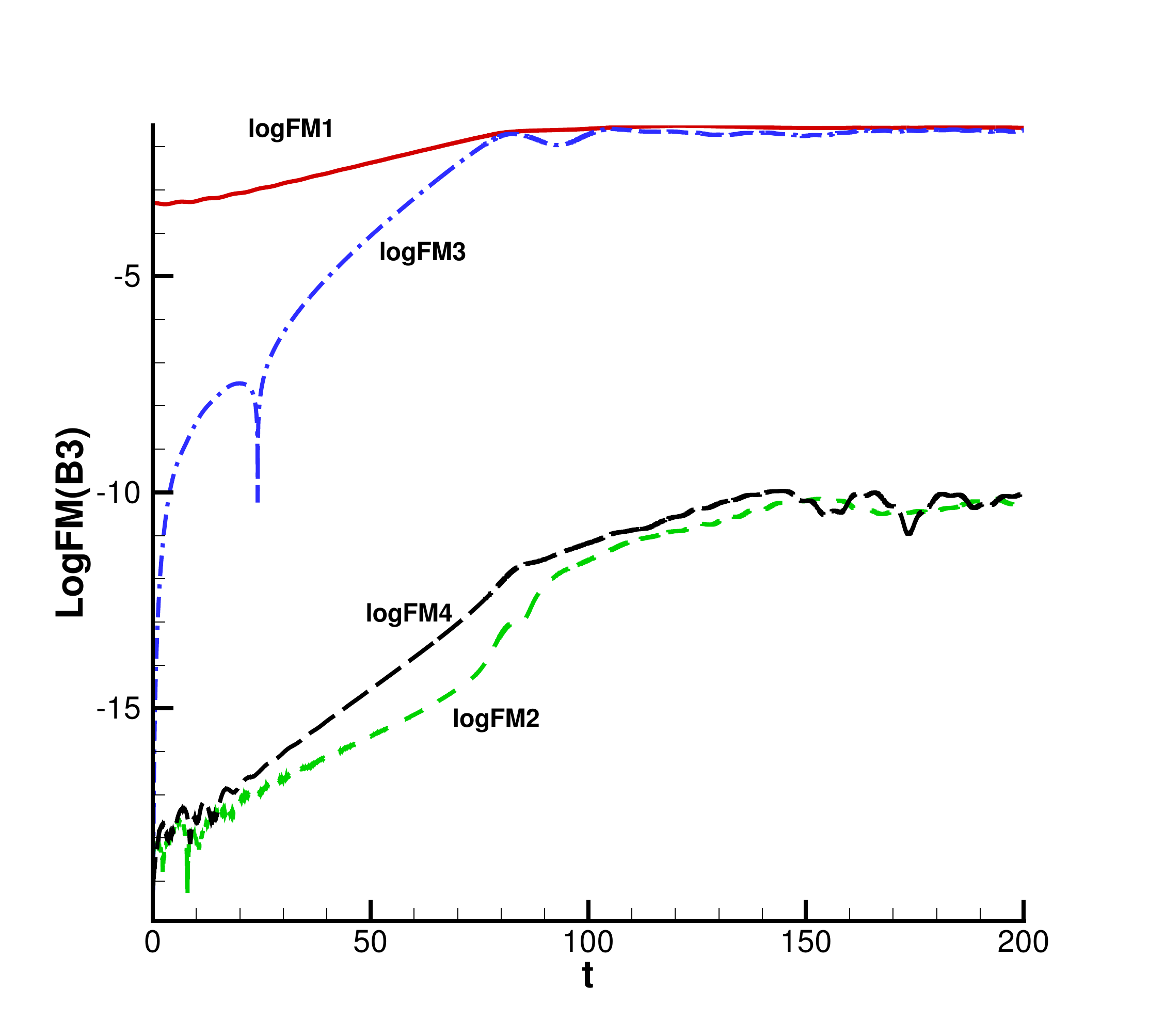}}
        \subfigure[Parameter Choice 2. Log Fourier modes of $B_3$]{\includegraphics[width=2.9in,angle=0]{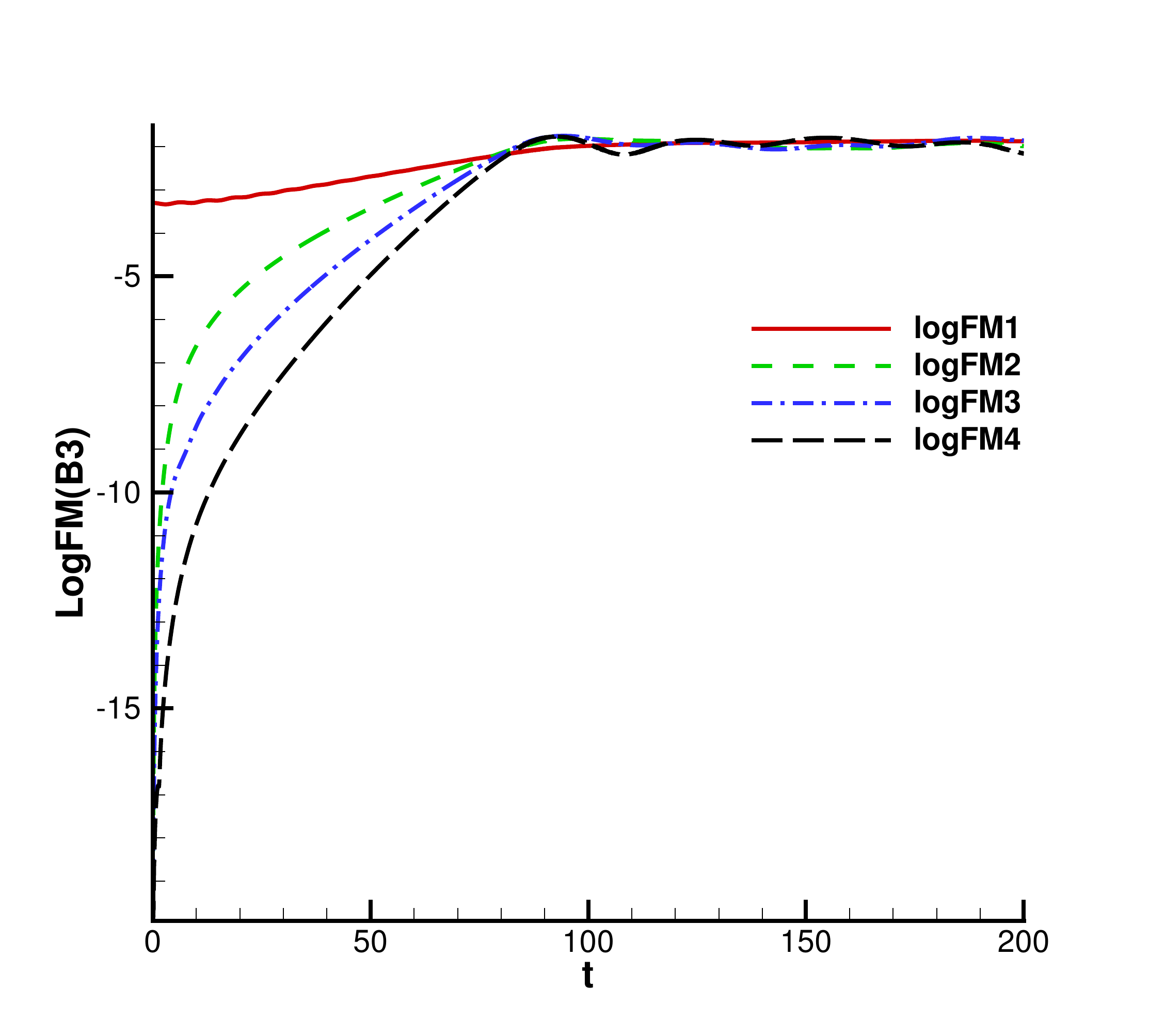}}
  \end{center}
    \caption{Streaming Weibel instability. The mesh is $100^3$ with piecewise quadratic polynomials. The first four Log Fourier modes of $E_1$, $E_2$, $B_3$ computed by the alternating flux for the Maxwell's equations.}
\label{logfm1}
\end{figure}

\begin{figure}[htb]
  \begin{center}
%    \subfigure[ $x_2=0.05 \pi, \, t=0.$]{\includegraphics[width=3in,angle=0]{}}
 %   \subfigure[ $x_2=4.95 \pi,  \, t=0.$]{\includegraphics[width=3in,angle=0]{}}
        \subfigure[ Parameter Choice 1. $x_2=0.05 \pi,  \, t=55.$]{\includegraphics[width=2.9in,angle=0]{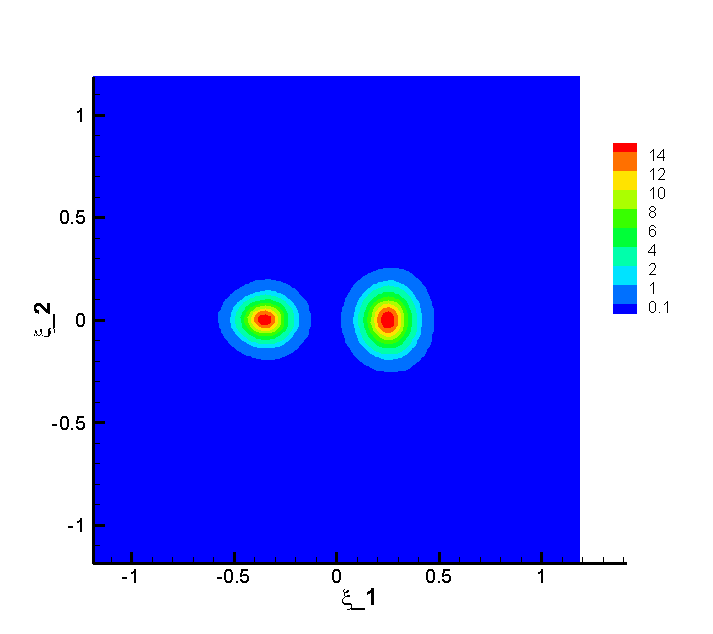}}
                \subfigure[ Parameter Choice 2. $x_2=0.05 \pi,  \, t=55.$]{\includegraphics[width=2.9in,angle=0]{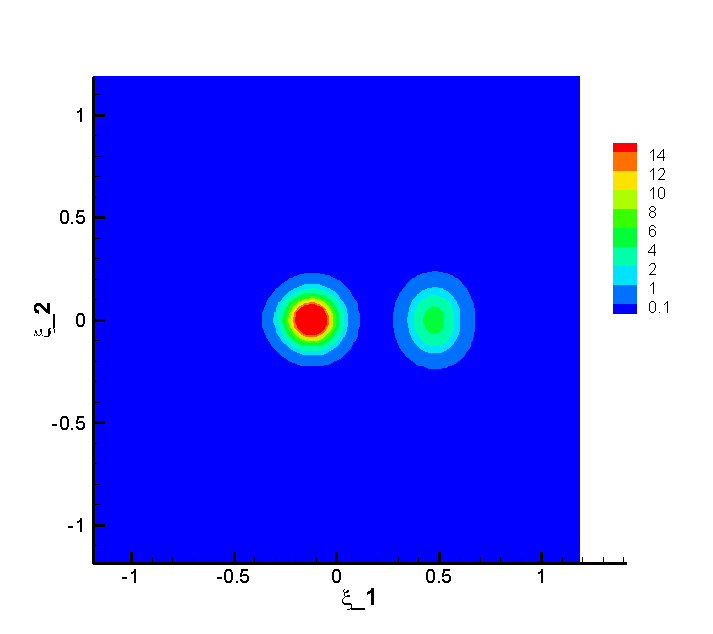}}
    %        \subfigure[ $x_2=4.95 \pi,  \, t=55.$]{\includegraphics[width=3in,angle=0]{}}
                \subfigure[Parameter Choice 1.  $x_2=0.05 \pi, \,  t=82.$]{\includegraphics[width=2.9in,angle=0]{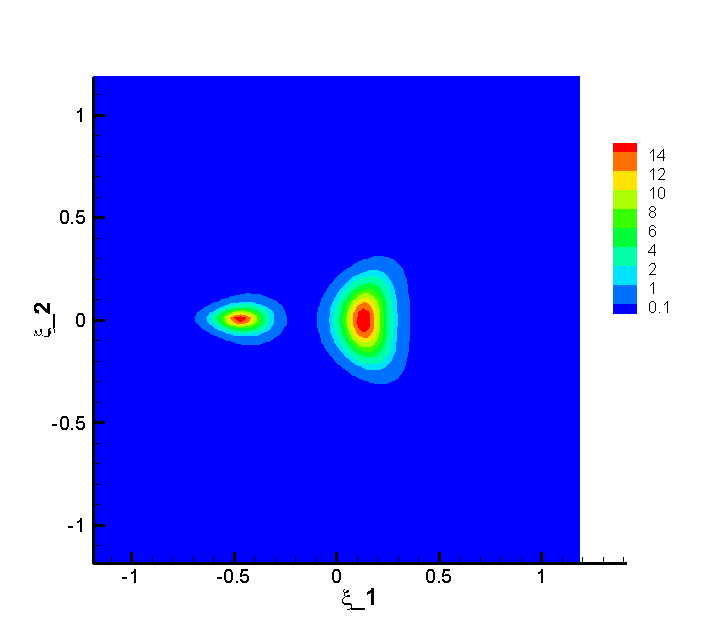}}
                                \subfigure[ Parameter Choice 2. $x_2=0.05 \pi, \,  t=82.$]{\includegraphics[width=2.9in,angle=0]{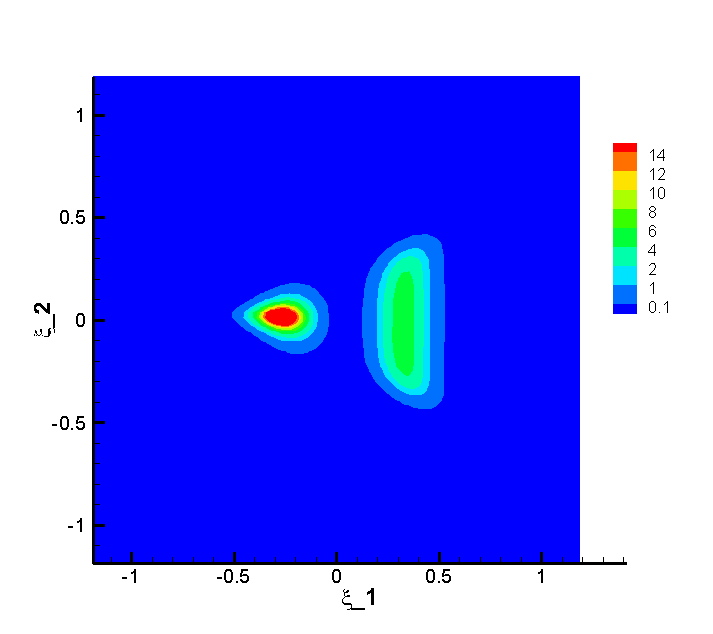}}
       %             \subfigure[ $x_2=4.95 \pi,  \, t=82.$]{\includegraphics[width=3in,angle=0]{}}
                            \subfigure[ Parameter Choice 1. $x_2=0.05 \pi,  \, t=125.$]{\includegraphics[width=2.9in,angle=0]{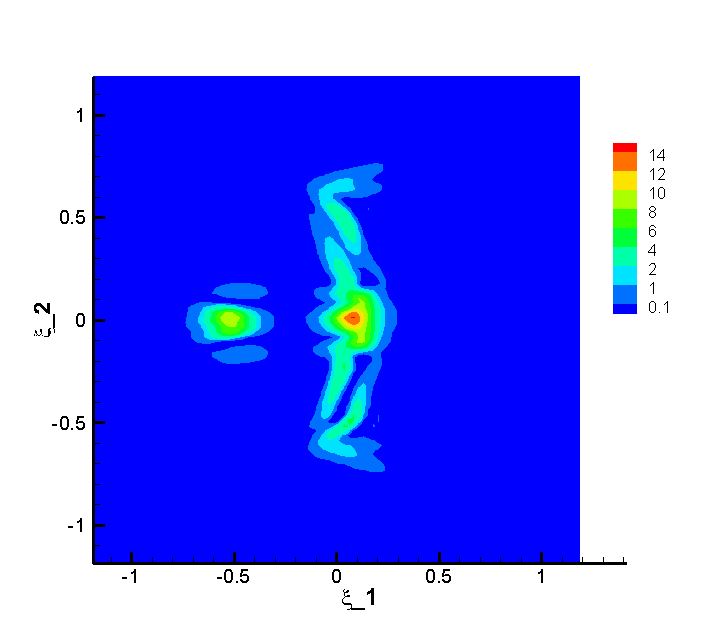}}
                                                        \subfigure[ Parameter Choice 2. $x_2=0.05 \pi,  \, t=125.$]{\includegraphics[width=2.9in,angle=0]{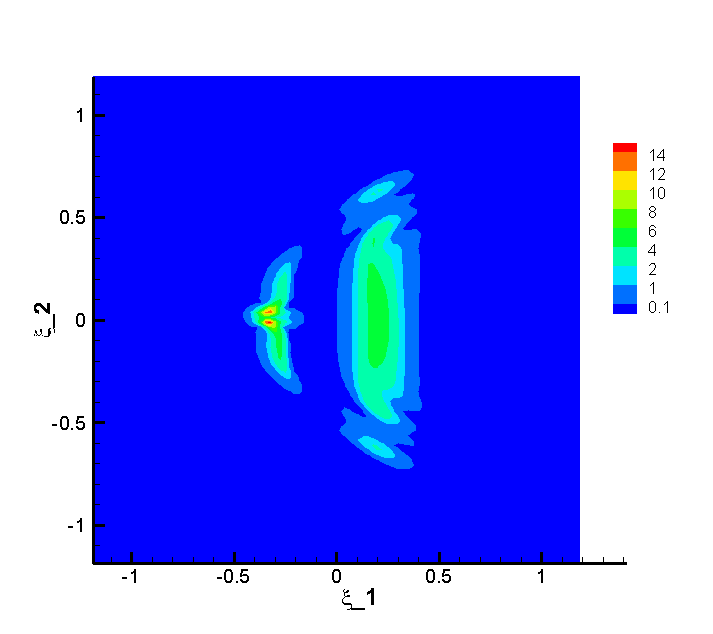}}
         %   \subfigure[ $x_2=4.95 \pi,  \, t=125.$]{\includegraphics[width=3in,angle=0]{}}
  \end{center}
  \caption{2D contour plots of the computed distribution function $f_h$ for the streaming  Weibel instability. The mesh is $100^3$ with piecewise quadratic polynomials. The upwind flux is applied. }
\label{contour1}
\end{figure}

\begin{comment}
\begin{figure}[htb]
  \begin{center}
    \subfigure[ $x_2=0.05 \pi, \, t=100.$]{\includegraphics[width=3in,angle=0]{}}
    \subfigure[ $x_2=4.95 \pi,  \, t=100.$]{\includegraphics[width=3in,angle=0]{}}
        \subfigure[ $x_2=0.05 \pi,  \, t=125.$]{\includegraphics[width=3in,angle=0]{run1_c_x1_t125.png}}
            \subfigure[ $x_2=4.95 \pi,  \, t=125.$]{\includegraphics[width=3in,angle=0]{run1_c_x50_t125.png}}
                \subfigure[ $x_2=0.05 \pi, \,  t=175.$]{\includegraphics[width=3in,angle=0]{}}
                    \subfigure[ $x_2=4.95 \pi,  \, t=175.$]{\includegraphics[width=3in,angle=0]{}}
  \end{center}
  \caption{2D contour plots of the computed distribution function $f_h$ for the streaming Weibel instability, with parameter choice 1 as in \cite{califano1998ksw} ($\delta=0.5, v_{0,1}=v_{0,2}=0.3, k_0=0.2$), at selected location $x_2$ and time $t$. The mesh is $100^3$ with piecewise quadratic polynomials. The upwind flux is applied. }
\label{contour1c}
\end{figure}
\end{comment}

%%%%%%%%% choice 2 %%%%%%%%%%%%%%%

For choice 2,  with the  nonsymmetric  parameter set, the results are included in Figures \ref{befield1}, \ref{keeme1},  \ref{density1}, \ref{logfm1},  and \ref{contour1},  juxtaposed  with those for  parameter choice 1.  
%Again we  observe relatively larger error in the total energy when the upwind flux is used to solve  Maxwell's equations.  But overall, both mass and total energy are very well preserved.  
Insofar as we can make comparison with \cite{califano1998ksw}, our results are in reasonable agreement. Similar energy transfers take place, but the equipartition of the magnetic and electric energies at the peak is not achieved.  All modes saturate now at nearly the same values, evidently resulting from the broken symmetry.  Also, at long times, contours of  the distribution function are displayed.  Here the wrapping of the distribution function as two intertwined distorted  cylinders is observed  as in  \cite{califano1998ksw}, although for late times there is a loss of localization.

\section{Concluding Remarks}
\label{conclu}
In summary, we have  developed discontinuous Galerkin methods  for solving  the Vlasov-Maxwell system.  We have proven that the method is arbitrarily accurate, conserves charge,  can conserve energy, and  is stable.    Error estimates were  established for  several flux choices.  The scheme was tested on the streaming Weibel instability, where  the  order of accuracy  and  conservation properties  were  verified.
In the future, we will explore other time stepping methods to improve the efficiency of the overall algorithm.  In our development, the constraint equations of (\ref{eq:max:4})  were not considered;   in the future, we plan to investigate them together with some correction techniques for the continuity equation. The proposed method has been clearly established as sufficient for investigating the streaming Weibel instability, and the long time nonlinear physics of this system can be further investigated and modeled.  In the future,  we will also apply the method to  study other important plasma physics problems, especially those of higher dimension.

%\fi

\section*{Acknowledgments}
Y.C. is supported by grant NSF DMS-1217563,  I.M.G. is supported by grant NSF DMS-1109625,  and  F.L. is partially supported by NSF CAREER award DMS-0847241 and an Alfred P. Sloan Research Fellowship.  P.J.M is supported by the US Department of Energy, grant DE-FG02-04ER54742;  He would like to thank  F. Pegoraro for  helpful correspondence.  Also, support from Department of Mathematics at Michigan State University and the Institute of Computational Engineering and Sciences at the University of Texas Austin are gratefully acknowledged.

\bibliographystyle{abbrv}
\bibliography{refer,ref_cheng_plasma_2}

\end{document}